 \tikzset{help lines/.style={step=#1cm,very thin, color=gray},
help lines/.default=.5} 
\tikzset{thick grid/.style={step=#1cm,thick, color=gray},
thick grid/.default=1} 
\newtheorem{thm}{Theorem}[subsection]
\newtheorem{lem}[thm]{Lemma}
\newtheorem{cor}[thm]{Corollary}
\theoremstyle{definition}
\newtheorem{defn}[thm]{Definition}
\newtheorem{eg}[thm]{Example}
\theoremstyle{remark}
\newtheorem{rem}[thm]{Remark}
\numberwithin{equation}{section}
\newcommand{\mat}[1]{\ensuremath{
\left[\begin{matrix}#1
\end{matrix}\right]
}}
\newcommand{\vs}[1]{\vskip .#1 cm} 
 \newcommand{\onto}{\twoheadrightarrow}
 \newcommand{\cof}{\rightarrowtail}
\newcommand{\smallcoprod}{\,{\textstyle{\coprod}}\,}
\DeclareMathOperator{\coker}{coker}
\DeclareMathOperator{\Hom}{Hom}%
\DeclareMathOperator{\Ext}{Ext}%
\DeclareMathOperator{\End}{End}%
\DeclareMathOperator{\add}{add} 
\DeclareMathOperator{\undim}{\underline{dim}}
\newcommand{\field}[1]{\mathbb{#1}}
\newcommand{\ZZ}{\ensuremath{{\field{Z}}}}
\newcommand{\RR}{\ensuremath{{\field{R}}}}
\newcommand{\NNN}{\ensuremath{{\field{N}}}}
\newcommand{\commentout}[1]{}
\newcommand{\cA}{\ensuremath{{\mathcal{A}}}}
\newcommand{\cC}{\ensuremath{{\mathcal{C}}}}
\newcommand{\cD}{\ensuremath{{\mathcal{D}}}}
\newcommand{\cF}{\ensuremath{{\mathcal{F}}}}
\newcommand{\cG}{\ensuremath{{\mathcal{G}}}}
\newcommand{\cH}{\ensuremath{{\mathcal{H}}}}
\newcommand{\cP}{\ensuremath{{\mathcal{P}}}}
\newcommand{\cQ}{\ensuremath{{\mathcal{Q}}}}
\newcommand{\maskA}[1]{\draw[thin,fill,color=#1] (-1.5,0.5) circle[radius=2.8cm];}
\newcommand{\maskB}[1]{\draw[thin,fill,color=#1] (1.5,0.5) circle[radius=2.8cm];}
\newcommand{\maskC}[1]{\draw[thin,fill,color=#1] (0,-2.5) circle[radius=2.8cm];}
\newcommand{\maskCC}[1]{
	\begin{scope}
		\clip (0,-2.5) circle[radius=2.8cm];
		\draw[thin,fill,color=#1] 
		(-1.5,0.5) circle[radius=2.8cm];
	\end{scope}
}
\newcommand{\maskZ}[1]{\draw[thin,fill,color=#1] (0,-.42) circle[radius=2.43cm];}
\newcommand{\maskX}[1]{
	\begin{scope}[rotate=-63,xshift=.55cm]
		\draw[thin,fill,color=#1] (0,-1.14) ellipse[x 			radius=3cm,y radius=2.235cm];
	\end{scope}
}
\newcommand{\maskY}[1]{ 
	\begin{scope}[rotate=63,xshift=-.55cm]
		\draw[thin,fill,color=#1] (0,-1.14) ellipse[x 			radius=3cm,y radius=2.235cm];
	\end{scope}
}
\newcommand{\maskAa}[1]{
\draw[thin,fill,color=#1] (-3.5,0) rectangle (0,-3);
}
\newcommand{\maskAb}[1]{
\draw[thin,fill,color=#1] (-3.5,0) rectangle (0,3);
}
\newcommand{\maskAe}[1]{
\draw[thin,fill,color=#1] (3.5,0) rectangle (0,3);
}
\newcommand{\maskAc}[1]{
\begin{scope}
\clip (3.5,0) rectangle (0,-3);
\begin{scope}[rotate=-45]
\draw[thin,fill,color=#1] (4.3,0) rectangle (0,-3);
\end{scope}
\end{scope}
}
\newcommand{\maskAd}[1]{
\begin{scope}
\clip (3.5,0) rectangle (0,-2.5);
\begin{scope}[rotate=45]
\draw[thin,fill,color=#1] (3,0) rectangle (0,-5);
\end{scope}
\end{scope}
}
\newcommand{\VerticalFan}{\begin{scope}[xscale=.7,yscale=.6,rotate=-5]
\coordinate (nI3) at (-1,1.8);
\coordinate (I3) at (1.58,-2.32);
\coordinate (I1) at (-1.58,-2.32);
\coordinate (nI1) at (1,1.8);
\coordinate (S2) at (0,-3.85);
\coordinate (nS2) at (0,1.4);
\coordinate (P2) at (0,-1.85);
\coordinate (nP2) at (0,2.85);
\coordinate (P1) at (-1.25,0);
\coordinate (nP1) at (2.75,-2);
\coordinate (P3) at (1.25,0);
\coordinate (nP3) at (-2.75,-2);
\maskA{gray!27!}
\maskX{gray!40}
\maskB{white}
\maskC{white}
\begin{scope}
\clip (-1,1.8) rectangle (1,2.1);
\draw[very thick,color=black] (0,-.42) circle[radius=2.43cm];
\end{scope}
\begin{scope}[rotate=-63,xshift=.55cm]
\clip (0,-4) rectangle (-3.1,1.5);
\draw[color=black,very thick] (0,-1.14) ellipse[x radius=3cm,y radius=2.235cm];
\end{scope}
\begin{scope}[rotate=63,xshift=-.55cm]
\clip (0,-4) rectangle (3.1,1.5);
\draw[color=black,very thick] (0,-1.14) ellipse[x radius=3cm,y radius=2.235cm];
\end{scope}
    	\draw[very thick] (-1.5,0.5) circle[radius=2.8cm];
	\draw[very thick] (1.5,0.5) circle[radius=2.8cm];
	\draw[very thick] (0,-2.5) circle[radius=2.8cm];
\draw[fill] (P1) ellipse[x radius=4.5pt,y radius=6pt] +(.8,-.3)node{\tiny$P_1[2]$};
\draw[fill] (P2) ellipse[x radius=3pt,y radius=4pt] +(0,-.2)node[below]{\tiny$P_2[2]$};
\draw[fill] (P3) ellipse[x radius=3pt,y radius=4pt] +(0,.2)node[ right]{\tiny$P_3[2]$};
\draw[fill] (nP1) ellipse[x radius=3pt,y radius=4pt] node[below right]{\tiny$P_1[1]$};
\draw[fill] (nP2) ellipse[x radius=3pt,y radius=4pt] +(0,.2)node[above]{\tiny$P_2[1]$};
\draw[fill] (nP3) ellipse[x radius=4.5pt,y radius=6pt] +(-.7,-.3)node{\tiny$P_3[1]$};
\draw[fill] (nI1) ellipse[x radius=3pt,y radius=4pt];
\draw[fill] (nI3) ellipse[x radius=4.5pt,y radius=6pt] +(-.3,.5)node{\tiny $I_3[1]$};
\draw[fill] (nS2) ellipse[x radius=3pt,y radius=4pt] ;
\draw (nI3) +(-1.5,.5)node{$Y$};
\draw (P1) +(-1,.5)node{$X$};
\end{scope}
} 
\newcommand{\allblack}{\begin{scope}[xscale=.7,yscale=.45,rotate=-5]
\coordinate (nI3) at (-1,1.8);
\coordinate (I3) at (1.58,-2.32);
\coordinate (I1) at (-1.58,-2.32);
\coordinate (nI1) at (1,1.8);
\coordinate (S2) at (0,-3.85);
\coordinate (nS2) at (0,1.4);
\coordinate (P2) at (0,-1.85);
\coordinate (nP2) at (0,2.85);
\coordinate (P1) at (-1.25,0);
\coordinate (nP1) at (2.75,-2);
\coordinate (P3) at (1.25,0);
\coordinate (nP3) at (-2.75,-2);
\draw (0,-6.5) node{$A_3\times {\color{blue}0}$};
\begin{scope}
\clip (1.5,0.5) circle[radius=2.8cm];
\clip (-1.5,0.5) circle[radius=2.8cm];
\maskC{gray!40} 
\end{scope}
\begin{scope}
\clip (-2,-2.3) rectangle (2,-3);
\draw[very thick,color=black] (0,-.45) circle[radius=2.43cm];
\end{scope}
\begin{scope}[rotate=-63,xshift=.55cm]
\clip (0,-4) rectangle (3.1,1.5);
\draw[color=black,very thick] (0,-1.14) ellipse[x radius=3cm,y radius=2.235cm];
\end{scope}
\begin{scope}[rotate=63,xshift=-.55cm]
\clip (0,-4) rectangle (-3.1,1.5);
\draw[color=black,very thick] (0,-1.14) ellipse[x radius=3cm,y radius=2.235cm];
\end{scope}
	\draw[very thick] (-1.5,0.5) circle[radius=2.8cm];
	\draw[very thick] (1.5,0.5) circle[radius=2.8cm];
	\draw[very thick] (0,-2.5) circle[radius=2.8cm];
\end{scope}
} 
\newcommand{\allblackB}{\begin{scope}[xscale=.7,yscale=.45,rotate=-5]
\coordinate (nI3) at (-1,1.8);
\coordinate (I3) at (1.58,-2.32);
\coordinate (I1) at (-1.58,-2.32);
\coordinate (nI1) at (1,1.8);
\coordinate (S2) at (0,-3.85);
\coordinate (nS2) at (0,1.4);
\coordinate (P2) at (0,-1.85);
\coordinate (nP2) at (0,2.85);
\coordinate (P1) at (-1.25,0);
\coordinate (nP1) at (2.75,-2);
\coordinate (P3) at (1.25,0);
\coordinate (nP3) at (-2.75,-2);
\begin{scope}
\clip (1.5,0.5) circle[radius=2.8cm];
\clip (-1.5,0.5) circle[radius=2.8cm];
\maskC{gray!50} 
\end{scope}
\begin{scope}
\clip (-2,-2.3) rectangle (2,-3);
\draw[very thick,color=black] (0,-.45) circle[radius=2.43cm];
\end{scope}
\begin{scope}[rotate=-63,xshift=.55cm]
\clip (0,-4) rectangle (3.1,1.5);
\draw[color=black,very thick] (0,-1.14) ellipse[x radius=3cm,y radius=2.235cm];
\end{scope}
\begin{scope}[rotate=63,xshift=-.55cm]
\clip (0,-4) rectangle (-3.1,1.5);
\draw[color=black,very thick] (0,-1.14) ellipse[x radius=3cm,y radius=2.235cm];
\end{scope}
	\draw[very thick] (-1.5,0.5) circle[radius=2.8cm];
	\draw[very thick] (1.5,0.5) circle[radius=2.8cm];
	\draw[very thick] (0,-2.5) circle[radius=2.8cm];
\end{scope}
} 
\newcommand{\allblackC}{\begin{scope}[xscale=.7,yscale=.45,rotate=-5]
\coordinate (nI3) at (-1,1.8);
\coordinate (I3) at (1.58,-2.32);
\coordinate (I1) at (-1.58,-2.32);
\coordinate (nI1) at (1,1.8);
\coordinate (S2) at (0,-3.85);
\coordinate (nS2) at (0,1.4);
\coordinate (P2) at (0,-1.85);
\coordinate (nP2) at (0,2.85);
\coordinate (P1) at (-1.25,0);
\coordinate (nP1) at (2.75,-2);
\coordinate (P3) at (1.25,0);
\coordinate (nP3) at (-2.75,-2);
\begin{scope}
\clip (0,-2.5) circle[radius=2.8cm];
\begin{scope}[rotate=63,xshift=-.55cm]
\draw[thin,fill,color=gray!20!white] (0,-1.14) ellipse[x radius=3cm,y radius=2.235cm];
\end{scope}
\end{scope}
\begin{scope}
\draw[thin,fill,color=white] 
(0,.5) ellipse[x radius=3cm,y radius=2.35cm]
(-1.5,0.5) circle[radius=2.8cm]; 
\end{scope}
\begin{scope}
\clip (0,-3) rectangle (4,3); 
\draw[very thick] 
(0,.5) ellipse[x radius=3cm,y radius=2.35cm]; \end{scope}
\begin{scope}
\clip (.8,-2.2) rectangle (2.35,-1); 
\draw[very thick] (-.5,-.3) ellipse[x radius=3cm,y radius=2.15cm]; 
\end{scope}
\begin{scope}
\clip (1.5,0.5) circle[radius=2.8cm];
\clip (-1.5,0.5) circle[radius=2.8cm];
\end{scope}
\begin{scope}
\clip (0,-4) rectangle (2.5,-1);
\draw[very thick] (0,-1) ellipse[x radius=2.35cm,y radius=2.85cm];
\end{scope}
\begin{scope}
\clip (-1.6,-4) rectangle (2.15,-2);
\draw[very thick] (0.25,-.25) ellipse[x radius=2.5cm,y radius=3cm];
\end{scope}
\begin{scope}
\clip (-1.6,-3.1) rectangle (1.7,-1.5);
\draw[very thick] (-.5,-1.1) ellipse[x radius=2.2cm,y radius=1.4cm];
\end{scope}
\begin{scope}[rotate=-63,xshift=.55cm]
\clip (0,-4) rectangle (3.1,1.5);
\draw[color=black,very thick] (0,-1.14) ellipse[x radius=3cm,y radius=2.235cm];
\end{scope}
\begin{scope}[rotate=63,xshift=-.55cm]
\clip (0,-4) rectangle (-3.1,1.5);
\draw[color=black,very thick] (0,-1.14) ellipse[x radius=3cm,y radius=2.235cm];
\end{scope}
	\draw[very thick] (-1.5,0.5) circle[radius=2.8cm];
	\draw[very thick] (1.5,0.5) circle[radius=2.8cm];
	\draw[very thick] (0,-2.5) circle[radius=2.8cm];
\begin{scope} 
\clip (-1.6,-3.1) rectangle (3,-1);
\draw[very thick,color=red] (0,-.45) circle[radius=2.43cm];
\end{scope}
\draw[very thick,dashed,color=green!70!black,->] (5,0) --(.3,-.8);
\draw[color=green!70!black] (5,0) node[right]{A};
\draw[color=green!70!black] (4.5,2.5) node[right]{B};
\draw[color=green!70!black] (-4.5,2.5) node[left]{C};
\draw[color=green!70!black] (-5,-1.5) node[left]{D};
\draw[color=green!70!black] (-4,-4) node[left]{E};
\draw[very thick,dashed,color=green!70!black,->] (-5,-1.5) --(-.3,-.8);
\draw[very thick,dashed,color=green!70!black,->] (4.5,2.5)..controls (2,2) and (1,1.5)..(.5,-.3);
\draw[very thick,dashed,color=green!70!black,->] (-4.5,2.5)..controls (-2,2) and (-1,1.5)..(-.5,-.3);
\draw[very thick,dashed,color=green!70!black,->] (-4,-4)..controls (-2,-2) and (-1,-1.3)..(-.2,-1.2);
\draw[very thick, color=blue,dotted] (.4,-1.2) -- (3.7,-4.3) node[right]{F};
\end{scope}
} 
\newcommand{\PonePerp}{\begin{scope}[xscale=.7,yscale=.45,rotate=-5]
\coordinate (nI3) at (-1,1.8);
\coordinate (I3) at (1.58,-2.32);
\coordinate (I1) at (-1.58,-2.32);
\coordinate (nI1) at (1,1.8);
\coordinate (S2) at (0,-3.85);
\coordinate (nS2) at (0,1.4);
\coordinate (P2) at (0,-1.85);
\coordinate (nP2) at (0,2.85);
\coordinate (P1) at (-1.25,0);
\coordinate (nP1) at (2.75,-2);
\coordinate (P3) at (1.25,0);
\coordinate (nP3) at (-2.75,-2);
\draw (0.5,-7) node{$P_1^\perp\times {\color{blue}P_1}$};
\begin{scope}
\clip (1.5,0.5) circle[radius=2.8cm];
\maskC{gray!40} 
\maskA{white}
\end{scope}
\begin{scope}[rotate=63,xshift=-.55cm]
\clip (0,-4) rectangle (-3.1,1.5);
\draw[color=black,very thick] (0,-1.14) ellipse[x radius=3cm,y radius=2.235cm];
\end{scope}
	\draw[color=blue,very thick] (-1.5,0.5) circle[radius=2.8cm];
	\draw[very thick] (1.5,0.5) circle[radius=2.8cm];
	\draw[very thick] (0,-2.5) circle[radius=2.8cm];
\draw[color=blue,fill] (P1) ellipse[x radius=9pt, y radius=13pt] +(.2,-.3)node[above left]{\tiny$P_1[0]$};
\draw[color=blue,fill] (nP1) ellipse[x radius=9pt, y radius=13pt] +(-.2,.1)node[below right]{\tiny$P_1[1]$};
\end{scope}
} 
\newcommand{\StwoPerp}{\begin{scope}[xscale=.7,yscale=.45,rotate=-5]
\coordinate (nI3) at (-1,1.8);
\coordinate (I3) at (1.58,-2.32);
\coordinate (I1) at (-1.58,-2.32);
\coordinate (nI1) at (1,1.8);
\coordinate (S2) at (0,-3.85);
\coordinate (nS2) at (0,1.4);
\coordinate (P2) at (0,-1.85);
\coordinate (nP2) at (0,2.85);
\coordinate (P1) at (-1.25,0);
\coordinate (nP1) at (2.75,-2);
\coordinate (P3) at (1.25,0);
\coordinate (nP3) at (-2.75,-2);
\draw (0,-6.5) node{$S_2^\perp\times {\color{blue}S_2}$};
\begin{scope}[rotate=-63,xshift=.55cm]
\clip (0,-1.14) ellipse[x radius=3cm,y radius=2.235cm];
\begin{scope}[rotate=63,xshift=-.55cm,rotate=63]
\draw[fill,color=gray!40] (0,-1.14) ellipse[x radius=3cm,y radius=2.235cm];
\end{scope}
\begin{scope}[xshift=-.55cm,rotate=63]
\draw[fill,color=white] (0,-2.5) circle[radius=2.8cm];
\end{scope}
\end{scope}
\begin{scope}[rotate=-63,xshift=.55cm]
\draw[color=black,very thick] (0,-1.14) ellipse[x radius=3cm,y radius=2.235cm];
\end{scope}
\begin{scope}[rotate=63,xshift=-.55cm]
\draw[color=black,very thick] (0,-1.14) ellipse[x radius=3cm,y radius=2.235cm];
\end{scope}
	\draw[very thick, dotted, color=red] (-1.5,0.5) circle[radius=2.8cm];
	\draw[very thick, dotted, color=red] (1.5,0.5) circle[radius=2.8cm];
	\draw[color=blue,very thick] (0,-2.5) circle[radius=2.8cm];
\draw[color=blue,fill] (S2) ellipse[x radius=9pt, y radius=13pt] node[above]{\tiny$S_2[0]$};
\draw[color=white,fill] (nS2)+(-.2,1.8) ellipse[x radius=35pt, y radius=20pt];
\draw[color=blue,fill] (nS2) ellipse[x radius=9pt, y radius=13pt] ;
\draw[color=blue] (nP2) node{\tiny $S_2[1]$};
\end{scope}
} 
\newcommand{\PthreePerp}{\begin{scope}[xscale=.7,yscale=.45,rotate=-5]
\coordinate (nI3) at (-1,1.8);
\coordinate (I3) at (1.58,-2.32);
\coordinate (I1) at (-1.58,-2.32);
\coordinate (nI1) at (1,1.8);
\coordinate (S2) at (0,-3.85);
\coordinate (nS2) at (0,1.4);
\coordinate (P2) at (0,-1.85);
\coordinate (nP2) at (0,2.85);
\coordinate (P1) at (-1.25,0);
\coordinate (nP1) at (2.75,-2);
\coordinate (P3) at (1.25,0);
\coordinate (nP3) at (-2.75,-2);
\draw (0.5,-7) node{$P_3^\perp\times {\color{blue}P_3}$};
\begin{scope}
\clip (-1.5,0.5) circle[radius=2.8cm];
\maskC{gray!40} 
\maskB{white}
\end{scope}
\begin{scope}[rotate=-63,xshift=.55cm]
\clip (0,-4) rectangle (3.1,1.5);
\draw[color=black,very thick] (0,-1.14) ellipse[x radius=3cm,y radius=2.235cm];
\end{scope}
	\draw[very thick] (-1.5,0.5) circle[radius=2.8cm];
	\draw[very thick,color=blue] (1.5,0.5) circle[radius=2.8cm];
	\draw[very thick] (0,-2.5) circle[radius=2.8cm];
\draw[color=blue,fill] (P3) ellipse[x radius=9pt, y radius=13pt] +(0,-.5)node[above right]{\tiny$P_3[0]$};
\draw[color=blue,fill] (nP3) ellipse[x radius=9pt, y radius=13pt] +(0,0.5)node[below left]{\tiny$P_3[1]$};
\end{scope}
} 
\newcommand{\PthreePerpB}{\begin{scope}[xscale=.7,yscale=.45,rotate=-5]
\coordinate (nI3) at (-1,1.8);
\coordinate (I3) at (1.58,-2.32);
\coordinate (I1) at (-1.58,-2.32);
\coordinate (nI1) at (1,1.8);
\coordinate (S2) at (0,-3.85);
\coordinate (nS2) at (0,1.4);
\coordinate (P2) at (0,-1.85);
\coordinate (nP2) at (0,2.85);
\coordinate (P1) at (-1.25,0);
\coordinate (nP1) at (2.75,-2);
\coordinate (P3) at (1.25,0);
\coordinate (nP3) at (-2.75,-2);
\begin{scope}
	\clip (-1.5,0.5) circle[radius=2.8cm];
	\maskC{gray!50} 
	\maskB{white}
\end{scope}
\begin{scope}[rotate=-63,xshift=.55cm]
	\clip (0,-4) rectangle (3.1,1.5);
	\draw[color=black,very thick] (0,-1.14) ellipse[x radius=3cm,y radius=2.235cm];
\end{scope}
	\draw[very thick] (-1.5,0.5) circle[radius=2.8cm];
	\draw[very thick,color=blue] (1.5,0.5) circle[radius=2.8cm];
	\draw[very thick] (0,-2.5) circle[radius=2.8cm];
\draw[color=blue,fill] (P3) ellipse[x radius=6pt, y radius=9pt] +(0,-.5)node[above right]{\tiny$P_3[0]$};
\draw[color=blue,fill] (nP3) ellipse[x radius=6pt, y radius=9pt] +(0,0.5)node[below left]{\tiny$P_3[1]$};
\end{scope}
} 
\newcommand{\IonePerp}{\begin{scope}[xscale=.7,yscale=.45,rotate=-5]
\coordinate (nI3) at (-1,1.8);
\coordinate (I3) at (1.58,-2.32);
\coordinate (I1) at (-1.58,-2.32);
\coordinate (nI1) at (1,1.8);
\coordinate (S2) at (0,-3.85);
\coordinate (nS2) at (0,1.4);
\coordinate (P2) at (0,-1.85);
\coordinate (nP2) at (0,2.85);
\coordinate (P1) at (-1.25,0);
\coordinate (nP1) at (2.75,-2);
\coordinate (P3) at (1.25,0);
\coordinate (nP3) at (-2.75,-2);
\draw (0.5,-7) node{$I_1^\perp\times {\color{blue}I_1}$};
\begin{scope}
\clip (-1.5,0.5) circle[radius=2.8cm];
\maskY{gray!40} 
\maskX{white}
\end{scope}
\begin{scope}
\clip (-2,-2.3) rectangle (2,-3);
\draw[very thick,color=black] (0,-.45) circle[radius=2.43cm];
\end{scope}
\begin{scope}
\clip (1,2) rectangle (3,-2.31);
\draw[very thick,color=black] (0,-.42) circle[radius=2.44cm];
\end{scope}
\begin{scope}[rotate=-63,xshift=.55cm]
\draw[color=blue,very thick] (0,-1.14) ellipse[x radius=3cm,y radius=2.235cm];
\end{scope}
\begin{scope}[rotate=63,xshift=-.55cm]
\draw[color=black,very thick] (0,-1.14) ellipse[x radius=3cm,y radius=2.235cm];
\end{scope}
	\draw[very thick, dotted, color=red] (1.5,0.5) circle[radius=2.8cm];
	\draw[very thick] (-1.5,0.5) circle[radius=2.8cm];
	\draw[very thick, dotted, color=red] (0,-2.5) circle[radius=2.8cm];
\draw[color=blue,fill] (I1) ellipse[x radius=9pt, y radius=13pt] node[below left]{\tiny$I_1[0]$};
\draw[color=blue,fill] (nI1) ellipse[x radius=9pt, y radius=13pt] ;\end{scope}
} 
\newcommand{\IthreePerp}{\begin{scope}[xscale=.7,yscale=.45,rotate=-5]
\coordinate (nI3) at (-1,1.8);
\coordinate (I3) at (1.58,-2.32);
\coordinate (I1) at (-1.58,-2.32);
\coordinate (nI1) at (1,1.8);
\coordinate (S2) at (0,-3.85);
\coordinate (nS2) at (0,1.4);
\coordinate (P2) at (0,-1.85);
\coordinate (nP2) at (0,2.85);
\coordinate (P1) at (-1.25,0);
\coordinate (nP1) at (2.75,-2);
\coordinate (P3) at (1.25,0);
\coordinate (nP3) at (-2.75,-2);
\draw (0.5,-7) node{$I_3^\perp\times {\color{blue}I_3}$};
\begin{scope}
\clip (1.5,0.5) circle[radius=2.8cm];
\maskX{gray!40} 
\maskY{white}
\end{scope}
\begin{scope}
\clip (-2,-2.3) rectangle (2,-3);
\draw[very thick,color=black] (0,-.45) circle[radius=2.43cm];
\end{scope}
\begin{scope}
\clip (-1,2) rectangle (-3,-2.31);
\draw[very thick,color=black] (0,-.42) circle[radius=2.44cm];
\end{scope}
\begin{scope}[rotate=-63,xshift=.55cm]
\draw[color=black,very thick] (0,-1.14) ellipse[x radius=3cm,y radius=2.235cm];
\end{scope}
\begin{scope}[rotate=63,xshift=-.55cm]
\draw[color=blue,very thick] (0,-1.14) ellipse[x radius=3cm,y radius=2.235cm];
\end{scope}
	\draw[very thick, dotted, color=red] (-1.5,0.5) circle[radius=2.8cm];
	\draw[very thick] (1.5,0.5) circle[radius=2.8cm];
	\draw[very thick, dotted, color=red] (0,-2.5) circle[radius=2.8cm];
\draw[color=blue,fill] (I3) ellipse[x radius=9pt, y radius=13pt] node[below right]{\tiny$I_3[0]$};
\draw[color=blue,fill] (nI3) ellipse[x radius=9pt, y radius=13pt] ;\end{scope}
} 
\newcommand{\Ione}{\begin{scope}[xscale=.7,yscale=.45,rotate=-5]
\coordinate (nI3) at (-1,1.8);
\coordinate (I3) at (1.58,-2.32);
\coordinate (I1) at (-1.58,-2.32);
\coordinate (nI1) at (1,1.8);
\coordinate (S2) at (0,-3.85);
\coordinate (nS2) at (0,1.4);
\coordinate (P2) at (0,-1.85);
\coordinate (nP2) at (0,2.85);
\coordinate (P1) at (-1.25,0);
\coordinate (nP1) at (2.75,-2);
\coordinate (P3) at (1.25,0);
\coordinate (nP3) at (-2.75,-2);
\draw (0.5,-6.5) node{$I_1\times {\color{blue}\,^\perp I_1}$};
\maskX{gray!40} 
\maskB{white}
\maskC{white}
\begin{scope}[rotate=-63,xshift=.55cm]
\draw[color=black,very thick] (0,-1.14) ellipse[x radius=3cm,y radius=2.235cm];
\end{scope}
\begin{scope}[rotate=63,xshift=-.55cm]
\clip (0,-4) rectangle (-3.1,1.5);
\draw[color=blue,very thick] (0,-1.14) ellipse[x radius=3cm,y radius=2.235cm];
\end{scope}
	\draw[color=blue,very thick] (1.5,0.5) circle[radius=2.8cm];
	\draw[very thick, dotted, color=red] (-1.5,0.5) circle[radius=2.8cm];
	\draw[color=blue,very thick] (0,-2.5) circle[radius=2.8cm];
\draw[fill] (P1) ellipse[x radius=9pt, y radius=13pt];
\draw[fill] (nP1) ellipse[x radius=9pt, y radius=13pt] node[below right]{\tiny$P_1[3]$};
\end{scope}
} 
\newcommand{\IoneB}{\begin{scope}[xscale=.7,yscale=.45,rotate=-5]
\coordinate (nI3) at (-1,1.8);
\coordinate (I3) at (1.58,-2.32);
\coordinate (I1) at (-1.58,-2.32);
\coordinate (nI1) at (1,1.8);
\coordinate (S2) at (0,-3.85);
\coordinate (nS2) at (0,1.4);
\coordinate (P2) at (0,-1.85);
\coordinate (nP2) at (0,2.85);
\coordinate (P1) at (-1.25,0);
\coordinate (nP1) at (2.75,-2);
\coordinate (P3) at (1.25,0);
\coordinate (nP3) at (-2.75,-2);
\maskX{gray!35} 
\maskB{white}
\maskC{white}
\begin{scope}[rotate=-63,xshift=.55cm]
\draw[color=black,very thick] (0,-1.14) ellipse[x radius=3cm,y radius=2.235cm];
\end{scope}
\begin{scope}[rotate=63,xshift=-.55cm]
\clip (0,-4) rectangle (-3.1,1.5);
\draw[color=blue,very thick] (0,-1.14) ellipse[x radius=3cm,y radius=2.235cm];
\end{scope}
	\draw[color=blue,very thick] (1.5,0.5) circle[radius=2.8cm];
	\draw[very thick, dotted, color=red] (-1.5,0.5) circle[radius=2.8cm];
	\draw[color=blue,very thick] (0,-2.5) circle[radius=2.8cm];
\draw[fill] (P1) ellipse[x radius=6pt, y radius=9pt];
\draw[fill] (nP1) ellipse[x radius=6pt, y radius=9pt] node[below right]{\tiny$P_1[3]$}; 
\end{scope}
} 
\newcommand{\Ithree}{\begin{scope}[xscale=.7,yscale=.45,rotate=-5]
\coordinate (nI3) at (-1,1.8);
\coordinate (I3) at (1.58,-2.32);
\coordinate (I1) at (-1.58,-2.32);
\coordinate (nI1) at (1,1.8);
\coordinate (S2) at (0,-3.85);
\coordinate (nS2) at (0,1.4);
\coordinate (P2) at (0,-1.85);
\coordinate (nP2) at (0,2.85);
\coordinate (P1) at (-1.25,0);
\coordinate (nP1) at (2.75,-2);
\coordinate (P3) at (1.25,0);
\coordinate (nP3) at (-2.75,-2);
\draw (0.5,-6.5) node{$I_3\times {\color{blue}\,^\perp I_3}$};
\maskY{gray!40} 
\maskA{white}
\maskC{white}
\begin{scope}[rotate=-63,xshift=.55cm]
\clip (0,-4) rectangle (3.1,1.5);
\draw[color=blue,very thick] (0,-1.14) ellipse[x radius=3cm,y radius=2.235cm];
\end{scope}
\begin{scope}[rotate=63,xshift=-.55cm]
\draw[color=black,very thick] (0,-1.14) ellipse[x radius=3cm,y radius=2.235cm];
\end{scope}
	\draw[very thick,color=blue] (-1.5,0.5) circle[radius=2.8cm];
	\draw[very thick, dotted, color=red] (1.5,0.5) circle[radius=2.8cm];
	\draw[very thick,color=blue] (0,-2.5) circle[radius=2.8cm];
\draw[fill] (P3) ellipse[x radius=9pt, y radius=13pt] ;
\draw[fill] (nP3) ellipse[x radius=9pt, y radius=13pt] +(0,.2)node[below left]{\tiny$P_3[3]$};
\end{scope}
} 
\newcommand{\Ptwo}{\begin{scope}[xscale=.7,yscale=.45,rotate=-5]
\coordinate (nI3) at (-1,1.8);
\coordinate (I3) at (1.58,-2.32);
\coordinate (I1) at (-1.58,-2.32);
\coordinate (nI1) at (1,1.8);
\coordinate (S2) at (0,-3.85);
\coordinate (nS2) at (0,1.4);
\coordinate (P2) at (0,-1.85);
\coordinate (nP2) at (0,2.85);
\coordinate (P1) at (-1.25,0);
\coordinate (nP1) at (2.75,-2);
\coordinate (P3) at (1.25,0);
\coordinate (nP3) at (-2.75,-2);
\draw (0.5,-7.2) node{$P_2\times {\color{blue}\,^\perp P_2}$};
\maskZ{gray!40} 
\maskY{white}
\maskX{white}
\begin{scope}
\draw[very thick,color=black] (0,-.45) circle[radius=2.43cm];
\end{scope}
\begin{scope}[rotate=-63,xshift=.55cm]
\clip (0,-4) rectangle (3.1,1.5);
\draw[color=blue,very thick] (0,-1.14) ellipse[x radius=3cm,y radius=2.235cm];
\end{scope}
\begin{scope}[rotate=-63,xshift=.55cm]
\clip (0,-4) rectangle (-3.1,1.5);
\draw[color=blue,very thick] (0,-1.14) ellipse[x radius=3cm,y radius=2.235cm];
\end{scope}
\begin{scope}[rotate=63,xshift=-.55cm]
\clip (0,-4) rectangle (-3.1,1.5);
\draw[color=blue,very thick] (0,-1.14) ellipse[x radius=3cm,y radius=2.235cm];
\end{scope}
\begin{scope}[rotate=63,xshift=-.55cm]
\clip (0,-4) rectangle (3.1,1.5);
\draw[color=blue,very thick] (0,-1.14) ellipse[x radius=3cm,y radius=2.235cm];
\end{scope}
	\draw[very thick, dotted, color=red] (-1.5,0.5) circle[radius=2.8cm] (1.5,0.5) circle[radius=2.8cm] (0,-2.5) circle[radius=2.8cm];
\draw[fill] (S2) ellipse[x radius=9pt, y radius=13pt] node[below]{\tiny$S_2[2]$};
\draw[fill] (nS2) ellipse[x radius=9pt, y radius=13pt];
\end{scope}
} 
\newcommand{\PtwoPerp}{\begin{scope}[xscale=.7,yscale=.45,rotate=-5]
\coordinate (nI3) at (-1,1.8);
\coordinate (I3) at (1.58,-2.32);
\coordinate (I1) at (-1.58,-2.32);
\coordinate (nI1) at (1,1.8);
\coordinate (S2) at (0,-3.85);
\coordinate (nS2) at (0,1.4);
\coordinate (P2) at (0,-1.85);
\coordinate (nP2) at (0,2.85);
\coordinate (P1) at (-1.25,0);
\coordinate (nP1) at (2.75,-2);
\coordinate (P3) at (1.25,0);
\coordinate (nP3) at (-2.75,-2);
\draw (0.3,-6.5) node{$P_2^\perp\times {\color{blue} P_2}$};
\begin{scope}
\clip (-1.5,0.5) circle[radius=2.8cm];
\maskB{gray!40} 
\maskZ{white}
\end{scope}
\begin{scope}
\draw[very thick,color=blue] (0,-.45) circle[radius=2.43cm];
\end{scope}
	\draw[very thick] (-1.5,0.5) circle[radius=2.8cm];
	\draw[very thick] (1.5,0.5) circle[radius=2.8cm];
	\draw[very thick, dotted, color=red] (0,-2.5) circle[radius=2.8cm];
\draw[color=blue,fill] (nP2) ellipse[x radius=9pt, y radius=13pt] ;\draw[color=blue,fill] (P2) ellipse[x radius=9pt, y radius=13pt] +(0,-.4)node[below]{\tiny$P_2[0]$};
\end{scope}
} 
\newcommand{\Pthree}{\begin{scope}[xscale=.7,yscale=.45,rotate=-5]
\coordinate (nI3) at (-1,1.8);
\coordinate (I3) at (1.58,-2.32);
\coordinate (I1) at (-1.58,-2.32);
\coordinate (nI1) at (1,1.8);
\coordinate (S2) at (0,-3.85);
\coordinate (nS2) at (0,1.4);
\coordinate (P2) at (0,-1.85);
\coordinate (nP2) at (0,2.85);
\coordinate (P1) at (-1.25,0);
\coordinate (nP1) at (2.75,-2);
\coordinate (P3) at (1.25,0);
\coordinate (nP3) at (-2.75,-2);
\draw (0.3,-6.7) node{$P_3\times {\color{blue}\,^\perp P_3}$};
\maskB{gray!40} 
\maskA{white}
\maskY{white}
\begin{scope}
\clip (-2,-2.3) rectangle (2,-3);
\draw[very thick,color=blue] (0,-.45) circle[radius=2.43cm];
\end{scope}
\begin{scope}
\clip (1,2) rectangle (3,-2.31);
\draw[very thick,color=blue] (0,-.42) circle[radius=2.44cm];
\end{scope}
\begin{scope}[rotate=63,xshift=-.55cm]
\clip (0,-4) rectangle (-3.1,1.5);
\draw[color=blue,very thick] (0,-1.14) ellipse[x radius=3cm,y radius=2.235cm];
\end{scope}
\begin{scope}[rotate=63,xshift=-.55cm]
\clip (0,-4) rectangle (3.1,1.5);
\draw[color=blue,very thick] (0,-1.14) ellipse[x radius=3cm,y radius=2.235cm];
\end{scope}
	\draw[very thick,color=blue] (-1.5,0.5) circle[radius=2.8cm];
	\draw[very thick] (1.5,0.5) circle[radius=2.8cm];
	\draw[very thick, dotted, color=red] (0,-2.5) circle[radius=2.8cm];
\draw[fill] (I1) ellipse[x radius=9pt, y radius=13pt] +(.4,0)node[below left]{\tiny$I_1[2]$};
\draw[fill] (nI1) ellipse[x radius=9pt, y radius=13pt] ;
\end{scope}
} 
\newcommand{\Pone}{\begin{scope}[xscale=.7,yscale=.45,rotate=-5]
\coordinate (nI3) at (-1,1.8);
\coordinate (I3) at (1.58,-2.32);
\coordinate (I1) at (-1.58,-2.32);
\coordinate (nI1) at (1,1.8);
\coordinate (S2) at (0,-3.85);
\coordinate (nS2) at (0,1.4);
\coordinate (P2) at (0,-1.85);
\coordinate (nP2) at (0,2.85);
\coordinate (P1) at (-1.25,0);
\coordinate (nP1) at (2.75,-2);
\coordinate (P3) at (1.25,0);
\coordinate (nP3) at (-2.75,-2);
\draw (1,-6.5) node{$P_1\times {\color{blue}\,^\perp P_1}$};
\maskA{gray!40} 
\maskB{white}
\maskX{white}
\begin{scope}
\clip (-2,-2.3) rectangle (2,-3);
\draw[very thick,color=blue] (0,-.45) circle[radius=2.43cm];
\end{scope}
\begin{scope}
\clip (-1,2) rectangle (-3,-2.31);
\draw[very thick,color=blue] (0,-.42) circle[radius=2.44cm];
\end{scope}
\begin{scope}[rotate=-63,xshift=.55cm]
\draw[color=blue,very thick] (0,-1.14) ellipse[x radius=3cm,y radius=2.235cm];
\end{scope}%
	\draw[very thick] (-1.5,0.5) circle[radius=2.8cm];
	\draw[very thick,color=blue] (1.5,0.5) circle[radius=2.8cm];
	\draw[very thick, dotted, color=red] (0,-2.5) circle[radius=2.8cm];
\draw[fill] (I3) ellipse[x radius=9pt, y radius=13pt] +(-.5,0.1)node[below right]{\tiny$I_3[2]$};
\draw[fill] (nI3) ellipse[x radius=9pt, y radius=13pt];
\end{scope}
} 
\newcommand{\PoneB}{\begin{scope}[xscale=.7,yscale=.45,rotate=-5]
\coordinate (nI3) at (-1,1.8);
\coordinate (I3) at (1.58,-2.32);
\coordinate (I1) at (-1.58,-2.32);
\coordinate (nI1) at (1,1.8);
\coordinate (S2) at (0,-3.85);
\coordinate (nS2) at (0,1.4);
\coordinate (P2) at (0,-1.85);
\coordinate (nP2) at (0,2.85);
\coordinate (P1) at (-1.25,0);
\coordinate (nP1) at (2.75,-2);
\coordinate (P3) at (1.25,0);
\coordinate (nP3) at (-2.75,-2);
\maskA{gray!45} 
\maskB{white}
\maskX{white}
\begin{scope}
\clip (-2,-2.3) rectangle (2,-3);
\draw[very thick,color=blue] (0,-.45) circle[radius=2.43cm];
\end{scope}
\begin{scope}
\clip (-1,2) rectangle (-3,-2.31);
\draw[very thick,color=blue] (0,-.42) circle[radius=2.44cm];
\end{scope}
\begin{scope}[rotate=-63,xshift=.55cm]
\draw[color=blue,very thick] (0,-1.14) ellipse[x radius=3cm,y radius=2.235cm];
\end{scope}%
	\draw[very thick] (-1.5,0.5) circle[radius=2.8cm];
	\draw[very thick,color=blue] (1.5,0.5) circle[radius=2.8cm];
	\draw[very thick, dotted, color=red] (0,-2.5) circle[radius=2.8cm];
\draw[fill] (I3) ellipse[x radius=6pt, y radius=9pt] +(-.5,0.1)node[below right]{\tiny$I_3[2]$}; 
\draw[fill] (nI3) ellipse[x radius=6pt, y radius=9pt];
\end{scope}
} 
\newcommand{\Stwo}{\begin{scope}[xscale=.7,yscale=.45,rotate=-5]
\coordinate (nI3) at (-1,1.8);
\coordinate (I3) at (1.58,-2.32);
\coordinate (I1) at (-1.58,-2.32);
\coordinate (nI1) at (1,1.8);
\coordinate (S2) at (0,-3.85);
\coordinate (nS2) at (0,1.4);
\coordinate (P2) at (0,-1.85);
\coordinate (nP2) at (0,2.85);
\coordinate (P1) at (-1.25,0);
\coordinate (nP1) at (2.75,-2);
\coordinate (P3) at (1.25,0);
\coordinate (nP3) at (-2.75,-2);
\draw (0.5,-7) node{$S_2\times {\color{blue}\,^\perp S_2}$};
\maskC{gray!40} 
\maskA{white}
\maskB{white}
	\draw[very thick, color=blue] (-1.5,0.5) circle[radius=2.8cm];
	\draw[very thick, color=blue] (1.5,0.5) circle[radius=2.8cm];
	\draw[very thick] (0,-2.5) circle[radius=2.8cm];
\draw[fill] (P2) ellipse[x radius=9pt, y radius=13pt] node[below]{\tiny$P_2[2]$};
\draw[fill] (nP2) ellipse[x radius=9pt, y radius=13pt];
\end{scope}
} 
\newcommand{\allblue}{\begin{scope}[xscale=.7,yscale=.45,rotate=-5]
\coordinate (nI3) at (-1,1.8);
\coordinate (I3) at (1.58,-2.32);
\coordinate (I1) at (-1.58,-2.32);
\coordinate (nI1) at (1,1.8);
\coordinate (S2) at (0,-3.85);
\coordinate (nS2) at (0,1.4);
\coordinate (P2) at (0,-1.85);
\coordinate (nP2) at (0,2.85);
\coordinate (P1) at (-1.25,0);
\coordinate (nP1) at (2.75,-2);
\coordinate (P3) at (1.25,0);
\coordinate (nP3) at (-2.75,-2);
\draw[fill,color=gray!40] (-5,-6) rectangle (5,4);
\maskA{white}
\maskB{white}
\maskC{white}
\draw (0.7,-6.5) node{$0\times {\color{blue}A_3}$};
\begin{scope}
\clip (-2,-2.3) rectangle (2,-3);
\draw[very thick,color=blue] (0,-.45) circle[radius=2.43cm];
\end{scope}
\begin{scope}[rotate=-63,xshift=.55cm]
\clip (0,-4) rectangle (3.1,1.5);
\draw[color=blue,very thick] (0,-1.14) ellipse[x radius=3cm,y radius=2.235cm];
\end{scope}
\begin{scope}[rotate=63,xshift=-.55cm]
\clip (0,-4) rectangle (-3.1,1.5);
\draw[color=blue,very thick] (0,-1.14) ellipse[x radius=3cm,y radius=2.235cm];
\end{scope}
	\draw[color=blue,very thick] (-1.5,0.5) circle[radius=2.8cm];
	\draw[color=blue,very thick] (1.5,0.5) circle[radius=2.8cm];
	\draw[color=blue,very thick] (0,-2.5) circle[radius=2.8cm];
\end{scope}
} 
\newcommand{\allblueB}{\begin{scope}[xscale=.7,yscale=.45,rotate=-5]
\coordinate (nI3) at (-1,1.8);
\coordinate (I3) at (1.58,-2.32);
\coordinate (I1) at (-1.58,-2.32);
\coordinate (nI1) at (1,1.8);
\coordinate (S2) at (0,-3.85);
\coordinate (nS2) at (0,1.4);
\coordinate (P2) at (0,-1.85);
\coordinate (nP2) at (0,2.85);
\coordinate (P1) at (-1.25,0);
\coordinate (nP1) at (2.75,-2);
\coordinate (P3) at (1.25,0);
\coordinate (nP3) at (-2.75,-2);
\draw[fill,color=gray!35] (-5,-6) rectangle (5,4);
\maskA{white}
\maskB{white}
\maskC{white}
\begin{scope}
\clip (-2,-2.3) rectangle (2,-3);
\draw[very thick,color=blue] (0,-.45) circle[radius=2.43cm];
\end{scope}
\begin{scope}[rotate=-63,xshift=.55cm]
\clip (0,-4) rectangle (3.1,1.5);
\draw[color=blue,very thick] (0,-1.14) ellipse[x radius=3cm,y radius=2.235cm];
\end{scope}
\begin{scope}[rotate=63,xshift=-.55cm]
\clip (0,-4) rectangle (-3.1,1.5);
\draw[color=blue,very thick] (0,-1.14) ellipse[x radius=3cm,y radius=2.235cm];
\end{scope}
	\draw[color=blue,very thick] (-1.5,0.5) circle[radius=2.8cm];
	\draw[color=blue,very thick] (1.5,0.5) circle[radius=2.8cm];
	\draw[color=blue,very thick] (0,-2.5) circle[radius=2.8cm];
\end{scope}
} 
\newcommand{\IoneBig}{\begin{scope}[xscale=.7,yscale=.5,rotate=-5]
\coordinate (nI3) at (-1,1.8);
\coordinate (I3) at (1.58,-2.32);
\coordinate (I1) at (-1.58,-2.32);
\coordinate (nI1) at (1,1.8);
\coordinate (S2) at (0,-3.85);
\coordinate (nS2) at (0,1.4);
\coordinate (P2) at (0,-1.85);
\coordinate (nP2) at (0,2.85);
\coordinate (P1) at (-1.25,0);
\coordinate (nP1) at (2.75,-2);
\coordinate (P3) at (1.25,0);
\coordinate (nP3) at (-2.75,-2);
\draw (0.5,-6.5) node{$I_1\times {\color{blue}\,^\perp I_1}$};
\maskX{gray!40} 
\maskB{white}
\maskC{white}
\begin{scope}[rotate=-63,xshift=.55cm]
\draw[color=black,very thick] (0,-1.14) ellipse[x radius=3cm,y radius=2.235cm];
\end{scope}
\begin{scope}[rotate=63,xshift=-.55cm]
\clip (0,-4) rectangle (-3.1,1.5);
\draw[color=blue,very thick] (0,-1.14) ellipse[x radius=3cm,y radius=2.235cm];
\end{scope}
	\draw[color=blue,very thick] (1.5,0.5) circle[radius=2.8cm];
	\draw[very thick, dotted, color=red] (-1.5,0.5) circle[radius=2.8cm];
	\draw[color=blue,very thick] (0,-2.5) circle[radius=2.8cm];
\draw[fill] (P1) ellipse[x radius=6pt, y radius=9pt] +(-1,.5)node{$X$};
\draw (nI3) +(.4,-0.8)node{$Z$};
\draw[fill] (nP1) ellipse[x radius=6pt, y radius=9pt] node[below right]{\small$P_1[3]$};
\draw (P1) +(1,-.5)node{\tiny$P_1[2]$};
\draw[fill,color=blue] (P3) ellipse[x radius=4pt,y radius=6pt] +(0,.2)node[ right]{\tiny$P_3[0]$};
\draw[fill,color=blue] (nP3) ellipse[x radius=4pt,y radius=6pt] +(-.8,-.5)node{\tiny$P_3[1]$};
\draw[fill, color=blue] (nI3) ellipse[x radius=4pt, y radius=6pt] +(-.4,0)node[above]{\tiny$I_3[1]$};
\end{scope}
} 
\newcommand{\PoneBig}{\begin{scope}[xscale=.7,yscale=.5,rotate=-5]
\coordinate (nI3) at (-1,1.8);
\coordinate (I3) at (1.58,-2.32);
\coordinate (I1) at (-1.58,-2.32);
\coordinate (nI1) at (1,1.8);
\coordinate (S2) at (0,-3.85);
\coordinate (nS2) at (0,1.4);
\coordinate (P2) at (0,-1.85);
\coordinate (nP2) at (0,2.85);
\coordinate (P1) at (-1.25,0);
\coordinate (nP1) at (2.75,-2);
\coordinate (P3) at (1.25,0);
\coordinate (nP3) at (-2.75,-2);
\draw (1,-6.5) node{$P_1\times {\color{blue}\,^\perp P_1}$};
\maskA{gray!27} 
\maskB{white}
\maskX{white}
\begin{scope}
\clip (-2,-2.3) rectangle (2,-3);
\draw[very thick,color=blue] (0,-.45) circle[radius=2.43cm];
\end{scope}
\begin{scope}
\clip (-1,2) rectangle (-3,-2.31);
\draw[very thick,color=blue] (0,-.42) circle[radius=2.44cm];
\end{scope}
\begin{scope}[rotate=-63,xshift=.55cm]
\draw[color=blue,very thick] (0,-1.14) ellipse[x radius=3cm,y radius=2.235cm];
\end{scope}%
	\draw[very thick] (-1.5,0.5) circle[radius=2.8cm];
	\draw[very thick,color=blue] (1.5,0.5) circle[radius=2.8cm];
	\draw[very thick, dotted, color=red] (0,-2.5) circle[radius=2.8cm];
\draw[fill] (I3) ellipse[x radius=6pt, y radius=9pt] +(.7,0)node[below]{\tiny$I_3[2]$};
\draw[fill] (nI3) ellipse[x radius=6pt, y radius=9pt] +(.4,-.2)node[below]{\tiny$I_3[1]$};
\draw (nI3) +(-1.5,.5)node{$Y$};
\draw[fill,color=blue] (P3) ellipse[x radius=4pt,y radius=6pt] +(0,.2)node[ right]{\tiny$P_3[0]$};
\draw[fill,color=blue] (nP3) ellipse[x radius=4pt,y radius=6pt] +(-.8,-.5)node{\tiny$P_3[1]$};
\draw[fill,color=blue] (nP2) ellipse[x radius=4pt,y radius=6pt] +(0,.2)node[above]{\tiny$P_2[1]$};
\end{scope}
} 
\newcommand{\AtwoLRc}[1]{ 
\begin{scope}
\maskAc{#1}
	\draw[thick] (0,-3)--(0,3) (0,2.6);
	\draw[thick,color=blue!80!white] (3,-3)--(-2.8,2.8);
	\draw (0,-3.5) node{$S_1\times \color{blue}P_2$};
\end{scope} 
\begin{scope}[scale=.5, xshift=4.5cm,yshift=3cm]
\coordinate (S1) at (0,0);
\coordinate (P2) at (0.5,.8);
\coordinate (S2) at (1,0);
\coordinate (S11) at (-1.5,.8);
\coordinate (S21) at (-.5,.8);
\coordinate (P21) at (-1,0);
\coordinate (S12) at (-3,0);
\coordinate (P22) at (-2.5,.8);
\coordinate (S22) at (-2,0);
\coordinate (L1) at (-2.1,1);
\coordinate (L2) at (-1.4,-.2);
\coordinate (L3) at (-.6,-.2);
\coordinate (L4) at (0.1,1);
	\draw[color=red] (L1)--(L2) (L3)--(L4);
	\draw[thick,fill,color=red] (S1) circle[radius=6pt] ; 
	\draw[thick,fill] (P2) circle[radius=5pt]; 
	\draw[thick,fill] (S2) circle[radius=5pt]; 
	\draw[thick] (S11) circle[radius=5pt] ; 
	\draw[thick,fill] (P21) circle[radius=5pt]; 
	\draw[thick,fill] (S21) circle[radius=5pt]; 
	\draw[thick] (S12) circle[radius=5pt] ; 
	\draw[thick,fill,color=red] (P22) circle[radius=6pt]; 
	\draw[thick] (S22) circle[radius=5pt]; 
\end{scope}
%
\begin{scope}[scale=.5, xshift=-3cm,yshift=-3.5cm]
\coordinate (S1) at (0,0);
\coordinate (P2) at (0.5,.8);
\coordinate (S2) at (1,0);
\coordinate (S11) at (-1.5,.8);
\coordinate (S21) at (-.5,.8);
\coordinate (P21) at (-1,0);
\coordinate (S12) at (-3,0);
\coordinate (P22) at (-2.5,.8);
\coordinate (S22) at (-2,0);
\coordinate (L1) at (-2.1,1);
\coordinate (L2) at (-1.4,-.2);
\coordinate (L3) at (-.6,-.2);
\coordinate (L4) at (0.1,1);
	\draw[color=red] (L1)--(L2) (L3)--(L4);
	\draw[thick] (S1) circle[radius=5pt] ; 
	\draw[thick,fill] (P2) circle[radius=5pt]; 
	\draw[thick,fill] (S2) circle[radius=5pt]; 
	\draw[thick] (S11) circle[radius=5pt] ; 
	\draw[thick,fill,color=red] (P21) circle[radius=6pt]; 
	\draw[thick] (S21) circle[radius=5pt]; 
	\draw[thick] (S12) circle[radius=5pt] ; 
	\draw[thick] (P22) circle[radius=5pt]; 
	\draw[thick] (S22) circle[radius=5pt]; 
\end{scope}
%
\begin{scope}[scale=.5, xshift=3.5cm,yshift=-5.5cm]
\coordinate (S1) at (0,0);
\coordinate (P2) at (0.5,.8);
\coordinate (S2) at (1,0);
\coordinate (S11) at (-1.5,.8);
\coordinate (S21) at (-.5,.8);
\coordinate (P21) at (-1,0);
\coordinate (S12) at (-3,0);
\coordinate (P22) at (-2.5,.8);
\coordinate (S22) at (-2,0);
\coordinate (L1) at (-2.1,1);
\coordinate (L2) at (-1.4,-.2);
\coordinate (L3) at (-.6,-.2);
\coordinate (L4) at (0.1,1);
	\draw[color=red] (L1)--(L2) (L3)--(L4);
	\draw[thick,fill,color=red] (S1) circle[radius=6pt] ; 
	\draw[thick,fill] (P2) circle[radius=5pt]; 
	\draw[thick,fill] (S2) circle[radius=5pt]; 
	\draw[thick] (S11) circle[radius=5pt] ; 
	\draw[thick,fill,color=red] (P21) circle[radius=6pt]; 
	\draw[thick,fill,color=red] (S21) circle[radius=6pt]; 
	\draw[thick] (S12) circle[radius=5pt] ; 
	\draw[thick] (P22) circle[radius=5pt]; 
	\draw[thick] (S22) circle[radius=5pt]; 
\end{scope}
%
\begin{scope}[scale=.5, xshift=-1.5cm,yshift=5cm]
\coordinate (S1) at (0,0);
\coordinate (P2) at (0.5,.8);
\coordinate (S2) at (1,0);
\coordinate (S11) at (-1.5,.8);
\coordinate (S21) at (-.5,.8);
\coordinate (P21) at (-1,0);
\coordinate (S12) at (-3,0);
\coordinate (P22) at (-2.5,.8);
\coordinate (S22) at (-2,0);
\coordinate (L1) at (-2.1,1);
\coordinate (L2) at (-1.4,-.2);
\coordinate (L3) at (-.6,-.2);
\coordinate (L4) at (0.1,1);
	\draw[color=red] (L1)--(L2) (L3)--(L4);
	\draw[thick] (S1) circle[radius=5pt] ; 
	\draw[thick,fill] (P2) circle[radius=5pt]; 
	\draw[thick,fill] (S2) circle[radius=5pt]; 
	\draw[thick] (S11) circle[radius=5pt] ; 
	\draw[thick,fill] (P21) circle[radius=5pt]; 
	\draw[thick] (S21) circle[radius=5pt]; 
	\draw[thick] (S12) circle[radius=5pt] ; 
	\draw[thick,fill,color=red] (P22) circle[radius=6pt]; 
	\draw[thick] (S22) circle[radius=5pt]; 
\end{scope}
\begin{scope}[scale=.5] 
	\draw (-.7,2) node{$5$};
	\draw (1.1,1.2) node{$b^+$};
	\draw (.7,-2) node{$2$};
	\draw (-1,-1) node{$b$};
\end{scope} 
}
\newcommand{\AtwoLRb}[1]{
\begin{scope}
\maskAd{#1} 
	\draw[thick,color=blue!80!white] (-3.5,0)--(3.5,0) (2.5,0);
	\draw[thick] (2.5,-2.5)--(-2.5,2.5);
	\draw (0,-3) node{$P_2\times \color{blue}S_2$};
\end{scope} 
%
\begin{scope}[scale=.5, xshift=4cm,yshift=3cm]
\coordinate (S1) at (0,0);
\coordinate (P2) at (0.5,.8);
\coordinate (S2) at (1,0);
\coordinate (S11) at (-1.5,.8);
\coordinate (S21) at (-.5,.8);
\coordinate (P21) at (-1,0);
\coordinate (S12) at (-3,0);
\coordinate (P22) at (-2.5,.8);
\coordinate (S22) at (-2,0);
\coordinate (L1) at (-2.1,1);
\coordinate (L2) at (-1.4,-.2);
\coordinate (L3) at (-.6,-.2);
\coordinate (L4) at (0.1,1);
	\draw[color=red] (L1)--(L2) (L3)--(L4);
	\draw[thick,fill] (S1) circle[radius=5pt] ; 
	\draw[thick,fill,color=red] (P2) circle[radius=6pt]; 
	\draw[thick,fill] (S2) circle[radius=5pt]; 
	\draw[thick] (S11) circle[radius=5pt] ; 
	\draw[thick] (P21) circle[radius=5pt]; 
	\draw[thick,fill] (S21) circle[radius=5pt]; 
	\draw[thick] (S12) circle[radius=5pt] ; 
	\draw[thick] (P22) circle[radius=5pt]; 
	\draw[thick,fill,color=red] (S22) circle[radius=6pt]; 
\end{scope}
%
\begin{scope}[scale=.5, xshift=-2cm,yshift=-3.5cm]
\coordinate (S1) at (0,0);
\coordinate (P2) at (0.5,.8);
\coordinate (S2) at (1,0);
\coordinate (S11) at (-1.5,.8);
\coordinate (S21) at (-.5,.8);
\coordinate (P21) at (-1,0);
\coordinate (S12) at (-3,0);
\coordinate (P22) at (-2.5,.8);
\coordinate (S22) at (-2,0);
\coordinate (L1) at (-2.1,1);
\coordinate (L2) at (-1.4,-.2);
\coordinate (L3) at (-.6,-.2);
\coordinate (L4) at (0.1,1);
	\draw[color=red] (L1)--(L2) (L3)--(L4);
	\draw[thick] (S1) circle[radius=5pt] ; 
	\draw[thick] (P2) circle[radius=5pt]; 
	\draw[thick,fill] (S2) circle[radius=5pt]; 
	\draw[thick] (S11) circle[radius=5pt] ; 
	\draw[thick] (P21) circle[radius=5pt]; 
	\draw[thick,fill,color=red] (S21) circle[radius=6pt]; 
	\draw[thick] (S12) circle[radius=5pt] ; 
	\draw[thick] (P22) circle[radius=5pt]; 
	\draw[thick] (S22) circle[radius=5pt]; 
\end{scope}
%
\begin{scope}[scale=.5, xshift=5.5cm,yshift=-2cm]
\coordinate (S1) at (0,0);
\coordinate (P2) at (0.5,.8);
\coordinate (S2) at (1,0);
\coordinate (S11) at (-1.5,.8);
\coordinate (S21) at (-.5,.8);
\coordinate (P21) at (-1,0);
\coordinate (S12) at (-3,0);
\coordinate (P22) at (-2.5,.8);
\coordinate (S22) at (-2,0);
\coordinate (L1) at (-2.1,1);
\coordinate (L2) at (-1.4,-.2);
\coordinate (L3) at (-.6,-.2);
\coordinate (L4) at (0.1,1);
	\draw[color=red] (L1)--(L2) (L3)--(L4); 
	\draw[thick,fill,color=red] (S1) circle[radius=6pt] ; 
	\draw[thick,fill,color=red] (P2) circle[radius=6pt]; 
	\draw[thick,fill] (S2) circle[radius=5pt]; 
	\draw[thick] (S11) circle[radius=5pt] ; 
	\draw[thick] (P21) circle[radius=5pt]; 
	\draw[thick,fill,color=red] (S21) circle[radius=6pt]; 
	\draw[thick] (S12) circle[radius=5pt] ; 
	\draw[thick] (P22) circle[radius=5pt]; 
	\draw[thick] (S22) circle[radius=5pt]; 
\end{scope} 
%
\begin{scope}[scale=.5, xshift=-4.5cm,yshift=1.5cm]
\coordinate (S1) at (0,0);
\coordinate (P2) at (0.5,.8);
\coordinate (S2) at (1,0);
\coordinate (S11) at (-1.5,.8);
\coordinate (S21) at (-.5,.8);
\coordinate (P21) at (-1,0);
\coordinate (S12) at (-3,0);
\coordinate (P22) at (-2.5,.8);
\coordinate (S22) at (-2,0);
\coordinate (L1) at (-2.1,1);
\coordinate (L2) at (-1.4,-.2);
\coordinate (L3) at (-.6,-.2);
\coordinate (L4) at (0.1,1);
	\draw[color=red] (L1)--(L2) (L3)--(L4);
	\draw[thick] (S1) circle[radius=5pt] ; 
	\draw[thick] (P2) circle[radius=5pt]; 
	\draw[thick,fill] (S2) circle[radius=5pt]; 
	\draw[thick] (S11) circle[radius=5pt] ; 
	\draw[thick] (P21) circle[radius=5pt]; 
	\draw[thick,fill] (S21) circle[radius=5pt]; 
	\draw[thick] (S12) circle[radius=5pt] ; 
	\draw[thick] (P22) circle[radius=5pt]; 
	\draw[thick,fill,color=red] (S22) circle[radius=6pt]; 
\end{scope}
%
\begin{scope}[scale=.5] 
	\draw (-2.2,1) node{$4$};
	\draw (.8,1.3) node{$a^+$};
	\draw (2.1,-1) node{$2$};
	\draw (-.8,-1.3) node{$a$};
\end{scope} 
}
\newcommand{\AtwoUR}[1]{
\begin{scope}
\maskAa{#1} 
	\draw[thick,color=blue!80!white] (-3.5,0)--(3.5,0) (2.5,0);
	\draw[thick,color=blue!80!white] (0,-3)--(0,3) (0,2.6);
	\draw[thick,color=blue!80!white] (0,0)--(-2.5,2.5);
	\draw (0,-3.5) node{$0\times \color{blue}A_2$};
\end{scope}
%
\begin{scope}[scale=.5, xshift=-2.5cm,yshift=-3.5cm]
\coordinate (S1) at (0,0);
\coordinate (P2) at (0.5,.8);
\coordinate (S2) at (1,0);
\coordinate (S11) at (-1.5,.8);
\coordinate (S21) at (-.5,.8);
\coordinate (P21) at (-1,0);
\coordinate (S12) at (-3,0);
\coordinate (P22) at (-2.5,.8);
\coordinate (S22) at (-2,0);
\coordinate (L1) at (-2.1,1);
\coordinate (L2) at (-1.4,-.2);
\coordinate (L3) at (-.6,-.2);
\coordinate (L4) at (0.1,1);
	\draw[color=red] (L1)--(L2) (L3)--(L4);
	\draw[thick,fill] (S1) circle[radius=5pt] ; 
	\draw[thick,fill] (P2) circle[radius=5pt]; 
	\draw[thick,fill] (S2) circle[radius=5pt]; 
	\draw[thick,fill,color=red] (S11) circle[radius=6pt] ; 
	\draw[thick,fill,color=red] (P21) circle[radius=6pt]; 
	\draw[thick,fill,color=red] (S21) circle[radius=6pt]; 
	\draw[thick] (S12) circle[radius=5pt] ; 
	\draw[thick] (P22) circle[radius=5pt]; 
	\draw[thick] (S22) circle[radius=5pt]; 
\end{scope}
%
\begin{scope}[scale=.5, xshift=4.5cm,yshift=-3.5cm]
\coordinate (S1) at (0,0);
\coordinate (P2) at (0.5,.8);
\coordinate (S2) at (1,0);
\coordinate (S11) at (-1.5,.8);
\coordinate (S21) at (-.5,.8);
\coordinate (P21) at (-1,0);
\coordinate (S12) at (-3,0);
\coordinate (P22) at (-2.5,.8);
\coordinate (S22) at (-2,0);
\coordinate (L1) at (-2.1,1);
\coordinate (L2) at (-1.4,-.2);
\coordinate (L3) at (-.6,-.2);
\coordinate (L4) at (0.1,1);
	\draw[color=red] (L1)--(L2) (L3)--(L4);
	\draw[thick,fill] (S1) circle[radius=5pt] ; 
	\draw[thick,fill] (P2) circle[radius=5pt]; 
	\draw[thick,fill] (S2) circle[radius=5pt]; 
	\draw[thick,fill] (S11) circle[radius=5pt] ; 
	\draw[thick,fill] (P21) circle[radius=5pt]; 
	\draw[thick,fill,color=red] (S21) circle[radius=6pt]; 
	\draw[thick,fill,color=red] (S12) circle[radius=6pt] ; 
	\draw[thick] (P22) circle[radius=5pt]; 
	\draw[thick] (S22) circle[radius=5pt]; 
\end{scope}
%
\begin{scope}[scale=.5, xshift=4.5cm,yshift=3cm]
\coordinate (S1) at (0,0);
\coordinate (P2) at (0.5,.8);
\coordinate (S2) at (1,0);
\coordinate (S11) at (-1.5,.8);
\coordinate (S21) at (-.5,.8);
\coordinate (P21) at (-1,0);
\coordinate (S12) at (-3,0);
\coordinate (P22) at (-2.5,.8);
\coordinate (S22) at (-2,0);
\coordinate (L1) at (-2.1,1);
\coordinate (L2) at (-1.4,-.2);
\coordinate (L3) at (-.6,-.2);
\coordinate (L4) at (0.1,1);
	\draw[color=red] (L1)--(L2) (L3)--(L4);
	\draw[thick,fill] (S1) circle[radius=5pt] ; 
	\draw[thick,fill] (P2) circle[radius=5pt]; 
	\draw[thick,fill] (S2) circle[radius=5pt]; 
	\draw[thick,fill] (S11) circle[radius=5pt] ; 
	\draw[thick,fill] (P21) circle[radius=5pt]; 
	\draw[thick,fill] (S21) circle[radius=5pt]; 
	\draw[thick,fill,color=red] (S12) circle[radius=6pt] ; 
	\draw[thick,fill,color=red] (P22) circle[radius=6pt]; 
	\draw[thick,fill,color=red] (S22) circle[radius=6pt]; 
\end{scope}
%
\begin{scope}[scale=.5, xshift=-1.5cm,yshift=5cm]
\coordinate (S1) at (0,0);
\coordinate (P2) at (0.5,.8);
\coordinate (S2) at (1,0);
\coordinate (S11) at (-1.5,.8);
\coordinate (S21) at (-.5,.8);
\coordinate (P21) at (-1,0);
\coordinate (S12) at (-3,0);
\coordinate (P22) at (-2.5,.8);
\coordinate (S22) at (-2,0);
\coordinate (L1) at (-2.1,1);
\coordinate (L2) at (-1.4,-.2);
\coordinate (L3) at (-.6,-.2);
\coordinate (L4) at (0.1,1);
	\draw[color=red] (L1)--(L2) (L3)--(L4);
	\draw[thick,fill] (S1) circle[radius=5pt] ; 
	\draw[thick,fill] (P2) circle[radius=5pt]; 
	\draw[thick,fill] (S2) circle[radius=5pt]; 
	\draw[thick,fill,color=red] (S11) circle[radius=6pt] ; 
	\draw[thick,fill] (P21) circle[radius=5pt]; 
	\draw[thick,fill] (S21) circle[radius=5pt]; 
	\draw[thick] (S12) circle[radius=5pt] ; 
	\draw[thick,fill,color=red] (P22) circle[radius=6pt]; 
	\draw[thick,fill,color=red] (S22) circle[radius=6pt]; 
\end{scope}
%
\begin{scope}[scale=.5, xshift=-4.5cm,yshift=1.5cm]
\coordinate (S1) at (0,0);
\coordinate (P2) at (0.5,.8);
\coordinate (S2) at (1,0);
\coordinate (S11) at (-1.5,.8);
\coordinate (S21) at (-.5,.8);
\coordinate (P21) at (-1,0);
\coordinate (S12) at (-3,0);
\coordinate (P22) at (-2.5,.8);
\coordinate (S22) at (-2,0);
\coordinate (L1) at (-2.1,1);
\coordinate (L2) at (-1.4,-.2);
\coordinate (L3) at (-.6,-.2);
\coordinate (L4) at (0.1,1);
	\draw[color=red] (L1)--(L2) (L3)--(L4);
	\draw[thick,fill] (S1) circle[radius=5pt] ; 
	\draw[thick,fill] (P2) circle[radius=5pt]; 
	\draw[thick,fill] (S2) circle[radius=5pt]; 
	\draw[thick,fill,color=red] (S11) circle[radius=6pt] ; 
	\draw[thick,fill,color=red] (P21) circle[radius=6pt]; 
	\draw[thick,fill] (S21) circle[radius=5pt]; 
	\draw[thick] (S12) circle[radius=5pt] ; 
	\draw[thick] (P22) circle[radius=5pt]; 
	\draw[thick,fill,color=red] (S22) circle[radius=6pt]; 
\end{scope}
\begin{scope}[scale=.5] 
	\draw (-2.2,1) node{$a^+$};
	\draw (-.7,2) node{$b^+$};
	\draw (1,1.2) node{$3$};
	\draw (1,-1.2) node{$c^+$};
	\draw (-1,-1.3) node{$2$};
\end{scope} 
}
\newcommand{\AtwoUL}[1]{
\begin{scope}
\maskAb{#1} 
	\draw[thick] (-3.5,0)--(3.5,0) (2.5,0);
	\draw[thick,color=blue!80!white] (0,-3)--(0,3) (0,2.6);
	\draw (0,-3.5) node{$S_2\times \color{blue}S_1$};
\end{scope}
\begin{scope}[scale=.5, xshift=-2.5cm,yshift=-3.5cm]
\coordinate (S1) at (0,0);
\coordinate (P2) at (0.5,.8);
\coordinate (S2) at (1,0);
\coordinate (S11) at (-1.5,.8);
\coordinate (S21) at (-.5,.8);
\coordinate (P21) at (-1,0);
\coordinate (S12) at (-3,0);
\coordinate (P22) at (-2.5,.8);
\coordinate (S22) at (-2,0);
\coordinate (L1) at (-2.1,1);
\coordinate (L2) at (-1.4,-.2);
\coordinate (L3) at (-.6,-.2);
\coordinate (L4) at (0.1,1);
	\draw[color=red] (L1)--(L2) (L3)--(L4);
	\draw[thick,fill] (S1) circle[radius=5pt] ; 
	\draw[thick] (P2) circle[radius=5pt]; 
	\draw[thick] (S2) circle[radius=5pt]; 
	\draw[thick,fill,color=red] (S11) circle[radius=6pt] ; 
	\draw[thick] (P21) circle[radius=5pt]; 
	\draw[thick] (S21) circle[radius=5pt]; 
	\draw[thick] (S12) circle[radius=5pt] ; 
	\draw[thick] (P22) circle[radius=5pt]; 
	\draw[thick] (S22) circle[radius=5pt]; 
\end{scope}
%
\begin{scope}[scale=.5, xshift=4.5cm,yshift=-3.5cm]
\coordinate (S1) at (0,0);
\coordinate (P2) at (0.5,.8);
\coordinate (S2) at (1,0);
\coordinate (S11) at (-1.5,.8);
\coordinate (S21) at (-.5,.8);
\coordinate (P21) at (-1,0);
\coordinate (S12) at (-3,0);
\coordinate (P22) at (-2.5,.8);
\coordinate (S22) at (-2,0);
\coordinate (L1) at (-2.1,1);
\coordinate (L2) at (-1.4,-.2);
\coordinate (L3) at (-.6,-.2);
\coordinate (L4) at (0.1,1);
	\draw[color=red] (L1)--(L2) (L3)--(L4);
	\draw[thick,fill] (S1) circle[radius=5pt] ; 
	\draw[thick] (P2) circle[radius=5pt]; 
	\draw[thick] (S2) circle[radius=5pt]; 
	\draw[thick,fill] (S11) circle[radius=5pt] ; 
	\draw[thick] (P21) circle[radius=5pt]; 
	\draw[thick] (S21) circle[radius=5pt]; 
	\draw[thick,fill,color=red] (S12) circle[radius=6pt] ; 
	\draw[thick] (P22) circle[radius=5pt]; 
	\draw[thick] (S22) circle[radius=5pt]; 
\end{scope}
%
\begin{scope}[scale=.5, xshift=-2.5cm,yshift=3.5cm]
\coordinate (S1) at (0,0);
\coordinate (P2) at (0.5,.8);
\coordinate (S2) at (1,0);
\coordinate (S11) at (-1.5,.8);
\coordinate (S21) at (-.5,.8);
\coordinate (P21) at (-1,0);
\coordinate (S12) at (-3,0);
\coordinate (P22) at (-2.5,.8);
\coordinate (S22) at (-2,0);
\coordinate (L1) at (-2.1,1);
\coordinate (L2) at (-1.4,-.2);
\coordinate (L3) at (-.6,-.2);
\coordinate (L4) at (0.1,1);
	\draw[color=red] (L1)--(L2) (L3)--(L4);
	\draw[thick,fill] (S1) circle[radius=5pt] ; 
	\draw[thick,fill] (P2) circle[radius=5pt]; 
	\draw[thick,fill,color=red] (S2) circle[radius=6pt]; 
	\draw[thick,fill,color=red] (S11) circle[radius=6pt] ; 
	\draw[thick] (P21) circle[radius=5pt]; 
	\draw[thick] (S21) circle[radius=5pt]; 
	\draw[thick] (S12) circle[radius=5pt] ; 
	\draw[thick] (P22) circle[radius=5pt]; 
	\draw[thick] (S22) circle[radius=5pt]; 
\end{scope}
%
\begin{scope}[scale=.5, xshift=4.5cm,yshift=3.5cm]
\coordinate (S1) at (0,0);
\coordinate (P2) at (0.5,.8);
\coordinate (S2) at (1,0);
\coordinate (S11) at (-1.5,.8);
\coordinate (S21) at (-.5,.8);
\coordinate (P21) at (-1,0);
\coordinate (S12) at (-3,0);
\coordinate (P22) at (-2.5,.8);
\coordinate (S22) at (-2,0);
\coordinate (L1) at (-2.1,1);
\coordinate (L2) at (-1.4,-.2);
\coordinate (L3) at (-.6,-.2);
\coordinate (L4) at (0.1,1);
	\draw[color=red] (L1)--(L2) (L3)--(L4);
	\draw[thick,fill] (S1) circle[radius=5pt] ; 
	\draw[thick,fill] (P2) circle[radius=5pt]; 
	\draw[thick,fill,color=red] (S2) circle[radius=6pt]; 
	\draw[thick,fill] (S11) circle[radius=5pt] ; 
	\draw[thick] (P21) circle[radius=5pt]; 
	\draw[thick] (S21) circle[radius=5pt]; 
	\draw[thick,fill,color=red] (S12) circle[radius=6pt] ; 
	\draw[thick] (P22) circle[radius=5pt]; 
	\draw[thick] (S22) circle[radius=5pt]; 
\end{scope}
\begin{scope}[scale=.5]
	\draw (-1,1.2) node{$2$};
	\draw (1.3,1.4) node{$c^+$};
	\draw (1,-1.5) node{$6$};
	\draw (-1,-1.5) node{$c$};
\end{scope} 
}
\newcommand{\AtwoLL}[1]{ 
\begin{scope}
\maskAe{#1} 
	\draw[thick] (-3.5,0)--(3.5,0) (2.5,0);
	\draw[thick] (0,-3)--(0,3) (0,2.6);
	\draw[thick] (0,0)--(-2.5,2.5);
	\draw (0,-3.5) node{$A_2\times \color{blue}0$};
%
%
\begin{scope}[scale=.5,xshift=-3cm,yshift=-4cm]
	\draw[thick] (0,0) circle[radius=5pt];
	\draw[thick] (0.5,.8) circle[radius=5pt];
	\draw[thick] (1,0) circle[radius=5pt];
%
\coordinate (S11) at (-1.5,.8);
\coordinate (S21) at (-.5,.8);
\coordinate (P21) at (-1,0);
\coordinate (S12) at (-3,0);
\coordinate (P22) at (-2.5,.8);
\coordinate (S22) at (-2,0);
\coordinate (L1) at (-2.1,1);
\coordinate (L2) at (-1.4,-.2);
\coordinate (L3) at (-.6,-.2);
\coordinate (L4) at (0.1,1);
	\draw[color=red] (L1)--(L2) (L3)--(L4);
	\draw[thick] (S11) circle[radius=5pt] ; %
	\draw[thick] (P21) circle[radius=5pt]; %
	\draw[thick] (S21) circle[radius=5pt]; %
	\draw[thick] (S12) circle[radius=5pt] ; %
	\draw[thick] (P22) circle[radius=5pt]; %
	\draw[thick] (S22) circle[radius=5pt]; 
\end{scope} 
%
%
\begin{scope}[scale=.5,xshift=4.5cm,yshift=-4cm]
	\draw[thick,fill,color=red] (0,0) circle[radius=6pt];
	\draw[thick] (0.5,.8) circle[radius=5pt];
	\draw[thick] (1,0) circle[radius=5pt];
\coordinate (S11) at (-1.5,.8);
\coordinate (S21) at (-.5,.8);
\coordinate (P21) at (-1,0);
\coordinate (S12) at (-3,0);
\coordinate (P22) at (-2.5,.8);
\coordinate (S22) at (-2,0);
\coordinate (L1) at (-2.1,1);
\coordinate (L2) at (-1.4,-.2);
\coordinate (L3) at (-.6,-.2);
\coordinate (L4) at (0.1,1);
	\draw[color=red] (L1)--(L2) (L3)--(L4);
	\draw[thick] (S11) circle[radius=5pt] ; %
	\draw[thick] (P21) circle[radius=5pt]; %
	\draw[thick] (S21) circle[radius=5pt]; %
	\draw[thick] (S12) circle[radius=5pt] ; %
	\draw[thick] (P22) circle[radius=5pt]; %
	\draw[thick] (S22) circle[radius=5pt]; 
\end{scope} 
\begin{scope}[scale=.5,xshift=4.5cm,yshift=3cm]
	\draw[thick,fill,color=red] (0,0) circle[radius=6pt];
	\draw[thick,fill,color=red] (0.5,.8) circle[radius=6pt];
	\draw[thick,fill,color=red] (1,0) circle[radius=6pt];
\coordinate (S11) at (-1.5,.8);
\coordinate (S21) at (-.5,.8);
\coordinate (P21) at (-1,0);
\coordinate (S12) at (-3,0);
\coordinate (P22) at (-2.5,.8);
\coordinate (S22) at (-2,0);
\coordinate (L1) at (-2.1,1);
\coordinate (L2) at (-1.4,-.2);
\coordinate (L3) at (-.6,-.2);
\coordinate (L4) at (0.1,1);
	\draw[color=red] (L1)--(L2) (L3)--(L4);
	\draw[thick] (S11) circle[radius=5pt] ; %
	\draw[thick] (P21) circle[radius=5pt]; %
	\draw[thick] (S21) circle[radius=5pt]; %
	\draw[thick] (S12) circle[radius=5pt] ; %
	\draw[thick] (P22) circle[radius=5pt]; %
	\draw[thick] (S22) circle[radius=5pt]; 
\end{scope} 
%
\begin{scope}[scale=.5,xshift=-1.5cm,yshift=5cm]
	\draw[thick] (0,0) circle[radius=5pt];
	\draw[thick,fill,color=red] (0.5,.8) circle[radius=6pt];
	\draw[thick,fill,color=red] (1,0) circle[radius=6pt];
\coordinate (S11) at (-1.5,.8);
\coordinate (S21) at (-.5,.8);
\coordinate (P21) at (-1,0);
\coordinate (S12) at (-3,0);
\coordinate (P22) at (-2.5,.8);
\coordinate (S22) at (-2,0);
\coordinate (L1) at (-2.1,1);
\coordinate (L2) at (-1.4,-.2);
\coordinate (L3) at (-.6,-.2);
\coordinate (L4) at (0.1,1);
	\draw[color=red] (L1)--(L2) (L3)--(L4);
	\draw[thick] (S11) circle[radius=5pt] ; %
	\draw[thick] (P21) circle[radius=5pt]; %
	\draw[thick] (S21) circle[radius=5pt]; %
	\draw[thick] (S12) circle[radius=5pt] ; %
	\draw[thick] (P22) circle[radius=5pt]; %
	\draw[thick] (S22) circle[radius=5pt]; 
\end{scope} 
%
\begin{scope}[scale=.5,xshift=-5cm,yshift=1.5cm]
	\draw[thick] (0,0) circle[radius=5pt];
	\draw[thick] (0.5,.8) circle[radius=5pt];
	\draw[thick,fill,color=red] (1,0) circle[radius=6pt];
\coordinate (S11) at (-1.5,.8);
\coordinate (S21) at (-.5,.8);
\coordinate (P21) at (-1,0);
\coordinate (S12) at (-3,0);
\coordinate (P22) at (-2.5,.8);
\coordinate (S22) at (-2,0);
\coordinate (L1) at (-2.1,1);
\coordinate (L2) at (-1.4,-.2);
\coordinate (L3) at (-.6,-.2);
\coordinate (L4) at (0.1,1);
	\draw[color=red] (L1)--(L2) (L3)--(L4);
	\draw[thick] (S11) circle[radius=5pt] ; %
	\draw[thick] (P21) circle[radius=5pt]; %
	\draw[thick] (S21) circle[radius=5pt]; %
	\draw[thick] (S12) circle[radius=5pt] ; %
	\draw[thick] (P22) circle[radius=5pt]; %
	\draw[thick] (S22) circle[radius=5pt]; 
\end{scope} 
\begin{scope}[scale=.5] 
	\draw (-2.2,1) node{$a$};
	\draw (-.7,2) node{$b$};
	\draw (1,1.2) node{$2$};
	\draw (1,-1.5) node{$c$};
	\draw (-1,-1.5) node{$1$};
\end{scope}
\end{scope} 
}
\newcommand{\AtwoVert}[1]{ 
\draw[thin,fill,color=#1] (-3.5,-3) rectangle (3.5,3);
\draw[thick] (-3.5,0)--(3.5,0) (0,-3)--(0,3) (2.5,-2.5)--(0,0);
\begin{scope}[scale=.5,xshift=4cm,yshift=3cm]
\coordinate (S11) at (-1.5,.8);
\coordinate (S21) at (-.5,.8);
\coordinate (P21) at (-1,0);
\coordinate (S12) at (-3,0);
\coordinate (P22) at (-2.5,.8);
\coordinate (S22) at (-2,0);
\coordinate (L1) at (-2.1,1);
\coordinate (L2) at (-1.4,-.2);
\coordinate (L3) at (-.6,-.2);
\coordinate (L4) at (0.1,1);
	\draw[thick] (S12) circle[radius=5pt] ; 
	\draw[thick] (P22) circle[radius=5pt]; 
	\draw[thick] (S22) circle[radius=5pt]; 
	\draw[thick,fill,color=red] (0,0) circle[radius=6pt];
	\draw[thick,fill,color=red] (0.5,.8) circle[radius=6pt];
	\draw[thick,fill,color=red] (1,0) circle[radius=6pt];
	\draw[thick] (S11) circle[radius=5pt];
	\draw[thick] (S21) circle[radius=5pt];
	\draw[thick] (P21) circle[radius=5pt];
	\draw[color=red] (L1)--(L2) (L3)--(L4);
\end{scope}
%
%
\begin{scope}[scale=.5, xshift=-2.5cm,yshift=3cm]
\coordinate (S1) at (0,0);
\coordinate (P2) at (0.5,.8);
\coordinate (S2) at (1,0);
\coordinate (S11) at (-1.5,.8);
\coordinate (S21) at (-.5,.8);
\coordinate (P21) at (-1,0);
\coordinate (S12) at (-3,0);
\coordinate (P22) at (-2.5,.8);
\coordinate (S22) at (-2,0);
\coordinate (L1) at (-2.1,1);
\coordinate (L2) at (-1.4,-.2);
\coordinate (L3) at (-.6,-.2);
\coordinate (L4) at (0.1,1);
	\draw[color=red] (L1)--(L2) (L3)--(L4);
	\draw[thick,fill] (S1) circle[radius=5pt] ; 
	\draw[thick,fill] (P2) circle[radius=5pt]; 
	\draw[thick,fill,color=red] (S2) circle[radius=6pt]; 
	\draw[thick,fill,color=red] (S11) circle[radius=6pt] ; 
	\draw[thick] (P21) circle[radius=5pt]; 
	\draw[thick] (S21) circle[radius=5pt]; 
	\draw[thick] (S12) circle[radius=5pt] ; 
	\draw[thick] (P22) circle[radius=5pt]; 
	\draw[thick] (S22) circle[radius=5pt]; 
\end{scope}
%
%
\begin{scope}[scale=.5, xshift=-2.5cm,yshift=-3.5cm]
\coordinate (S1) at (0,0);
\coordinate (P2) at (0.5,.8);
\coordinate (S2) at (1,0);
\coordinate (S11) at (-1.5,.8);
\coordinate (S21) at (-.5,.8);
\coordinate (P21) at (-1,0);
\coordinate (S12) at (-3,0);
\coordinate (P22) at (-2.5,.8);
\coordinate (S22) at (-2,0);
\coordinate (L1) at (-2.1,1);
\coordinate (L2) at (-1.4,-.2);
\coordinate (L3) at (-.6,-.2);
\coordinate (L4) at (0.1,1);
	\draw[color=red] (L1)--(L2) (L3)--(L4);
	\draw[thick,fill] (S1) circle[radius=5pt] ; 
	\draw[thick,fill] (P2) circle[radius=5pt]; 
	\draw[thick,fill] (S2) circle[radius=5pt]; 
	\draw[thick,fill,color=red] (S11) circle[radius=6pt] ; 
	\draw[thick,fill,color=red] (P21) circle[radius=6pt]; 
	\draw[thick,fill,color=red] (S21) circle[radius=6pt]; 
	\draw[thick] (S12) circle[radius=5pt] ; 
	\draw[thick] (P22) circle[radius=5pt]; 
	\draw[thick] (S22) circle[radius=5pt]; 
\end{scope}
%
%
\begin{scope}[scale=.5, xshift=5.5cm,yshift=-2cm]
\coordinate (S1) at (0,0);
\coordinate (P2) at (0.5,.8);
\coordinate (S2) at (1,0);
\coordinate (S11) at (-1.5,.8);
\coordinate (S21) at (-.5,.8);
\coordinate (P21) at (-1,0);
\coordinate (S12) at (-3,0);
\coordinate (P22) at (-2.5,.8);
\coordinate (S22) at (-2,0);
\coordinate (L1) at (-2.1,1);
\coordinate (L2) at (-1.4,-.2);
\coordinate (L3) at (-.6,-.2);
\coordinate (L4) at (0.1,1);
	\draw[color=red] (L1)--(L2) (L3)--(L4); 
	\draw[thick,fill,color=red] (S1) circle[radius=6pt] ; 
	\draw[thick,fill,color=red] (P2) circle[radius=6pt]; 
	\draw[thick,fill] (S2) circle[radius=5pt]; 
	\draw[thick] (S11) circle[radius=5pt] ; 
	\draw[thick] (P21) circle[radius=5pt]; 
	\draw[thick,fill,color=red] (S21) circle[radius=6pt]; 
	\draw[thick] (S12) circle[radius=5pt] ; 
	\draw[thick] (P22) circle[radius=5pt]; 
	\draw[thick] (S22) circle[radius=5pt]; 
\end{scope}
%
\begin{scope}[scale=.5, xshift=3.5cm,yshift=-5.5cm]
\coordinate (S1) at (0,0);
\coordinate (P2) at (0.5,.8);
\coordinate (S2) at (1,0);
\coordinate (S11) at (-1.5,.8);
\coordinate (S21) at (-.5,.8);
\coordinate (P21) at (-1,0);
\coordinate (S12) at (-3,0);
\coordinate (P22) at (-2.5,.8);
\coordinate (S22) at (-2,0);
\coordinate (L1) at (-2.1,1);
\coordinate (L2) at (-1.4,-.2);
\coordinate (L3) at (-.6,-.2);
\coordinate (L4) at (0.1,1);
	\draw[color=red] (L1)--(L2) (L3)--(L4);
	\draw[thick,fill,color=red] (S1) circle[radius=6pt] ; 
	\draw[thick,fill] (P2) circle[radius=5pt]; 
	\draw[thick,fill] (S2) circle[radius=5pt]; 
	\draw[thick] (S11) circle[radius=5pt] ; 
	\draw[thick,fill,color=red] (P21) circle[radius=6pt]; 
	\draw[thick,fill,color=red] (S21) circle[radius=6pt]; 
	\draw[thick] (S12) circle[radius=5pt] ; 
	\draw[thick] (P22) circle[radius=5pt]; 
	\draw[thick] (S22) circle[radius=5pt]; 
\end{scope}
}
\title{Horizontal and vertical mutation fans}
\author{Kiyoshi Igusa}
\address{Department of Mathematics, Brandeis University, Waltham, MA 02454}\email{igusa@brandeis.edu}
\subjclass[2010]{
18E30:16G20}
 \keywords{exceptional sequence, $m$-cluster category, semi-invariant pictures, $t$-structures, $c$-vectors, $g$-vectors, silting objects, simple minded collections}
\begin{document}

\begin{abstract} We introduce diagrams for $m$-cluster categories which we call ``horizontal'' and ``vertical'' mutation fans. These are analogous to the mutation fans (also known as ``semi-invariant pictures'' or ``scattering diagrams'') for the standard ($m=1$) cluster case which are dual to the poset of finitely generated torsion classes. The purpose of these diagrams is to visualize mutations and analogues of maximal green sequences in the $m$-cluster category with special emphasis on the $c$-vectors (the ``brick'' labels).
\end{abstract}

\maketitle


%
%

\section{Introduction}

These are annotated notes from my lecture at Workshop on Cluster Algebras and Related Topics held at the Chern Institute of Mathematics July 10-13, 2017. Preliminaries discussing the standard ``pictures'' used to visualize maximal green sequences are added. Also, there are additional comments to address a question of Zhe Han right after my talk: Do the horizontal fans correspond to torsion classes? I said ``yes'', but I will answer this more completely in these notes using bounded $t$-structures and the ``spots'' notation of \cite{ST} as illustrated in the lecture of Osamu Iyama on torsion classes and support $\tau$-tilting modules. Finally, these notes end with a list and short description of my other papers and comments about the history of stability conditions and maximal green sequences.

These notes begin with a ``preview'' of the ``horizontal'' and ``vertical'' mutation fans. In the lecture I used $A_2$ as the preview. Here I use a rank 3 example: $A_3$ (see Figure \ref{preview}). The horizontal fans are viewed as the ``floors'' of a building. In rank 3, each floor is subdivided into triangular ``rooms'' (in rank 2 each room has only two walls). Each wall of each room has a door to the next room and at most one set of stairs going either up or down. The vertical fans are sets of rooms connected only be stairs.

All figures for quivers with 3 vertices (Figures \ref{Figure00}, \ref{FigA2tilde}, \ref{A3: vertical fan 0 x A3 x 0}, \ref{A3: horizontal fans H(X), H(Y)}, \ref{Fig:A3 Horizontal fans}) are drawn in perspective. The three coordinate hyperplanes become great circles when intersected with the unit sphere $S^2\subseteq \RR^3$. The stereographic projection to $\RR^2$ gives three overlapping circles. When the plane is drawn in perspective, all circles become ellipses. These figures should be interpreted as patterns on the ground viewed from the side.

\begin{figure}[htbp] 
\begin{center}
\begin{tikzpicture}
\begin{scope}[yscale=.45,yshift=-5cm] 
	\draw[fill,color=white] (0,-.5) ellipse [x radius=3.3cm,y radius=2.8cm];
	\allblackB 
	\begin{scope}[xscale=.7,yscale=.45,rotate=-5]
		\draw[color=black,fill] (P3) ellipse[x radius=6pt, y radius=9pt] +(0,-.5)node[above right]{\tiny$P_3[2]$};
		\draw[color=black,fill] (nP3) ellipse[x radius=6pt, y radius=9pt] +(0,0.5)node[below left]{\tiny$P_3[3]$};
	\end{scope}
	\begin{scope}[xscale=.7,yscale=.45,rotate=-5]
		\draw[fill] (P1) ellipse[x radius=6pt, y radius=9pt] node[above left]{\tiny$P_1[2]$};
		\draw[fill] (nP1) ellipse[x radius=6pt, y radius=9pt] node[below right]{\tiny$P_1[3]$};
	\end{scope}
\end{scope}
\begin{scope}[yscale=.45,yshift=0cm]
	\draw[fill,color=white] (0,-.5) ellipse [x radius=3.3cm, y radius=2.8cm];
	\PthreePerpB 
\end{scope}
\begin{scope}[yscale=.45,yshift=5cm]
	\draw[fill,color=white] (0,-.5) ellipse [x radius=3.3cm, y radius=3cm];
	\IoneB 
		\draw[thick,color=black] (-1.4,.4) node{$X$};
\end{scope}
\begin{scope}[yscale=.45,yshift=10cm]
	\draw[fill,color=white] (0,-.5) ellipse [x radius=3.3cm, y radius=3cm];
	\PoneB 
		\draw[thick,color=black] (-2,.9)node{$Y$};
\end{scope}
\begin{scope}[yscale=.45,yshift=15cm]
	\draw[fill,color=white] (0,-.5) ellipse [x radius=3.3cm, y radius=2.8cm];
	\allblueB 
\end{scope}
\begin{scope}[yscale=.45, yshift=-5.45cm, xshift=-.6cm]
	\draw[very thick, color=green!80!black,dashed,->] (0,0)--(0,5);
\end{scope}
\begin{scope}[yscale=.45, yshift=-.16cm, xshift=-1.4cm]
	\draw[very thick, color=green!80!black,dashed,->] (0,0)--(0,5);
\end{scope}
\begin{scope}[yscale=.45, yshift=5.6cm, xshift=-1.8cm]
	\draw[very thick, color=green!80!black,dashed,->] (0,0)--(0,5);
\end{scope}
\begin{scope}[yscale=.45, yshift=11.25cm, xshift=-2.3cm]
	\draw[very thick, color=green!80!black,dashed,->] (0,0)--(0,5);
\end{scope}
\end{tikzpicture}
\begin{tikzpicture}[yscale=.5]
\draw[color=white] (-3,-8) rectangle (-2,-12);
{\begin{scope}[xscale=.7,yscale=.6,rotate=-5]
\coordinate (nI3) at (-1,1.8);
\coordinate (I3) at (1.58,-2.32);
\coordinate (I1) at (-1.58,-2.32);
\coordinate (nI1) at (1,1.8);
\coordinate (S2) at (0,-3.85);
\coordinate (nS2) at (0,1.4);
\coordinate (P2) at (0,-1.85);
\coordinate (nP2) at (0,2.85);
\coordinate (P1) at (-1.25,0);
\coordinate (nP1) at (2.75,-2);
\coordinate (P3) at (1.25,0);
\coordinate (nP3) at (-2.75,-2);
\draw (.5,-7.5) node{vertical fan};
\draw[fill,color=gray!30] (-5,-6) rectangle (5,4);
\maskA{gray!45}
\maskX{gray!35}
\maskB{white}
\maskC{white}
\maskCC{gray!50}
\begin{scope}
\clip (-1,1.8) rectangle (1,2.1);
\draw[very thick,color=black] (0,-.42) circle[radius=2.43cm];
\end{scope}
\begin{scope}[rotate=-63,xshift=.55cm]
\clip (0,-4) rectangle (-3.1,1.5);
\draw[color=black,very thick] (0,-1.14) ellipse[x radius=3cm,y radius=2.235cm];
\end{scope}
\begin{scope}[rotate=63,xshift=-.55cm]
\clip (0,-4) rectangle (3.1,1.5);
\draw[color=black,very thick] (0,-1.14) ellipse[x radius=3cm,y radius=2.235cm];
\end{scope}
	\draw[very thick] (-1.5,0.5) circle[radius=2.8cm];
	\draw[very thick] (1.5,0.5) circle[radius=2.8cm];
	\draw[very thick] (0,-2.5) circle[radius=2.8cm];
\draw[fill] (P1) ellipse[x radius=4.5pt,y radius=6pt] +(.8,-.3)node{\tiny$P_1[2]$};
\draw[fill] (P2) ellipse[x radius=4.5pt,y radius=6pt] +(0,-.2)node[below]{\tiny$P_2[2]$};
\draw[fill] (P3) ellipse[x radius=4.5pt,y radius=6pt] +(0,.2)node[ right]{\tiny$P_3[2]$};
\draw[fill] (nP1) ellipse[x radius=4.5pt,y radius=6pt] node[below right]{\tiny$P_1[1]$};
\draw[fill] (nP2) ellipse[x radius=4.5pt,y radius=6pt] +(0,.2)node[above]{\tiny$P_2[1]$};
\draw[fill] (nP3) ellipse[x radius=4.5pt,y radius=6pt] +(-.7,-.3)node{\tiny$P_3[1]$};
\draw[fill] (nI1) ellipse[x radius=4.5pt,y radius=6pt];
\draw[fill] (nI3) ellipse[x radius=4.5pt,y radius=6pt] +(-.3,.5)node{\tiny $I_3[1]$};
\draw[fill] (nS2) ellipse[x radius=4.5pt,y radius=6pt] ;
\draw (nI3) +(-1.5,.5)node{$Y$};
\draw (P1) +(-1,.5)node{$X$};
\draw[very thick, color=green!80!black,dashed,->] (P2)+(0,.7) .. controls  (-2,-1.8) and (-3,-1.5)..(-4,3.5); 
\end{scope}
}
\end{tikzpicture}
\caption{Horizontal fans are like floors of a building. Each floor is divided into triangular rooms (for $n=3$). Each room is labeled with a bounded $t$-structure, the corners are labeled with components of the corresponding silting object, the walls are labelled with corresponding simple-minded components. Red dotted circles are absent walls with simple labels. Green dashed lines are ``stairs'' connecting shaded rooms which form 5 out of 14 rooms in the vertical fan shown on right. The wall at the top of each flight of stairs is blue.}
\label{Figure00}\label{preview}
\end{center}
\end{figure}
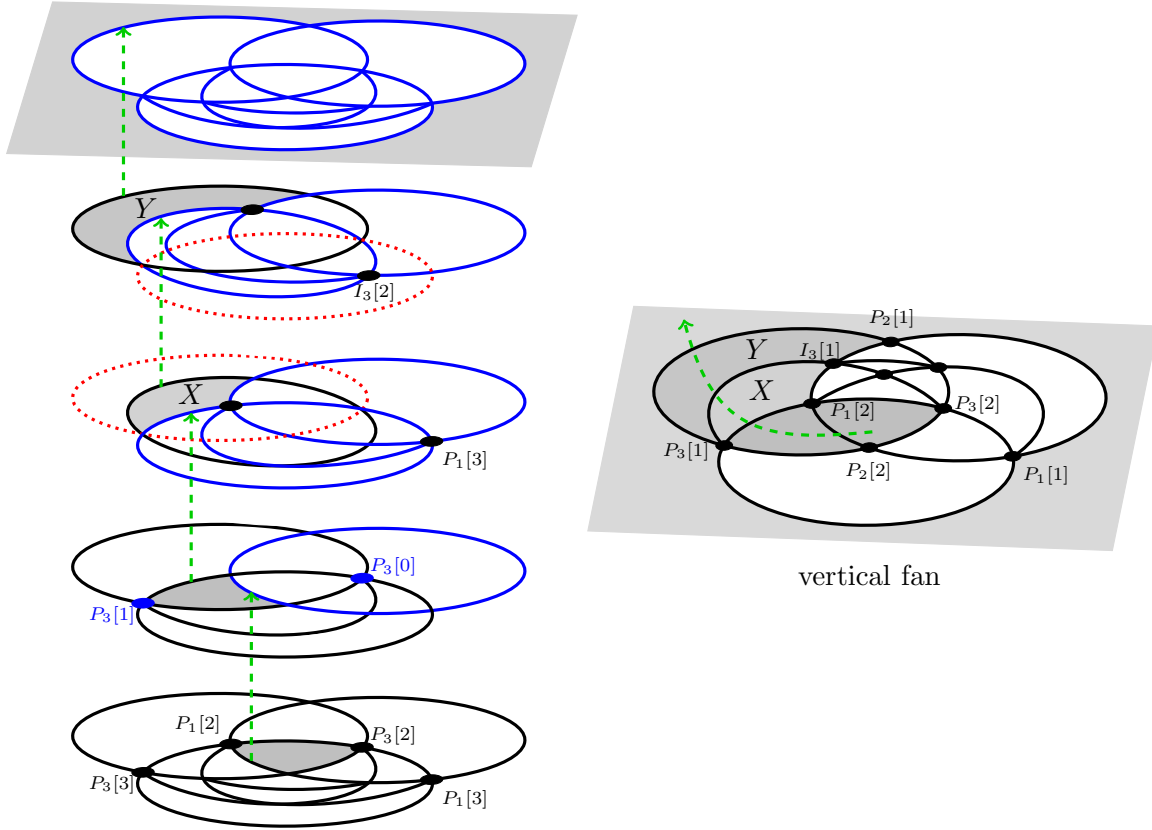

I would like to thank the Chern Institute and organizers of the Workshop for their hospitality and the participants and referees for their inspiring comments.

\section{Preliminaries}

The goal of this project is to visualize ``$m$-maximal green sequences''. First, we will go over several definitions of a standard maximal green sequence and demonstrate some valuable insights derived from the ``semi-invariant picture'' which are more difficult to see in the Hasse diagram of the poset of torsion classes which contains the same information. This point of view is expanded upon in my other papers which are listed at the end of these notes.

\subsection{Basic definitions}

Let $\Lambda$ be a finite dimensional hereditary algebra over a field $K$. We assume $\Lambda$ is basic so that the dimension of the simple module $S_i$ is equal to the dimension of the endomorphism ring of $S_i$ and $P_i$, its projective cover. Call this $f_i$.
\[
	f_i:=\dim_KS_i=\dim_K\End_\Lambda(P_i).
\]
Let $D$ be the $n\times n$ diagonal matrix with diagonal entries $f_i$. In the lecture, I assumed, for simplicity of notation, that $K$ is algebraically closed and $D=I_n$, the identity matrix.

For any finitely generated right $\Lambda$-module $M$ let $\undim M\in\NNN^n$ be the \emph{dimension vector} of $M$. The $i$th coordinate of $\undim M$ is the number of times that $S_i$ occurs in the composition series of $M$. 

Suppose that $M$ has a minimal projective presentation:
\[
	\coprod P_i^{b_i}\to \coprod P_i^{a_i}\to M.
\]
Then, the \emph{$g$-vector} of $M$ is the integer vector $g(M)\in \ZZ^n$ whose $i$th coordinate is $a_i-b_i$. For any two modules $X,Y$, the \emph{Euler-Ringel pairing} is given by the dot product:
\[
	Dg(X)\cdot \undim Y=g(X)^tD\undim Y=\dim_K\Hom_\Lambda(X,Y)-\dim_K\Ext^1_\Lambda(X,Y).
\]
One easy example is $g(P_i)=e_i$, the $i$th unit vector. The indecomposable objects in the bounded derived category of $mod\text-\Lambda$ are $M[k]$ where $M$ is an indecomposable module and $k\in\ZZ$. The $g$-vector of $M[k]$ is defined to be
\[
	g(M[k])=(-1)^kg(M).
\]
In particular, $g(P_i[1])=-g(P_i)=-e_i$.

\begin{defn}
A module $M$ is called \emph{Schurian} (or a \emph{brick}) if its endomorphism ring $\End_\Lambda(M)$ is a division algebra. Given such a module, the \emph{stability set} ${\bf D}_\Lambda(M)$ (also called \emph{semi-invariant domain} \cite{IOTW2}) is the subset of $\RR^n$ given by
\[
	{\bf D}_\Lambda(M):=\{x\in\RR^n\,|\, x\cdot \undim M=0,\, x\cdot \undim M'\le 0\,\text{ for all }M'\subsetneq M\}.
\]
$M$ is called \emph{exceptional} if it is Schurian and rigid (\emph{rigid} means without self-extensions). In the case when $\Lambda$ is the path algebra $\Lambda=KQ$ of a Dynkin quiver $Q$, all indecomposable modules are exceptional.
\end{defn}

These sets ${\bf D}_\Lambda(M)$, sometimes called ``walls'', play a key role in visualizing cluster-tilting objects in the cluster category of $\Lambda$. These walls divide $\RR^n$ into compartments corresponding to clusters and two adjacent compartments differ by a single mutation. I will explain this with an example.

\begin{eg}\label{eg: A2 example, part 1}
Let $\Lambda=KQ$ where $Q$ is the quiver of type $A_2$: $1\leftarrow 2$ and $f_1=f_2=1$. There are three indecomposable modules arranged in the Auslander-Reiten quiver by:
\[
\xymatrixrowsep{15pt}\xymatrixcolsep{10pt}
\xymatrix{
& P_2 \ar[rd]\\
	S_1 \ar[ru] \ar@{--}[rr]&&
	S_2
	}
\]
The stability sets ${\bf D}_\Lambda(S_1), {\bf D}_\Lambda(S_1)\subset \RR^2$ are the $y$ and $x$-axes respectively:
\[
	{\bf D}_\Lambda(S_1)=\{(x,y)\in \RR^2\,|\, (x,y)\cdot \undim S_1=(x,y)\cdot(1,0)=x=0\}=(1,0)^\perp
\]
\[
	{\bf D}_\Lambda(S_2)=\{(x,y)\in \RR^2\,|\, (x,y)\cdot \undim S_2=y=0\}=(0,1)^\perp.
\]
However, ${\bf D}_\Lambda(P_2)$ is only part of the hyperplane $(1,1)^\perp$ since $S_1\subset P_2$:
\[
	{\bf D}_\Lambda(P_2)=\{(x,y)\in \RR^2\,|\, (x,y)\cdot \undim P_2=x+y=0,\ (x,y)\cdot \undim S_1=x\le0\}.
\]
These walls are indicated in Figure \ref{A2 example 1 fig a}. This figure also includes the position of the $g$-vectors of the five indecomposable objects $P_1=S_1,P_2,S_2,P_1[1],P_2[1]$ of the cluster category of $\Lambda$.
\end{eg}

\subsection{Cluster category}

We recall \cite{BMRRT} that the \emph{cluster category} $\cC_\Lambda$ of $\Lambda$ is the orbit category of the bounded derived category of $mod\text-\Lambda$ by the functor $F=\tau^{-1}[1]$:
\[
	\cC_\Lambda=\cD^b(mod\text-\Lambda)/\tau^{-1}[1]
.\]
The indecomposable objects of $\cC_\Lambda$ are represented by objects in $\cD^b(mod\text-\Lambda)$ which are either indecomposable modules $M$ or shifted indecomposable projective modules $P_i[1]$.

\begin{defn}
A \emph{cluster-tilting object} in $\cC_\Lambda$ is defined to be an object $T\in \cC_\Lambda$ with $n$ nonisomorphic components $T=\bigoplus T_i$ so that $\Ext^1(T,T)=0$ in $\cC_\Lambda$. This is equivalent to saying that $T_i$ are either indecomposable modules, say $M_1,\cdots,M_m$, or shifted projective modules, say $P_{j_{m+1}}[1],\cdots,P_{j_{n}}[1]$, so that $\Ext_\Lambda^1(M_k,M_\ell)$ for all $k,l\le m$ and $\Hom_\Lambda(P_{j_i},M_k)=0$ for all $k\le m<i$. The module $M=M_1\oplus\cdots\oplus M_m$ is called a \emph{support tilting module}.
\end{defn}

Cluster-tilting objects and stability sets are related by the following results \cite{IOTW2}.

\begin{lem}
For exceptional modules $X,M$, $Dg(X)\in {\bf D}_\Lambda(M)$ if and only if
\[
	\Hom_\Lambda(X,M)=0=\Ext_\Lambda^1(X,M).
\]
Also, $-Dg(P_i)=Dg(P_i[1])\in {\bf D}_\Lambda(M)$ if and only if $\Hom_\Lambda(P_i,M)=0$.
\end{lem}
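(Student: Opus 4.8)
The plan is to translate both conditions defining $\mathbf{D}_\Lambda(M)$ directly into homological vanishing statements through the Euler--Ringel identity $Dg(X)\cdot\undim Y=\dim_K\Hom_\Lambda(X,Y)-\dim_K\Ext^1_\Lambda(X,Y)$, exploiting that $\Lambda$ is hereditary (so the long exact $\Hom$--$\Ext^1$ sequence attached to a short exact sequence has only six terms) and that $X$ is rigid (so $\Ext^1_\Lambda(X,X)=0$). Throughout I abbreviate the pairing as $\langle X,Y\rangle:=Dg(X)\cdot\undim Y$, which is additive on short exact sequences in the second variable.

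For the forward implication, suppose $\Hom_\Lambda(X,M)=0=\Ext^1_\Lambda(X,M)$. Then $\langle X,M\rangle=0$, giving the hyperplane condition. For a proper submodule $M'\subsetneq M$ I would apply $\Hom_\Lambda(X,-)$ to $0\to M'\to M\to M/M'\to 0$; since both $\Hom_\Lambda(X,M)$ and $\Ext^1_\Lambda(X,M)$ vanish, the six-term sequence forces $\Hom_\Lambda(X,M')=0$ and $\Hom_\Lambda(X,M/M')\cong\Ext^1_\Lambda(X,M')$, whence $\langle X,M'\rangle=-\dim_K\Ext^1_\Lambda(X,M')\le 0$. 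This is exactly the inequality condition, so $Dg(X)\in\mathbf{D}_\Lambda(M)$.

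The converse is the crux. Assuming $Dg(X)\in\mathbf{D}_\Lambda(M)$, I want to rule out any nonzero $f\colon X\to M$. The key observation is that the image $N:=\im f\subseteq M$ satisfies $\langle X,N\rangle>0$: applying $\Hom_\Lambda(X,-)$ to $0\to\ker f\to X\to N\to 0$ and using $\Ext^2_\Lambda(X,\ker f)=0$ together with rigidity $\Ext^1_\Lambda(X,X)=0$ yields $\Ext^1_\Lambda(X,N)=0$, while $f$ itself witnesses $\Hom_\Lambda(X,N)\neq 0$. Now the dichotomy does the work: if $f$ is not surjective then $N$ is a proper submodule with $\langle X,N\rangle>0$, contradicting the inequality condition; if $f$ is surjective then $N=M$ and $\langle X,M\rangle>0$, contradicting the hyperplane condition $\langle X,M\rangle=0$. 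Either way we reach a contradiction, so $\Hom_\Lambda(X,M)=0$, and then $\langle X,M\rangle=0$ forces $\Ext^1_\Lambda(X,M)=0$. The main obstacle is recognizing that a single image argument settles both the surjective and non-surjective cases at once, with rigidity of $X$ being precisely what guarantees $\Ext^1_\Lambda(X,N)=0$.

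For the second statement I would note that $P_i$ is projective, so $\Ext^1_\Lambda(P_i,-)=0$ and $\langle P_i,Y\rangle=\dim_K\Hom_\Lambda(P_i,Y)$ for every $Y$. Hence $Dg(P_i[1])\cdot\undim Y=-\dim_K\Hom_\Lambda(P_i,Y)\le 0$ automatically, so the inequality condition over all proper submodules holds with no hypothesis; the only constraint is the hyperplane equality $Dg(P_i[1])\cdot\undim M=-\dim_K\Hom_\Lambda(P_i,M)=0$, equivalent to $\Hom_\Lambda(P_i,M)=0$. This case is therefore immediate and requires neither the six-term sequence nor the image argument, only $\Ext^1_\Lambda(P_i,-)=0$.
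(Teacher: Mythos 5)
Your proof is correct, but there is no in-paper argument to compare it against: the paper states this lemma as a quoted result, introduced by ``Cluster-tilting objects and stability sets are related by the following results \cite{IOTW2},'' and gives no proof, deferring entirely to \cite{IOTW2}. Your argument is a clean, self-contained substitute, and its key step is sound. The forward direction is the routine six-term exact sequence computation (heredity truncates the long exact sequence, and $\Hom_\Lambda(X,M)=0$ already forces $\Hom_\Lambda(X,M')=0$, hence $Dg(X)\cdot\undim M'=-\dim_K\Ext^1_\Lambda(X,M')\le 0$). The real content is your converse, and the image argument is exactly right: for a nonzero $f\colon X\to M$ with $N=\im f$, exactness of
\[
\Ext^1_\Lambda(X,X)\to \Ext^1_\Lambda(X,N)\to \Ext^2_\Lambda(X,\ker f)=0
\]
shows $\Ext^1_\Lambda(X,N)$ is a quotient of $\Ext^1_\Lambda(X,X)=0$, so $Dg(X)\cdot\undim N=\dim_K\Hom_\Lambda(X,N)>0$, contradicting the inequality condition when $N\subsetneq M$ and the hyperplane condition when $N=M$. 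Note that your proof uses only rigidity of $X$ and heredity of $\Lambda$ (the Schurian hypotheses on $X$ and $M$ are never needed), so it is in fact slightly more general than the statement; the treatment of the shifted projectives, where projectivity makes the inequality conditions automatic and leaves only the hyperplane equality, is also correct.
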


\begin{thm}\label{thm: cluster picture}
Let $T=T_1\oplus \cdots\oplus T_n$ be a cluster-tilting object in $\cC_\Lambda$. Then, there are unique exceptional modules $X_1,\cdots,X_n$ with the property that $Dg(T_i)\in {\bf D}_\Lambda(X_j)$ if and only if $i\neq j$. Futhermore, any positive linear combination of the $g$-vectors $Dg(T_i)$ does not lie in any ${\bf D}_\Lambda(M)$.
\end{thm}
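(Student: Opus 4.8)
The plan is to translate the whole statement into the Hom/Ext vanishing language of the preceding Lemma and then treat the two assertions separately. By that Lemma, for an exceptional module $X_i$ the condition $Dg(T_j)\in{\bf D}_\Lambda(X_i)$ is equivalent to $\Hom_\Lambda(T_j,X_i)=0=\Ext^1_\Lambda(T_j,X_i)$ when $T_j$ is a module, and to $\Hom_\Lambda(P_k,X_i)=0$ when $T_j=P_k[1]$. Thus the first assertion says exactly that for each $i$ there is a unique brick $X_i$ perpendicular (in this sense) to all of $\{T_j:j\neq i\}$ but not to $T_i$. Note that the inequality part of the definition of ${\bf D}_\Lambda$ is already absorbed into the Lemma, so no separate verification of the wall inequalities is needed.

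For existence and uniqueness I would fix $i$ and let $\cW_i\subset mod\text-\Lambda$ be the full subcategory of modules $N$ with $\Hom_\Lambda(T_j,N)=\Ext^1_\Lambda(T_j,N)=0$ for every module summand $T_j$ with $j\neq i$, and $\Hom_\Lambda(P_k,N)=0$ for every summand $T_j=P_k[1]$ with $j\neq i$. The conditions of the second kind merely force the support of $N$ to avoid the corresponding projective indices, i.e. $N$ is a module over a hereditary quotient $\Lambda'$, and over $\Lambda'$ the surviving summands $T_j$ ($j\neq i$) form an almost complete tilting object. By the theory of perpendicular categories (Geigle--Lenzing, Schofield), $\cW_i$ is then a hereditary abelian subcategory of rank $1$; such a category has a single simple object $X_i$, which is a brick and, being extension-closed in $mod\text-\Lambda$, is exceptional. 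The Lemma gives $Dg(T_j)\in{\bf D}_\Lambda(X_i)$ for all $j\neq i$, and any brick with this property lies in $\cW_i$, hence is isomorphic to $X_i$, which is uniqueness. Finally $Dg(T_i)\notin{\bf D}_\Lambda(X_i)$: otherwise $X_i$ would be perpendicular to all of $T$, and maximality of the cluster-tilting object forces that perpendicular category to vanish, contradicting $X_i\neq 0$.

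For the ``furthermore'' statement I would use the torsion pair $(\cT,\cF)$ associated to $T$, with $\cT=\operatorname{Fac}(T^{\mathrm{mod}})$ where $T^{\mathrm{mod}}$ is the sum of the module summands of $T$. Writing $\theta_x(N):=x\cdot\undim N$ for $x=\sum_i\lambda_i\,Dg(T_i)$ with all $\lambda_i>0$, I claim that $\theta_x(N)>0$ for every nonzero $N\in\cT$ and $\theta_x(N)<0$ for every nonzero $N\in\cF$. Granting this, suppose some brick $M$ had $x\in{\bf D}_\Lambda(M)$; then $\theta_x(M)=0$ and $\theta_x(M')\le 0$ for every submodule $M'$. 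Applying this to the torsion submodule $tM\in\cT$ and invoking the claimed positivity forces $tM=0$, so $M\in\cF$ and therefore $\theta_x(M)<0$, contradicting $\theta_x(M)=0$. Hence no wall passes through the open cone.

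The genuinely hard step is this positivity claim, which amounts to saying that the open simplicial cone spanned by the $Dg(T_i)$ is precisely the stability cone of the torsion class $\cT$. I would attack it through the dual-basis relation produced in the first part, namely $Dg(T_j)\cdot\undim X_i=0$ for $j\neq i$ and $\neq 0$ for $j=i$: this identifies the $n$ facets of the cone with the walls ${\bf D}_\Lambda(X_i)$ and fixes on which side of each the cone sits, and translating these sign conditions through the Euler--Ringel pairing and the decomposition of $\cT$, $\cF$ governed by the bricks $X_i$ should yield the required signs of $\theta_x$. This is exactly where the structural results of \cite{IOTW2} on the semi-invariant picture enter, and it is the main obstacle; everything else is either the Lemma or standard perpendicular-category theory.
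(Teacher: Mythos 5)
The paper's own ``proof'' of this theorem is pure citation (the first statement to \cite{IOTW2}, the second to \cite{PartI}, Lemma A), so your attempt at a self-contained argument is necessarily a different route. Your first half succeeds: translating the wall conditions through the Lemma, realizing $\cW_i$ as a perpendicular category of rank one whose unique brick is $X_i$, and getting uniqueness from the fact that the only brick in $\operatorname{mod}\text{-}D$ is the simple object is a correct and complete argument. Two small repairs: (i) the rank count via Geigle--Lenzing needs the remark that the indecomposable summands of a rigid module over a hereditary algebra can be ordered into an exceptional sequence; (ii) ``maximality of the cluster-tilting object'' is not quite the right invocation for $Dg(T_i)\notin{\bf D}_\Lambda(X_i)$ --- maximal rigidity in $\cC_\Lambda$ would require Ext-vanishing in both directions, which you do not have. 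What your framework actually gives is better: a module Hom-and-Ext-perpendicular to every module summand of $T$ and supported off the shifted-projective vertices is perpendicular to a tilting module over its support algebra, hence zero (Bongartz: $\Ext^1(M,N)=0$ forces $N\in\operatorname{Gen}(M)$). State it that way and the step is airtight.

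The genuine gap is in the second half, and you flag it yourself: the sign claim ($\theta_x>0$ on $\cT\setminus\{0\}$, $\theta_x<0$ on $\cF\setminus\{0\}$) \emph{is} the content of the ``furthermore'' statement --- it is exactly what \cite{PartI}, Lemma A asserts --- and ``translating these sign conditions \dots should yield the required signs'' is a hope, not a proof. The good news is that the gap closes with tools you already used, with no appeal to the structure theory of \cite{IOTW2}. Write $\theta_x(N)=\sum_i\lambda_i\,Dg(T_i)\cdot\undim N$ and expand each term by the Euler--Ringel pairing: $Dg(T_i)\cdot\undim N=\dim\Hom_\Lambda(T_i,N)-\dim\Ext^1_\Lambda(T_i,N)$ for module summands, and $Dg(P_k[1])\cdot\undim N=-\dim\Hom_\Lambda(P_k,N)$ for shifted ones. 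If $0\neq N\in\cT=\operatorname{Gen}(T^{\mathrm{mod}})$, choose a surjection $(T^{\mathrm{mod}})^r\onto N$; exactness of $\Hom(P_k,-)$ plus $\Hom(P_k,T^{\mathrm{mod}})=0$ kills every shifted-projective term, and right exactness of $\Ext^1_\Lambda(T_i,-)$ (hereditary!) plus rigidity kills every $\Ext^1$ term, so $\theta_x(N)=\sum\lambda_i\dim\Hom(T_i,N)>0$ because the chosen surjection is nonzero. If $0\neq N\in\cF$, all Hom terms from module summands vanish by definition of $\cF$, so $\theta_x(N)\le 0$, and equality would place $N$ in the perpendicular category of all of $T$, which is zero by the very fact you established in part one; hence $\theta_x(N)<0$. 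Your torsion-submodule trick then finishes the proof exactly as you wrote it.
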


\begin{proof} The first statement is well-known \cite{IOTW2}. The second statement is \cite{PartI}, Lemma A.
\end{proof}

Theorem \ref{thm: cluster picture} has the following interpretation. Given a cluster-tilting object $T=\bigoplus T_i$, the set of nonnegative linear combinations $\sum a_iDg(T_i)$, $a_i\ge0$ is bounded by $n$ walls ${\bf D}_\Lambda(X_i)$ and no walls meet the interior of this region. For example, in Figure \ref{A2 example 1 fig a}, the five regions correspond to the 5 cluster-tilting objects (ordered counterclockwise from lower left):
\[
	P_1[1]\oplus P_2[1],\quad P_2[1]\oplus P_1,\quad P_1\oplus P_2,\quad P_2 \oplus S_2,\quad S_2\oplus P_1[1].
\]

\begin{figure}[htbp] 
\begin{center}
\begin{tikzpicture}
\clip (-6,-2.3) rectangle (6,2.3);
\coordinate (S1) at (1.7,0);
\coordinate (S1s) at (-1.7,0);
\coordinate (S2) at (-1.7,1.7);
\coordinate (P2) at (0,1.7);
\coordinate (P2s) at (0,-1.7);
\draw[fill] (S1) circle[radius=3pt] node[below]{$g(P_1)$};
\draw[fill] (S1s) circle[radius=3pt] node[below]{$g(P_1[1])$};
\draw[fill] (S2) circle[radius=3pt] node[left]{$g(S_2)$};
\draw[fill] (P2) circle[radius=3pt] node[right]{$g(P_2)$};
\draw[fill] (P2s) circle[radius=3pt] node[right]{$g(P_2[1])$};
\begin{scope}
	\draw[thick] (-4,0)--(4,0) (-4,0) node[above]{${\bf D}_\Lambda(S_2)=(0,1)^\perp$};
	\draw[thick] (0,-3)--(0,3) (0,2.6) node[right]{${\bf D}_\Lambda(S_1)=(1,0)^\perp$};
	\draw[thick] (0,0)--(-2.5,2.5) node[left]{${\bf D}_\Lambda(P_2)$};
\end{scope}
\end{tikzpicture}
\caption{$g$-vectors of the components of each cluster-tilting object lie on the boundary of the corresponding region. For example, the upper right region corresponds to $P_1\oplus P_2$ with $g(P_1)$ on one wall and $g(P_2)$ on the other. In this example, $D=I_2$. So, $g(X)=Dg(X)$.}
\label{Figure11}\label{A2 example 1 fig a}
\end{center}
\end{figure}
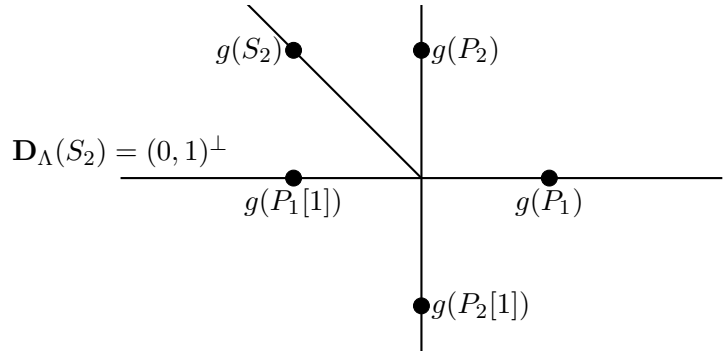
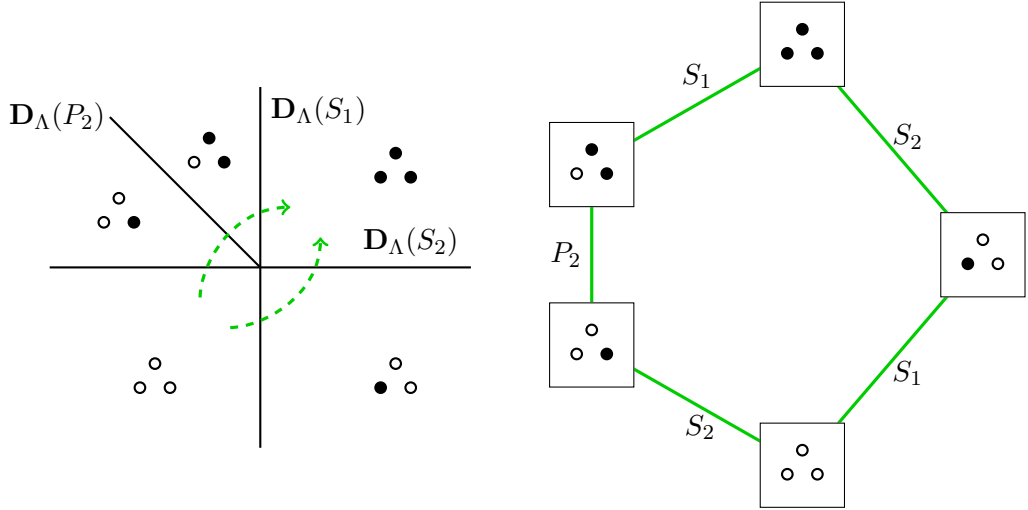
\begin{figure}[htbp]
\begin{tikzpicture}[scale=.8]
\clip (-4.3,-4) rectangle (4.5,3.1);
\begin{scope}
	\draw[thick] (-3.5,0)--(3.5,0) (2.5,0) node[above]{${\bf D}_\Lambda(S_2)$};
	\draw[thick] (0,-3)--(0,3) (0,2.6) node[right]{${\bf D}_\Lambda(S_1)$};
	\draw[thick] (0,0)--(-2.5,2.5) (-2.4,2.5) node[left]{${\bf D}_\Lambda(P_2)$};
\end{scope}
\begin{scope}[scale=.5,xshift=-4cm,yshift=-4cm]
	\draw[thick] (0,0) circle[radius=5pt];
	\draw[thick] (0.5,.8) circle[radius=5pt];
	\draw[thick] (1,0) circle[radius=5pt];
	\draw[very thick,dashed, color=green!80!black,->] (3,2) .. controls (4,2) and (6,3) .. (6,5);
	\draw[very thick,dashed, color=green!80!black,->] (2,3) .. controls (2,4) and (3,6) .. (5,6);
\end{scope}
\begin{scope}[scale=.5,xshift=4cm,yshift=-4cm]
	\draw[thick,fill] (0,0) circle[radius=5pt];
	\draw[thick] (0.5,.8) circle[radius=5pt];
	\draw[thick] (1,0) circle[radius=5pt];
\end{scope}
\begin{scope}[scale=.5,xshift=4cm,yshift=3cm]
	\draw[thick,fill] (0,0) circle[radius=5pt];
	\draw[thick,fill] (0.5,.8) circle[radius=5pt];
	\draw[thick,fill] (1,0) circle[radius=5pt];
\end{scope}
\begin{scope}[scale=.5,xshift=-2.2cm,yshift=3.5cm]
	\draw[thick] (0,0) circle[radius=5pt];
	\draw[thick,fill] (0.5,.8) circle[radius=5pt];
	\draw[thick,fill] (1,0) circle[radius=5pt];
\end{scope}
\begin{scope}[scale=.5,xshift=-5.2cm,yshift=1.5cm]
	\draw[thick] (0,0) circle[radius=5pt];
	\draw[thick] (0.5,.8) circle[radius=5pt];
	\draw[thick,fill] (1,0) circle[radius=5pt];
\end{scope}
\end{tikzpicture}
\begin{tikzpicture}[scale=.8]
\coordinate (S1) at (1.7,0);
\coordinate (S2) at (-1.7,1.7);
\coordinate (P2) at (0,1.7);
\coordinate (A) at (0,0);
\coordinate (B) at (-3.5,2);
\coordinate (C) at (-3.5,5);
\coordinate (D) at (3,3.5);
\coordinate (E) at (0,7);
\begin{scope}[xshift=.7cm,yshift=.7cm]
\draw[very thick,color=green!80!black] (0,0)--(-3.5,2)--(-3.5,5)--(0,7)--(3,3.5)--(0,0);
\draw (-1.7,1)node[below]{$S_2$};
\draw (-3.5,3.5)node[left]{$P_2$};
\draw (-1.75,6.1)node[above]{$S_1$};
\draw (1.75,1.9)node[below]{$S_1$};
\draw (1.75,5.1)node[above]{$S_2$};
\end{scope}
\foreach\x in {A,B,C,D,E} \draw[fill,color=white] (\x) rectangle +(1.4,1.4);
\foreach\x in {A,B,C,D,E} \draw (\x) rectangle +(1.4,1.4);
\begin{scope}[xshift=.1cm,yshift=.2cm] 
\begin{scope}[scale=.5,xshift=.7cm,yshift=.7cm] 
	\draw[thick] (0,0) circle[radius=5pt];
	\draw[thick] (0.5,.8) circle[radius=5pt];
	\draw[thick] (1,0) circle[radius=5pt];
\end{scope}
\begin{scope}[scale=.5,xshift=6.7cm,yshift=7.7cm] 
	\draw[thick,fill] (0,0) circle[radius=5pt];
	\draw[thick] (0.5,.8) circle[radius=5pt];
	\draw[thick] (1,0) circle[radius=5pt];
\end{scope}
\begin{scope}[scale=.5,xshift=.7cm,yshift=14.7cm] 
	\draw[thick,fill] (0,0) circle[radius=5pt];
	\draw[thick,fill] (0.5,.8) circle[radius=5pt];
	\draw[thick,fill] (1,0) circle[radius=5pt];
\end{scope}
\begin{scope}[scale=.5,xshift=-6.3cm,yshift=10.7cm] 
	\draw[thick] (0,0) circle[radius=5pt];
	\draw[thick,fill] (0.5,.8) circle[radius=5pt];
	\draw[thick,fill] (1,0) circle[radius=5pt];
\end{scope}
\begin{scope}[scale=.5,xshift=-6.3cm,yshift=4.7cm] 
	\draw[thick] (0,0) circle[radius=5pt];
	\draw[thick] (0.5,.8) circle[radius=5pt];
	\draw[thick,fill] (1,0) circle[radius=5pt];
\end{scope}
\end{scope}
\end{tikzpicture}
\caption{The poset of torsion classes on the right is dual to the semi-invariant picture on the left. The wall ${\bf D}_\Lambda(M)$ separates two regions on the left iff the brick $M$ labels the corresponding edge of the Hasse diagram. Maximal green sequences for $\Lambda$ are maximal chains in the poset, equivalent to ``green paths'' from lower left to upper right in the picture. See \cite{Woolf}.}
\label{Figure12}\label{A2 example 1 fig b}
\end{figure}
To understand maximal green sequences (green dashed arrows in Figure \ref{A2 example 1 fig b}), it helps to know that cluster-tilting objects for $\Lambda$ are in bijection with certain torsion classes. 

\begin{defn}
A \emph{torsion class} for $\Lambda$ is a full subcategory $\cG$ of $mod\text-\Lambda$ with the property that $\cG$ is closed under extension and quotient objects. We say that $\cG$ is \emph{finitely generated} if there exists a single module $G$ in $\cG$ so that $\cG$ is the class of all quotients of direct sums of $G$. This conditions is often stated as ``$\cG$ is \emph{functorially finite}'' which means that the inclusion functor $\cG\hookrightarrow mod\text-\Lambda$ has both a left and right adjoint.
\end{defn}

The condition of being closed under extensions and quotients implies that the ``trace'' of $\cG$ in any module $M$ is also an object of $\cG$. This is the sum of all images of all maps $X\to M$ where $X\in\cG$. This gives a functorial short exact sequence:
\[
	0\to A\to M\to B\to 0
\]
where $A\in \cG$ and $B\in\cF$ where
\[
	\cF:=\{B\in mod\text-\Lambda\,|\, \Hom_\Lambda(X,B)=0, \forall X\in\cG\}.
\]
This implies that $\cG\hookrightarrow mod\text-\Lambda$ always has a right adjoint, namely, $M\mapsto A$. 

For $\Lambda$ of finite type, all torsion classes are clearly finitely generated. For $\Lambda$ of infinite type, the full subcategory of preinjective $\Lambda$-modules is a torsion class which is not finitely generated.

\begin{thm}\cite{ST}
There is a 1-1 correspondence between (isomorphism classes of) cluster-tilting objects in $\cC_\Lambda$ and functorially finite torsion classes for $\Lambda$.
\end{thm}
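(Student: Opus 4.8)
The plan is to factor the stated bijection through support tilting modules, realizing it as the composite of two correspondences
\[
\{\text{cluster-tilting objects in }\cC_\Lambda\}\ \longleftrightarrow\ \{\text{support tilting }\Lambda\text{-modules}\}\ \longleftrightarrow\ \{\text{f.f.\ torsion classes}\}.
\]
The first correspondence is essentially the definition recalled above: a cluster-tilting object $T=\bigoplus T_i$ decomposes as a module part $M=M_1\oplus\cdots\oplus M_m$ together with shifted projectives $P_{j_{m+1}}[1],\dots,P_{j_n}[1]$, and the condition $\Hom_\Lambda(P_{j_i},M_k)=0$ says precisely that each $j_i$ lies outside the support of $M$. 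First I would check that $M$ is a support tilting module and that the shifted-projective part is forced by the set of vertices missing from the support; this makes $T\mapsto M$ a bijection onto support tilting modules, and is the purely formal step.

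The content lies in the second correspondence. I would define the forward map by $M\mapsto \cG(M):=\operatorname{Gen}(M)$, the class of quotients of finite direct sums of $M$, and the backward map by $\cG\mapsto P(\cG)$, the direct sum of the indecomposable \emph{$\Ext$-projective} objects of $\cG$, i.e.\ those $P\in\cG$ with $\Ext^1_\Lambda(P,\cG)=0$. The first things to verify are that each map lands where claimed: that $\operatorname{Gen}(M)$ is a torsion class (closure under quotients is immediate, and closure under extensions follows from the rigidity $\Ext^1_\Lambda(M,M)=0$ together with the hereditary hypothesis), and that it is functorially finite since it is by construction generated by the single module $M$, matching the ``finitely generated'' description given above. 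Conversely, for a functorially finite torsion class $\cG=\operatorname{Gen}(G)$ I would produce $P(\cG)$ using the minimal approximations guaranteed by functorial finiteness, and show that $P(\cG)$ is rigid, that $\operatorname{Gen}(P(\cG))=\cG$, and that the number of indecomposable summands of $P(\cG)$ equals the number of vertices in the support of $\cG$, so that $P(\cG)$ is support tilting.

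Finally I would check that the two composites are identities: $\operatorname{Gen}(P(\cG))=\cG$ (already part of the previous step) and $P(\operatorname{Gen}(M))=\add M$, the latter amounting to the assertion that the $\Ext$-projective objects of $\operatorname{Gen}(M)$ are exactly the summands of $M$. The main obstacle is the existence and correct counting of $\Ext$-projectives for an arbitrary functorially finite torsion class: one must show that such a class has \emph{enough} relative projectives and that they assemble into a single module with exactly the expected number of indecomposable summands. This is the part that genuinely uses Auslander--Smal\o\ theory of functorially finite subcategories (existence of minimal approximations and relative projective covers) rather than formal manipulation, and it is where I would concentrate the effort; the remaining verifications are bookkeeping with the trace short exact sequence $0\to A\to M\to B\to 0$ recalled above.
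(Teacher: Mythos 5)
The paper does not prove this theorem: it is stated as a quotation of Speyer--Thomas \cite{ST}, so there is no internal argument to measure your proposal against. On its own merits, your outline is correct, and it is the standard proof for hereditary algebras (essentially the route of Ingalls--Thomas, recast in $\tau$-tilting language by Adachi--Iyama--Reiten): factor through support tilting modules, send $M\mapsto \operatorname{Gen}(M)$, and recover $M$ from a functorially finite torsion class $\cG$ as the sum $P(\cG)$ of its indecomposable Ext-projectives. Your placement of the difficulty is also right: the first bijection is bookkeeping once one observes that maximality forces the shifted summands to be exactly the $P_j[1]$ for $j$ outside the support of $M$ and forces $M$ to be tilting over its support algebra; extension-closure of $\operatorname{Gen}(M)$ follows from rigidity plus heredity (pull back $M^k\onto B$ along $E\onto B$ and use right-exactness of $\Ext^1_\Lambda(M^k,-)$ to split the resulting extension); the equivalence of ``finitely generated'' with ``functorially finite'' is Smal{\o}'s theorem, which this paper builds into its definition of torsion classes; and the existence, generation property, and counting of Ext-projectives is exactly Auslander--Smal{\o} approximation theory, as you say. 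One point to make explicit in a full write-up: the identity $P(\operatorname{Gen}(M))=\add M$ genuinely uses that $M$ is support tilting rather than merely rigid. For instance, in the paper's running $A_2$ example, $\operatorname{Gen}(P_2)=\add(P_2\oplus S_2)$ and \emph{both} $P_2$ and $S_2$ are Ext-projective in it, so the Ext-projectives of $\operatorname{Gen}(M)$ can strictly contain $\add M$ for a rigid non-maximal $M$; the count of indecomposable summands matching the size of the support is what rules this out.
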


The following chart \eqref{eq: chart for A2} gives the bijection in our example $A_2:1\leftarrow 2$.
\begin{equation}\label{eq: chart for A2}
\begin{array}{|c|c|c|}
\hline
\text{cluster tilting object} & \text{torsion class} & \text{spot diagram}\\
 \hline
 P_1[1]\oplus P_2[1] & 0 & %
\begin{tikzpicture}[scale=.5]
	\draw[thick] (0,0) circle[radius=5pt];
	\draw[thick] (0.5,.8) circle[radius=5pt];
	\draw[thick] (1,0) circle[radius=5pt];
\clip (-.5,-.3) rectangle (1.5,1.3);
\end{tikzpicture}\\
\hline
 P_2[1]\oplus P_1 & Gen\,S_1 & \begin{tikzpicture}[scale=.5]
	\draw[thick,fill] (0,0) circle[radius=5pt];
	\draw[thick] (0.5,.8) circle[radius=5pt];
	\draw[thick] (1,0) circle[radius=5pt];
\clip (-.5,-.3) rectangle (1.5,1.3);
\end{tikzpicture}\\
\hline
 P_1\oplus P_2 & mod\text-\Lambda &\begin{tikzpicture}[scale=.5]
	\draw[thick,fill] (0,0) circle[radius=5pt];
	\draw[thick,fill] (0.5,.8) circle[radius=5pt];
	\draw[thick,fill] (1,0) circle[radius=5pt];
\clip (-.5,-.3) rectangle (1.5,1.3);
\end{tikzpicture}\\
\hline
 P_2 \oplus S_2 & Gen\,P_2 & \begin{tikzpicture}[scale=.5]
	\draw[thick] (0,0) circle[radius=5pt];
	\draw[thick,fill] (0.5,.8) circle[radius=5pt];
	\draw[thick,fill] (1,0) circle[radius=5pt];
\clip (-.5,-.3) rectangle (1.5,1.3);
\end{tikzpicture}\\
\hline
S_2\oplus P_1[1] & Gen\,S_2 & \begin{tikzpicture}[scale=.5]
	\draw[thick] (0,0) circle[radius=5pt];
	\draw[thick] (0.5,.8) circle[radius=5pt];
	\draw[thick,fill] (1,0) circle[radius=5pt];
\clip (-.5,-.3) rectangle (1.5,1.3);
\end{tikzpicture}\\
\hline
 \end{array}
\end{equation}

In this chart, $Gen\,M$ is the torsion class generated by $M$, i.e., the class of all quotients of $M^k$ for all $k$. The spot diagrams, also used by Iyama in his lecture, indicate the indecomposable objects (dark spots) which lie in the torsion class. For example, 

\begin{center}
\begin{tikzpicture}[scale=.7]
	\draw[thick] (0,0) circle[radius=5pt];
	\draw[thick,fill] (0.5,.8) circle[radius=5pt];
	\draw[thick,fill] (1,0) circle[radius=5pt];
\end{tikzpicture}
\end{center}
Indicates the torsion class with indecomposable objects $P_2,S_2$ excluding $S_1=P_1$.

\subsection{Maximal green sequences}

We give several equivalent definitions of a maximal green sequence. For historical remarks including the relationship between MGSs, quantum dilogarithms and Bridgeland stability conditions, see Section \ref{quantum} at the end of these notes.

\begin{thm}\label{thm: def of MGS}
Let $\Lambda$ be a finite dimensional hereditary algebra over a field $K$ of rank $n$.
Let $\beta_1,\cdots,\beta_m\in \NNN^n$. Then the following are equivalent.
\begin{enumerate}
\item $\beta_1,\cdots,\beta_m$ are the $c$-vectors of a maximal green sequence for $\Lambda$ defined by a Fomin-Zelevinsky mutation sequence on positive $c$-vectors.
\item $\beta_i$ are (positive) real Schur roots and the unique modules $M_i$ with $\undim M_i=\beta_i$ have the following properties.
	\begin{enumerate}
		\item $\Hom_\Lambda(M_i,M_j)=0$ for $i<j$.
		\item $(M_i)$ is maximal with property (a), i.e., for any other indecomposable module $M$, there exist $ i<j$ so that $\Hom(M_i,M)\neq0$ and $\Hom(M,M_j)\neq0$.
	\end{enumerate}
\item There is a generic green path in $\RR^n$ which crosses the semi-invariant domains ${\bf D}_\Lambda(M_i)$ for $i=1,\cdots,m$ in increasing order of $i$.
\item There is a finite maximal chain in the poset of functorially finite torsion classes on $\Lambda$ labeled with bricks $M_1,\cdots,M_m$.
\end{enumerate}
\end{thm}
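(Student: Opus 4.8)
The plan is to prove the four conditions equivalent through a cycle of implications, using the geometric condition~(3) as a hub, since the semi-invariant picture simultaneously encodes the walls ${\bf D}_\Lambda(M_i)$ of (2) and (3), the compartments dual to the torsion classes of (4), and, through the $g$-vectors of cluster-tilting objects, the combinatorial data of (1). The main tools are the Lemma preceding Theorem~\ref{thm: cluster picture} (identifying $Dg(X)\in{\bf D}_\Lambda(M)$ with the vanishing of $\Hom$ and $\Ext^1$), Theorem~\ref{thm: cluster picture} itself (the compartment structure), the bijection of \cite{ST} between cluster-tilting objects and functorially finite torsion classes, and, for the combinatorial side, sign-coherence of $c$-vectors. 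Throughout, finiteness of $m$ is preserved by each implication, and the relevant $M_i$ are exceptional (bricks that are moreover rigid), automatic in the Dynkin case, so that the Lemma applies.

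First I would establish (3)~$\Leftrightarrow$~(4). By Theorem~\ref{thm: cluster picture}, each compartment, namely the nonnegative span of the vectors $Dg(T_i)$ of a cluster-tilting object $T$, is bounded by exactly $n$ walls ${\bf D}_\Lambda(X_i)$, and by \cite{ST} the compartments biject with functorially finite torsion classes. A generic green path runs from the compartment of the zero torsion class, spanned by the $Dg(P_i[1])=-f_ie_i$, to that of $mod\text-\Lambda$, spanned by the $Dg(P_i)=f_ie_i$; each single crossing of a wall ${\bf D}_\Lambda(M_i)$ moves the path to an adjacent compartment, hence across one covering relation of the torsion poset whose brick label is $M_i$. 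Genericity forces the crossings to occur one at a time, avoiding loci of codimension $\ge 2$, so the ordered list of crossings is precisely a finite maximal chain together with its brick labels, yielding both directions.

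Next I would prove (2)~$\Leftrightarrow$~(3) using the Lemma: $Dg(X)\in{\bf D}_\Lambda(M)$ iff $\Hom_\Lambda(X,M)=\Ext^1_\Lambda(X,M)=0$. Parametrizing the green path so that $\gamma(t)\cdot\undim M_i$ increases through zero at the crossing time $t_i$, the order $t_1<\cdots<t_m$ records the order of the $M_i$. Forward Hom-orthogonality $\Hom_\Lambda(M_i,M_j)=0$ for $i<j$ then follows from the submodule and quotient inequalities $x\cdot\undim M'\le 0$ defining the walls: a nonzero map $M_i\to M_j$ would place $M_i$ or its image among the sub/quotient constraints of the later wall and force the crossings into the opposite order. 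The maximality clause (2b) is the representation-theoretic shadow of the path reaching the final compartment, i.e.\ of the chain in (4) terminating at $mod\text-\Lambda$; conversely, from a maximal forward Hom-orthogonal family one builds the path by sweeping a generic direction compatible with the induced order on the walls.

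Finally I would prove (1)~$\Leftrightarrow$~(2), which I expect to be the main obstacle, since it requires translating the purely combinatorial Fomin--Zelevinsky $c$-vectors into representation theory. The essential inputs are sign-coherence of $c$-vectors and, for hereditary $\Lambda$, the identification of each positive $c$-vector with the dimension vector of a unique brick, a positive real Schur root; this is where the results of \cite{PartI} and \cite{IOTW2} and the Schurian/rigidity hypotheses enter. Under this dictionary a green mutation sequence, all $c$-vectors green until every one turns negative, becomes exactly a maximal chain of torsion classes, and the order in which the green $c$-vectors are exchanged reproduces both the Hom-orthogonality (2a) and the maximality (2b). The delicate step is matching the FZ exchange rule for $c$-vectors with the brick-label mutation rule along covering relations; I would verify this by induction on $m$, checking the single rank-two exchange against the explicit wall-crossing computation of Example~\ref{eg: A2 example, part 1} and then globalizing over the compartment structure supplied by Theorem~\ref{thm: cluster picture}.
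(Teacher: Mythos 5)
Your proposal has a genuine gap, and it sits exactly where the paper's own proof does its only real work. In your (3)$\Leftrightarrow$(4) step you assert that crossing the wall ${\bf D}_\Lambda(M_i)$ moves the path across a covering relation of the torsion poset ``whose brick label is $M_i$.'' That identification --- wall label equals Hasse-diagram brick label --- is not part of the Speyer--Thomas bijection \cite{ST} and does not follow from Theorem \ref{thm: cluster picture}; it is precisely the point the paper proves, the rest being citations ((1)$\Leftrightarrow$(2)$\Leftrightarrow$(3) is quoted from \cite{PartI}, and the equivalence with (4) \emph{without} labels from \cite{ST}). The paper's argument runs through Iyama's formula $B_k=T_k/\mathrm{rad}_{\End(T)}T_k$ for the brick label of an edge, where $T=\bigoplus T_i$ is the support tilting module of the relevant torsion class: from it one gets $\Hom_\Lambda(T_i,B_k)=0=\Ext^1_\Lambda(T_i,B_k)$ for $i\neq k$ and $\Ext^1(P_s[1],B_k)=0$ for $s$ outside the support of $T_k$, so ${\bf D}_\Lambda(B_k)$ contains the $n-1$ relevant $g$-vectors; by the uniqueness in Theorem \ref{thm: cluster picture} this forces ${\bf D}_\Lambda(B_k)={\bf D}_\Lambda(M_k)$, hence $B_k\in\add M_k$, and $B_k=M_k$ since $B_k$ is a brick. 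Without this argument (or a substitute), your cycle of implications only proves (4) up to an unidentified relabeling of the chain, which is strictly weaker than the theorem.

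The remaining legs, where you attempt to re-derive what the paper cites, are also too thin to close. For (1)$\Leftrightarrow$(2), ``check the rank-two exchange and globalize by induction on $m$'' is not a viable plan: matching the Fomin--Zelevinsky $c$-vector rule with the representation-theoretic data is the substance of \cite{PartI} and \cite{IOTW2} and goes through tropical duality and mutation of cluster-tilting objects, not through rank-two subquivers. In (2)$\Rightarrow$(3) the existence of a generic green path crossing the ${\bf D}_\Lambda(M_i)$ in the prescribed order is asserted by ``sweeping,'' which is a construction, not a proof. And in (3)$\Rightarrow$(2a) your orthogonality argument has a hole: a nonzero map $M_i\to M_j$ with image $W$ gives $\gamma(t_i)\cdot\undim W\ge 0$ (quotient condition at the wall of $M_i$) and $\gamma(t_j)\cdot\undim W\le 0$ (submodule condition at the wall of $M_j$), but since ${\bf D}_\Lambda(W)$ is in general a proper subset of the hyperplane $(\undim W)^\perp$, this sign change does not force the path to cross any wall in between, so no contradiction with greenness or with the assumed crossing order results without substantially more work.
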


By a \emph{generic green path} we mean a $C^1$ path $\gamma:\RR\to\RR^n$ which starts in the negative octant (all coordinates of $\gamma(t)$ are negative for $t<<0$), ends in the positive octant and crosses only finitely many walls ${\bf D}_\Lambda(M)$ at distinct times in the positive direction, i.e., so that $\frac{d\gamma}{dt}(t_0)\cdot \undim M>0$ when $\gamma(t_0)\in {\bf D}_\Lambda(M)$. See \cite{PartI}.

\begin{proof}
The equivalence between (1),(2),(3) is shown in \cite{PartI}. The equivalence of these with (4), leaving aside the brick labels for a moment, is well-known and due to Speyer and Thomas \cite{ST}. Figures similar to Figure \ref{Figure12} appear in \cite{Woolf}. The fact that the modules $M_k$ with $\undim M_k=\beta_k$ are the brick labels, call them $B_k$, of the corresponding edge in the Hasse diagram of functorially finite torsion classes follows from a lemma mentioned by Iyama in his lecture:
\[
	B_k=T_k/rad_{\End(T)}T_k.
\]	
Here $T=\bigoplus T_i$ is a support tilting module (same as support $\tau$-tilting module in the hereditary case) corresponding to the torsion class generated by the extension closure of $\{M_1,\cdots,M_k\}$. Since $T_k$ is an exceptional $\Lambda$-module, $rad_{\End(T)}T_k$ is equal to the trace in $T_k$ of the other components $T_i$, $i\neq k$ of $T$. This formula implies that $\Hom_\Lambda(T_i,B_k)=0$ for $i\neq k$. Also, $\Ext_\Lambda^1(T_i,B_k)=0$ for $i\neq k$ by right exactness of $\Ext_\Lambda^1$. Finally, the support of $B_k$ is contained in the support of $T_k$. So, $\Ext^1(P_s[1],B_k)=0$ for all $s$ not in the support of $T_k$. This implies the stability wall ${\bf D}_\Lambda(B_k)$ contains ($D$ times) the $n-1$ $g$-vectors of $T_i,P_s[1]$. (See \cite{IOTW2}.). Thus ${\bf D}_\Lambda(B_k)={\bf D}_\Lambda(M_k)$ making $B_k\in add\, M_k$. Since $B_k$ is a brick, it must be equal $M_k$.
\end{proof}

There are two very useful consequences of the wall-crossing description of a maximal green sequence. One is an observation due to Greg Muller \cite{Muller}:

\begin{cor} Let $\Lambda=KQ$ be a path algebra and let $\Lambda'$ be an algebra obtained from $\Lambda$ by deleting arrows from $KQ$ and adding relations. Then, any generic green path for $\Lambda$ is also a generic green path for $\Lambda'$. In particular, when $\Lambda'$ has no relations, the $c$-vectors of any maximal green sequence for $\Lambda$ give the $c$-vectors of a maximal green sequence for $\Lambda'$ when those vectors which are not real Schur roots of $\Lambda'$ are deleted.
\end{cor}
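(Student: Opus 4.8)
The plan is to exploit the fact that, since $\Lambda'$ is a quotient of $KQ=\Lambda$, every $\Lambda'$-module is canonically a $\Lambda$-module, so that $mod\text-\Lambda'$ is a full subcategory of $mod\text-\Lambda$. First I would record three elementary compatibilities for an object $N$ of $mod\text-\Lambda'$: (i) the simples $S_i$, and hence $\undim N$, are the same whether $N$ is regarded over $\Lambda$ or over $\Lambda'$; (ii) because the deleted arrows annihilate $N$, a subspace of $N$ is a $\Lambda$-submodule if and only if it is a $\Lambda'$-submodule; (iii) for the same reason $\End_\Lambda(N)=\End_{\Lambda'}(N)$, so $N$ is a $\Lambda$-brick iff it is a $\Lambda'$-brick. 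Feeding (i) and (ii) into the definition of the stability set shows that for \emph{every} $\Lambda'$-brick $N$ one has the exact equality ${\bf D}_{\Lambda'}(N)={\bf D}_\Lambda(N)$. The engine of the whole argument is the resulting conclusion that the collection of walls of $\Lambda'$ is literally a \emph{sub}-collection of the walls of $\Lambda$.

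Given this, the first assertion is nearly immediate. A generic green path $\gamma$ is defined purely in terms of the octants of $\RR^n$ (which see only the coordinates, i.e.\ the shared simples) together with the set of walls it meets; the clauses ``starts/ends in the negative/positive octant'' and ``crosses finitely many walls at distinct times in the positive direction $\frac{d\gamma}{dt}\cdot\undim M>0$'' all persist verbatim after discarding those walls of $\Lambda$ that are not walls of $\Lambda'$. A path that is generic with respect to the full $\Lambda$-arrangement is a fortiori generic with respect to any sub-arrangement, so I would simply observe that restricting from the $\Lambda$-wall system to its $\Lambda'$-subsystem preserves every clause, whence $\gamma$ is a generic green path for $\Lambda'$.

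For the ``in particular'' statement I would feed this into Theorem~\ref{thm: def of MGS}, now applied to the hereditary algebra $\Lambda'$ as well as to $\Lambda$. By the equivalence (1)$\Leftrightarrow$(3), a maximal green sequence for $\Lambda$ is the same data as a generic green path $\gamma$, whose $c$-vectors $\beta_1,\dots,\beta_m$ are the dimension vectors of the bricks $M_i$ with ${\bf D}_\Lambda(M_i)$ crossed, in order; applying the same theorem to $\Lambda'$ and the \emph{same} path $\gamma$ reads off the $c$-vectors of the resulting $\Lambda'$-sequence as the dimension vectors of the crossed $\Lambda'$-walls, again in order. Since the $\Lambda'$-walls form a subsystem crossed at a subset of the same times, the crossed $\Lambda'$-walls are exactly those ${\bf D}_\Lambda(M_i)$ that happen to be walls of $\Lambda'$, occurring in the same order; deleting the others from $(\beta_i)$ gives precisely the $c$-vectors of $\Lambda'$.

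The one step that needs a genuine argument, and which I expect to be the main obstacle, is identifying which $\beta_i$ survive, namely that ${\bf D}_\Lambda(M_i)$ is a wall of $\Lambda'$ exactly when $\beta_i$ is a real Schur root of $\Lambda'$. The direction ``$\Rightarrow$'' follows from the wall equality and Theorem~\ref{thm: def of MGS}(2) for $\Lambda'$. For ``$\Leftarrow$'', if $\beta_i$ is a real Schur root of $\Lambda'$ let $N$ be the exceptional $\Lambda'$-module with $\undim N=\beta_i$; viewed over $\Lambda$ it is a $\Lambda$-brick with $\undim N=\beta_i$. Because the Euler--Ringel pairing $\dim_K\Hom_\Lambda(-,-)-\dim_K\Ext^1_\Lambda(-,-)$ depends only on dimension vectors, and $\beta_i$ is a real Schur root of $\Lambda$ (so $\langle\beta_i,\beta_i\rangle=1$ via the exceptional $M_i$), evaluating it on $N$ gives $\dim_K\Ext^1_\Lambda(N,N)=\dim_K\End_\Lambda(N)-\langle\beta_i,\beta_i\rangle=0$; hence $N$ is rigid, thus exceptional, over $\Lambda$. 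Uniqueness of the exceptional $\Lambda$-module with a given real-Schur-root dimension vector then forces $N\cong M_i$, so ${\bf D}_\Lambda(M_i)={\bf D}_\Lambda(N)={\bf D}_{\Lambda'}(N)$ is indeed a wall of $\Lambda'$. (Over a non-algebraically-closed $K$ one carries the factors $f_i$, i.e.\ the matrix $D$, through the pairing, and the same computation applies.)
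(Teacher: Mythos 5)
Your proposal is correct and runs on exactly the same engine as the paper's proof: a $\Lambda'$-module regarded as a $\Lambda$-module has the same endomorphism ring and the same submodules, so ${\bf D}_{\Lambda'}(M)={\bf D}_\Lambda(M)$, the $\Lambda'$-walls form a sub-collection of the $\Lambda$-walls, and every generic green path for $\Lambda$ is automatically one for $\Lambda'$. The only difference is thoroughness: the paper stops there and treats the ``in particular'' clause as immediate, whereas your final paragraph --- the Euler--Ringel computation showing that an exceptional $\Lambda'$-module $N$ with $\undim N=\beta_i$ is rigid over $\Lambda$, hence isomorphic to $M_i$, so its wall really is among the crossed ones --- correctly supplies the bookkeeping of exactly which $\beta_i$ survive, a point the paper leaves implicit (and note that for path algebras, which is the stated hypothesis, $D=I_n$ over any field, so your parenthetical caveat costs nothing).
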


\begin{proof}
Since $Q'\subseteq Q$ be the quiver of $\Lambda'$. Then any $\Lambda'$-module $M$ is also a $KQ'$-module which extends to a $KQ$-module by letting the other arrows act as 0. We have $\End_{\Lambda'}(M)=\End_\Lambda(M)$ and the $\Lambda'$-submodules $M'\subseteq M$ give $\Lambda$-submodules of $M$. Therefore:
\[
	{\bf D}_{\Lambda'}(M)={\bf D}_\Lambda(M).
\]
In other words, every wall for $\Lambda'$ is a wall for $\Lambda$ with the same dimension vector associated to it. The other walls of $\Lambda$ simply disappear. So, a generic green path $\gamma$ for $\Lambda$ will be a (generally shorter) generic green path for $\Lambda'$ passing through a subset of the original set of walls.
\end{proof}

Another valuable insight is the visualization of maximal green sequences \cite{BHIT}, \cite{ITW16}, \cite{Muller}. Figure \ref{FigA2tilde} shows the union $\bigcup {\bf D}_\Lambda(M)$ of all stability sets for $\Lambda=KQ$ of type $\tilde A_2$ (excluding an infinite number of stability sets accumulating onto the red line):
\[
\xymatrixrowsep{10pt}\xymatrixcolsep{10pt}
\xymatrix{
&& & 2\ar[dl]\\
Q: &&1 && 3\ar[ll]\ar[ul]
	}
\]
The idea behind this picture was generalized in the paper \cite{BHIT} to show that Figure \ref{FigA2tilde} is typical for quivers of tame type and thus there are only finitely many MGSs.
\begin{figure}[htbp]
\begin{center}
\begin{tikzpicture}
\allblackC
\end{tikzpicture}
\caption{Perspective view of the stereographic projection of the intersection with $S^2$ of the ``picture'' $\bigcup {\bf D}_\Lambda(M)\subset \RR^3$ for $\Lambda=KQ$ of type $\tilde A_2$. The red line is the domain of the null root. This is the accumulation set of an infinite family of stability sets making the shaded region impassible. The curvature of the lines shows the positive direction in which green paths must be pointed. So, we  see that there are only 5 maximal green sequences: A,B,C,D,E.}
\label{FigA2tilde}
\end{center}
\end{figure}
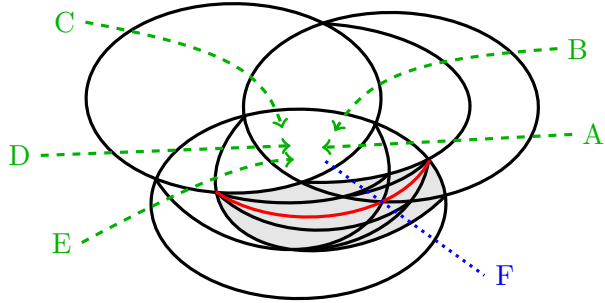

{
Chart \eqref{eq: 5 MGS for A2 tilde} lists the five MGSs for $\tilde A_2$ shown in green in Figure \ref{FigA2tilde} and the infinite path $F$ which is blue in that figure. The mutation sequences are indicated. For example, $B$ is the mutation sequence $\mu_2,\mu_1,\mu_2,\mu_3$. The modules listed are those that label the walls crossed by the paths. For example, $C$ crosses the walls ${\bf D}_\Lambda(S_1),{\bf D}_\Lambda(S_2),{\bf D}_\Lambda(S_3)$. The infinite path $F$ crosses the walls ${\bf D}_\Lambda(M)$ for all preinjective modules followed by the regular walls ${\bf D}_\Lambda(S_2),{\bf D}_\Lambda\binom31$ and the null wall being crossed simultaneously, then ${\bf D}_\Lambda(M)$ for all preprojective $M$.
\begin{equation}\label{eq: 5 MGS for A2 tilde}
\begin{array}{|c|l|c|l|}
\hline
 & \text{mutation sequence} &\text{length} & \text{sequence of modules}\\
 \hline
 A & 2,1,3,2,3 & 5 & S_2, P_2, S_3, \binom31, S_1\\
 \hline
 B & 2,1,2,3 & 4 & S_2, P_2, S_1, S_3\\
  \hline
C & 1,2,3 & 3 & S_1, S_2, S_3\\
 \hline
 D & 1,3,2,3 & 4 & S_1, S_3, I_2, S_2\\
  \hline
E & 3,1,3,2,1 & 5 & S_3, \binom31, S_1, I_2, S_2\\
 \hline
 F & 3,2,3,2,3,\cdots, 1,2,1,2 & \infty & S_3, I_2, I_1, \tau S_3,\cdots, R, \cdots, \tau^{-1}S_1, P_3,P_2,S_1\\
\hline
\end{array}
\end{equation} 
}

\begin{defn}
The \emph{semi-invariant picture} (also called ``cluster fan'' or ``scattering diagram'') for $\Lambda$ is defined to be the subset 
\[
	L(\Lambda):=\bigcup {\bf D}_\Lambda(M)\subset \RR^n
\]
where the union is over all Shurian $\Lambda$-modules $M$.
\end{defn}

Scattering diagrams have become very popular due to the work of Gross, Hacking, Keel and Kontsevich who used them to prove the positivity conjecture, sign coherence and other conjectures for cluster algebras \cite{GHKK}. An elementary proof of the existence of a consistent scattering diagram, i.e., a semi-invariant picture, for acyclic valued quivers is given in \cite{IOTW3}.

\section{$m$-maximal green sequences}

We continue to let $\Lambda$ denote a basic finite dimensional hereditary algebra over a field $K$. Let $m\ge1$. There are several equivalent definitions of an $m$-maximal green sequence. In my lecture I used the technical definition that it is a maximal sequence of ``backward mutations'' in the derived category of $mod$-$\Lambda$ starting with $\Lambda[m]$ and ending with $\Lambda[0]$. (See \cite{KQ}, Sec 4 or \cite{Q}, Sec 3.3.). After subsequent lectures by Iyama, Demonet, Buan and discussion with Zhu Han it became clear that the definition in terms of $t$-structures might be more appropriate. I will give both of these here. 

\subsection{$m$-clusters} We review the definitions of an $m$-cluster and two other types of objects which are in bijection with $m$-clusters. The aim is to explain the definition of an $m$-maximal green sequence and the reason that each term in such a sequence has three types of labels. A popular reference is \cite{KY}, but we take the point of view found in \cite{BRT} and \cite{KQ}. We use the base case $m=0$ as a very easy example of each.

The original definition of the \emph{$m$-cluster category} of $\Lambda$ \cite{Tm} is as an orbit category of the bounded derived category of $mod$-$\Lambda$:
\[
	\cC_m(\Lambda):=\cD^b(mod\text-\Lambda)/\tau^{-1}[m]
\]
where $\tau$ is the Auslander-Reiten translation. An \emph{$m$-cluster-tilting object} is an object $T$ of this category with $n$ nonisomorphic components so that
\[
	\Ext_{\cD^b}(T,T[\ell])=0
\]
for $0<\ell\le m$. The key point is that, if one component $T_k$ of $T$ is removed, there are, up to isomorphism, exactly $m$ ways to replace it with something different, i.e., there are $m$ mutations of $T$ in the $k$-direction. For $m=1$, this describes cluster-tilting objects in the cluster category $\cC_1(\Lambda)=\cC(\Lambda)$ of $\Lambda$ \cite{BMRRT}.

One convenient way to look at $m$-cluster-tilting objects is to represent them by objects of the fundamental domain $\cF D_m\subset \cD^b(mod\text-\Lambda)$ of the functor $\tau^{-1}[m]$ given by
\[
	\cF D_m:=mod\text-\Lambda\smallcoprod mod\text-\Lambda[1]\smallcoprod \cdots \smallcoprod mod\text-\Lambda[m-1]\smallcoprod \Lambda[m].
\]
This is the additive full subcategory of $\cD^b(mod\text-\Lambda)$ whose indecomposable objects are $M[\ell]$ where $M$ is an indecomposable $\Lambda$-module and $0\le \ell\le m$ with the additional restriction that $M$ is projective when $\ell=m$. We call $\ell$ the \emph{level} of $M[\ell]$. Recall that $M[\ell]$ is \emph{exceptional} if $M$ is an exceptional $\Lambda$-module.

\begin{defn}
An \emph{$m$-cluster} (also called a \emph{silting object} in $\cF D_m$) is an object $T\in\cF D_m$ having $n$ nonisomorphic exceptional summands $T_k$ so that $\Hom_{\cD^b}(T,T[\ell])=0$ for $\ell>0$.
\end{defn}

In the base case $m=0$, $\cF D_0$ consists only of projective modules in level 0 and $T=\Lambda$ is the only 0-cluster.

In \cite{BRT} it is shown that (isomorphism classes of) $m$-clusters are in 1-1 correspondence with ``$m$-$\Hom_{\le0}$-configurations'' defined as follows.

\begin{defn}
An \emph{$m$-$\Hom_{\le 0}$-configuration}, or simply \emph{$m$-configuration}, (also called a \emph{simple minded collection}) is a set of $n$ exceptional objects $E_1[\ell_1],\cdots,E_n[\ell_n]$ in $\bigcup_{\ell=0}^m mod\text-\Lambda[\ell]$ so that, for all $i\neq j$ and all $k\le0$ we have
\[
	\Hom_{\cD^b}(E_i[\ell_i],E_j[\ell_j+k])=0
\]
and so that the exceptional $\Lambda$-modules $E_1,\cdots,E_n$ form an exceptional sequence in some order.
\end{defn}

In the base case $m=0$, $\ell_k=0$ for all $k$ and $E_k\in mod\text-\Lambda$ must be the $n$ simple $\Lambda$-modules.

Finally, we recall the definition of a bounded $t$-structure.

\begin{defn}
A \emph{$t$-structure} in a triangulated category $\cD$ is a pair of additive full subcategory $(\cP,\cQ)$ so that
\begin{enumerate}
\item $\cP[1]\subset \cP$ and $\cQ\subset \cQ[1]$
\item $\Hom_\cD(P,Q)=0$ for all $P\in\cP,Q\in\cQ$ and
\item For every $X\in \cD$ there is a distinguished triangle
\[
	P\to X\to Q\to P[1]
\]
where $P\in\cP,Q\in\cQ$.
\end{enumerate}
\end{defn}

$\cP$ is called an \emph{aisle} \cite{KV}. It determines the $t$-structure since $\cQ$ is the full subcategory of all $X\in \cD$ so that $\Hom_\cD(P,X)=0$ for all $P\in\cP$. We use the aisle to give the partial ordering on $t$-structures. Thus $(\cP,\cQ)\le (\cP',\cQ')$ iff $\cP\subseteq\cP'$.

The \emph{heart} of a $t$-structure $(\cP,\cQ)$ is $\cP\cap \cQ[1]$. This is always an abelian category \cite{BBD}. A $t$-structure is call \emph{bounded} if every object of $\cD$ is in $\cP[-k]\cap \cQ[k]$ for some integer $k$.

We are interested in the $t$-structures whose hearts $\cH$ are \emph{length categories}, i.e., so that the Jordan-H\"older Theorem holds in $\cH$. In $\cD^b(mod\text-\Lambda)$, there is a 1-1 correspondence between bounded $t$-structures with length hearts and simple-minded collections. The correspondence sends a $t$-structure to the collection of simple objects in its heart. See \cite{KY}.

\subsection{Graded tropical duality} We need graded $g$-vectors and ``slope vectors'' to formulate the graded version of the tropical duality formula of \cite{NZ}.

\begin{defn}
The \emph{graded $g$-vector} of $M[k]$ is defined to be the vector $\tilde g(M[k])\in\ZZ[t]^n$ whose $i$th coordinate is $t^k(n_i-m_i)$ where
\[
	\coprod P_i^{m_i}\to \coprod P_i^{n_i}\to M\to 0
\]
is the {minimal} projective presentation of $M$. Thus
\[
	D\tilde g(M[k])=t^kf_i(n_i-m_i)_i\in \ZZ[t]^n
\]
where $D$ is the diagonal matrix with diagonal entries $f_i=\dim_K\End_\Lambda(P_i)$.
\end{defn}

\begin{defn}
The \emph{graded $c$-vector} of $N[k]$ is
\[
	\tilde c(N[k])=t^{m-k}\undim N\in \ZZ[t]^n.
\]
We define the \emph{slope} of $N[k]$ to be $m-k$ and $k$ is its \emph{level} (or \emph{degree}) if $N\in mod\text-\Lambda$. (The name comes from the fact that $m-k_i$ is the slope of the $i$th edge in the $m$-noncrossing tree corresponding to an $m$-cluster \cite{next}.)
\end{defn}

\begin{thm}[graded tropical duality]\label{thm: graded tropical}
Let $T=\coprod T_i$ be an $m$-cluster tilting object (``silting object'') and let $X=\coprod X_j$ be the corresponding $m$-$\Hom_{\le 0}$-configuration (``simple minded collection''). Then, we have:
\begin{equation}\label{eq: graded tropical}
	\tilde g(T_i)^tD \tilde c(X_j)=t^mf_i\delta_{ij}\text{ or }-t^{m-1}f_i\delta_{ij}.
\end{equation}
Furthermore, $T$ and $X$ uniquely determine each other by this relation.
\end{thm}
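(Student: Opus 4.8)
The plan is to reduce the polynomial identity \eqref{eq: graded tropical} to ordinary homological algebra over $\Lambda$ and then feed in the biorthogonality that already underlies the correspondence $T\leftrightarrow X$. Write $T_i=M_i[k_i]$ and $X_j=E_j[\ell_j]$ with $M_i,E_j$ exceptional $\Lambda$-modules and $0\le k_i,\ell_j\le m$. Unwinding the definitions gives $\tilde g(T_i)=t^{k_i}g(M_i)$ and $\tilde c(X_j)=t^{m-\ell_j}\undim E_j$, so that
\[
	\tilde g(T_i)^tD\tilde c(X_j)=t^{m+k_i-\ell_j}\bigl(g(M_i)^tD\undim E_j\bigr).
\]
The scalar factor is the Euler--Ringel pairing, hence by the formula recalled in the preliminaries it equals $t^{m+k_i-\ell_j}\bigl(\dim_K\Hom_\Lambda(M_i,E_j)-\dim_K\Ext^1_\Lambda(M_i,E_j)\bigr)$. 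First I would record the elementary translation into $\cD^b$: since $\Lambda$ is hereditary, $\Ext^{\ge 2}$ vanishes and $\Hom_{\cD^b}(T_i,X_j[p])=\Ext_\Lambda^{\,\ell_j-k_i+p}(M_i,E_j)$ is nonzero only for $p=k_i-\ell_j$ and $p=k_i-\ell_j+1$, carrying $\Hom_\Lambda(M_i,E_j)$ and $\Ext^1_\Lambda(M_i,E_j)$ respectively.

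The key input is the biorthogonality that characterises the silting / simple-minded (equivalently $m$-cluster / $m$-configuration) bijection of \cite{KY}, \cite{BRT}: after matching indices,
\[
	\Hom_{\cD^b}(T_i,X_j[p])=\delta_{ij}\,\delta_{p0}\,D_i,\qquad \dim_KD_i=f_i,
\]
where $D_i=\End_{\cD^b}(X_i)$ is a division ring. Granting this, for $i\neq j$ both potentially nonzero terms above vanish, so $g(M_i)^tD\undim E_j=0$ and the $(i,j)$ pairing is $0$. For $i=j$ the single surviving group sits in derived degree $p=0$; as that degree is realised by $\Ext_\Lambda^{\,\ell_i-k_i}(M_i,E_i)$, heredity forces $\ell_i-k_i\in\{0,1\}$. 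If $\ell_i=k_i$ the surviving term is $\Hom_\Lambda(M_i,E_i)$ of dimension $f_i$ and $\Ext^1$ vanishes, giving $t^{m}f_i$; if $\ell_i=k_i+1$ the surviving term is $\Ext^1_\Lambda(M_i,E_i)$ of dimension $f_i$ and $\Hom$ vanishes, giving $-t^{m-1}f_i$. These are exactly the two alternatives in \eqref{eq: graded tropical}.

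For the final assertion, I would read the identity just proved as a matrix equation $\tilde G^{t}D\tilde C=\Delta$, where $\tilde G,\tilde C$ have $i$th columns $\tilde g(T_i),\tilde c(X_i)$ and $\Delta=\operatorname{diag}(\varepsilon_i)$ with each $\varepsilon_i\in\{t^{m}f_i,\,-t^{m-1}f_i\}$ nonzero. Since $\Delta$ and $D$ are invertible over $\ZZ[t^{\pm1}]$, so is $\tilde G$, and $\tilde C=D^{-1}(\tilde G^{t})^{-1}\Delta$ (and symmetrically $\tilde G$ is recovered from $\tilde C$). Thus $\{\tilde g(T_i)\}$ and $\{D\tilde c(X_j)\}$ are dual bases up to the known scalars. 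Finally each graded $c$-vector $t^{m-\ell_j}\undim E_j$ recovers the level $\ell_j$ and the real Schur root $\undim E_j$, hence the unique module $E_j$ and the object $X_j$; likewise $\tilde g(T_i)=t^{k_i}g(M_i)$ recovers $k_i$ and $g(M_i)$, hence $T_i$. So $T$ and $X$ determine each other through the relation.

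The part I expect to be genuinely load-bearing is the biorthogonality together with the dimension count $\dim_KD_i=f_i$; everything else is bookkeeping. If one does not take this as the definition of ``corresponding'', it must be proved from the construction of $X$ as the simple objects of the heart of the bounded $t$-structure attached to $T$, identifying each $T_i$ with (the image of) a projective cover in that heart. This is where the only real homological content lies, and in particular it is where the matching of indices has to be pinned down so that the diagonal value is $f_i$ rather than merely $\dim_K\End_\Lambda(E_i)$.
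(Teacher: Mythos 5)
Your derivation of the formula itself is sound and is essentially the paper's own (second) argument: the paper likewise reduces to the King--Qiu duality $\Hom_{\cD^b}(T_i,X_j)=\delta_{ij}K$ (projectives versus simples of the heart $\cH$), observes that $T_i$ being a two-term projective complex in degrees $k_i,k_i+1$ forces the matching simple-minded object to sit in degree $k_i$ or $k_i+1$, and reads off the two cases of \eqref{eq: graded tropical}; for the non-simply-laced case the paper defers, as you do, to the braid-move argument of \cite{BRT}. Your explicit Euler--Ringel bookkeeping fills in what the paper leaves implicit, and your closing caveat --- that all the weight rests on the biorthogonality $\Hom_{\cD^b}(T_i,X_j[p])=\delta_{ij}\delta_{p0}D_i$ with $\dim_K D_i=f_i$ rather than merely $\dim_K\End_\Lambda(E_i)$ --- is exactly the dependency the paper also outsources (to \cite{KQ} in the simply laced case, to \cite{BRT} in general).

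The one genuine gap is in your uniqueness argument. You invert the matrix identity $\tilde G^{t}D\tilde C=\Delta$ and speak of ``the known scalars,'' but $\Delta$ is not determined by one side alone: which of $t^mf_i$ or $-t^{m-1}f_i$ occurs in column $i$ is part of the data being recovered. So from $T$ alone, $\tilde C=D^{-1}(\tilde G^{t})^{-1}\Delta$ pins down each $\tilde c(X_j)$ only up to a factor of $-t^{-1}$, and symmetrically when recovering $\tilde G$ from $\tilde C$. This is repairable --- exactly one of the two candidates is a power of $t$ times a nonnegative dimension vector (resp.\ a power of $t$ in the allowed range times the $g$-vector of an exceptional object of $\cF D_m$, noting $-g(M)$ is never such a $g$-vector since the Euler form is nondegenerate and dimension vectors are positive) --- but the step must be made. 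The paper sidesteps the issue with a cleaner device: specialize to $t=-1$, where the two cases collapse to the single equation $g(T)^tDc(X)=(-1)^mD$, so the ungraded matrices determine each other with no case ambiguity; the exceptional modules are then recovered from their vectors, and only at the last step is the sign in \eqref{eq: graded tropical} invoked to pin down the level, ``only one of them has the correct sign.'' You would do well to adopt that specialization, or else insert the positivity argument explicitly.
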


\begin{rem}
In my lecture, I assumed, for simplicity of notation, that $K=\overline K$ and $D=I_n$, the identity matrix. Then this formula is a dot product:
\begin{equation}\label{eq: graded tropical, simply laced}
	\tilde g(T_i)\cdot \tilde c(X_j)=t^m\delta_{ij}\text{ or }-t^{m-1}\delta_{ij}
\end{equation}
which implies that the \emph{$g$-vectors} (the values of $\tilde g(T_i)$ at $t=-1$) lie in the hyperplane perpendicular to $c(X_j)$ for $i\neq j$. In general, it is $D g(T_i)$ which lies in this hyperplane.
\end{rem}

\begin{proof}
As stated, this theorem follows from the proof in \cite{BRT}. In the Garside braid move which transforms $T_i$ into $X_i$, the degree of $X_i$ changes at most once and when it does it increases by one. (When $X_i$ changes level, it becomes relatively projective and stays that way. When $T_i$ is projective, the degree cannot shift by this process.) So, the slope decreases by zero or one which gives the two cases. (See \cite{enumerate} for more details.)

There is a very nice explanation of this in \cite{KQ} in the simply laced case. King and Qiu show in that case that $T_i$ are the projective objects of the heart $\cH$. Since $X_i$ are the simple objects of $\cH$ we get:
\begin{equation}\label{KQ duality}
	\Hom_{\cD^b}(T_i,X_j)=\delta_{ij}K
\end{equation}
in the algebraically closed case. Since $T_i=M_i[\ell_i]$ is a projective complex in degrees $\ell_i$ and $\ell_i+1$ (making $\tilde g(T_i)$ a vector in $t^{\ell_i}\ZZ^n$), the simple-minded component $X_i=N_i[k_i]$ must be in degree $k_i=\ell_i$ or $\ell_i+1$ (putting it at level $m-\ell_i$ or $m-\ell_i-1$). So, the formula \eqref{KQ duality} is equivalent to \eqref{eq: graded tropical, simply laced}.

To see that $T,X$ determine each other, we first set $t=-1$. Then the equation becomes
\[
	 g(T)^tDc(X)=(-1)^mD
\]
which gives, e.g., $g(T)=(-1)^mD(c(X)^t)^{-1}D^{-1}$. Since exceptional modules are detemined by their dimension vectors, this gives each $T_i$ up to its degree. The equation \eqref{eq: graded tropical} gives only two possibilities for the level of $T_i$ but only one of them has the correct sign. So, each $T_i$ is uniquely determined by $X$. Similarly, the $X_j$ are uniquely determined by $T$.
\end{proof}

\subsection{$m$-maximal green sequences} In parallel with the discussion of standard maximal green sequences as maximal chains in the poset of functorially finite torsion classes in $mod\text-\Lambda$, an $m$-MGS can be described as a maximal chain in the poset of bounded $t$-structures with length heart starting with (aisle equal to)
\[
	\cD^b_m(\Lambda)=\bigcup_{k\ge m} mod\text-\Lambda[k]
\]
which is the smallest aisle containing $\Lambda[m]$ and ending with $\cD^b_0(\Lambda)$.

In my lecture, I defined an $m$-maximal green sequence to be a sequence of ``negative mutations'' of $m$-clusters starting with $\Lambda[m]$ and ending with $\Lambda=\Lambda[0]$. Then I reinterpreted this as a sequence of ``positive mutations'' of $m$-configurations starting with $X=\bigoplus S_i[m]$ and ending with $\bigoplus S_i$. This is ``positive'' when using the slope $m-k$ instead of the level $k$ of $X[k]$. These three notions of an $m$-maximal green sequence are equivalent since the bijection between bounded $t$-structures with length heart, silting objects and simple-minded objects respects the partial ordering (see \cite{KY}).

Various formulas for these mutations are known (\cite[Sec 3]{BRT}, \cite[Sec 5]{IY}, \cite[Sec 7.2]{KY}). I use the numerical formula (Definition \ref{def: positive mutation of m=clusters}) equivalent to the one in \cite[Prop 5.4]{KQ}. This is a modified version of the Fomin-Zelevinsky mutation formula for extended exchange matrices \cite{FZ} using the tropical duality formula $g(T)^tDc(X)=D$ of \cite{NZ}. 

An \emph{exchange matrix} is defined to be an $n\times n$ skew symmetrizable integer matrix $B$, i.e., $DB$ is skew-symmetric for some positive diagonal matrix $D$. The matrix $D$ is fixed, but $B$ is mutable. The starting value of $B$ will be denoted $B_0$ and called the \emph{initial exchange matrix}. An \emph{extended exchange matrix} will be a $(2n+1)\times n$ integer matrix $\widetilde B$ with initial value $\widetilde B_0$:
\[
	\widetilde B=\mat{B\\
	\hline |C|\\
	\hline s},\qquad \widetilde B_0=\mat{B_0\\
	\hline I_n\\
	\hline 0}
\]
where $B$ is an exchange matrix, $|C|$, is an $n\times n$ matrix with nonnegative integer entries and the last row $s=(s_1,\cdots,s_n)$ is an integer vector with entries $0\le s_i\le m$. The \emph{$\tilde c_j$-vector} is defined to be
\[
	\tilde c_j=t^{s_j} |c_j|\in \ZZ[t]^n
\]
where $|c_j|$ is the $j$th column of $|C|$. We use the notation $c_j=(-1)^{s_j}|c_j|$ and we let $C$ be the matrix with $j$th column $c_j$. The initial extended exchange matrix $\widetilde B_0$ uses the initial exchange matrix $B_0$, $C=I_n$ the identity matrix and $0$ indicates the $1\times n$ null matrix.

The description above only gives the general format of the $(2n+1)\times n$ matrices $\widetilde B$. The mutation rules given in the following definition will produce these extended exchange matrices out of the initial matrix $\widetilde B_0$. The claim (theorem) is that, when the corresponding mutations are performed on the initial $m$-configuration consisting of the simple $\Lambda$-modules shifted by $m$: $S_i[m]\in mod\text-\Lambda[m]$, the $m$-configuration obtained will be $X=\bigoplus E_i[k_i]$, the bottom row of $\widetilde B$ will have entries $s_i:=m-k_i$ and the $i$th column of the middle part $|C|$ of $\widetilde B$ will be the dimension vector of $E_i$.

\begin{defn}\label{def: positive mutation of m=clusters}
Let $\widetilde B$ be as above so that $B=D^{-1}C^t B_0C$. When $s_k<m$, the \emph{positive mutation} $\widetilde B'=\mu_k^+\widetilde B$ of $\widetilde B$ in the \emph{$k$-th direction} is defined as follows.
\begin{enumerate}
\item $\mu_k^+$ increases the slope $s_k$ of $\tilde c_k$ by 1: $\tilde c_k'=t\tilde c_k$.
\item If $s_j\neq s_k,s_k+1$ or if $b_{kj}\le0$ then $\tilde c_j$ does not change: $\tilde c_j'=\tilde c_j$.
\item {If $s_j=s_k$ and $b_{kj}>0$, then
\[
	\tilde c_j'=\tilde c_j+\tilde c_k b_{kj}
\]
with the same slope $s_j'=s_j=s_k$.
}
\item {If $s_j=s_k+1$ and $b_{kj}>0$, there are two cases:}
\begin{enumerate}
\item {If $|c_j|-|c_k|b_{kj}>0$ then
\[
	\tilde c_j'=\tilde c_j-t\tilde c_k b_{kj}
\]
with the same slope $s_j'=s_j=s_k+1$.}
\item {If $|c_j|-|c_k|b_{kj}<0$ then
\[
	\tilde c_j'=\tilde c_k b_{kj}-t^{-1}\tilde c_j
\]
with slope $s_j'=s_k=s_j-1$.}
\end{enumerate}
\item The new value of $B$ is $B'=D^{-1}(C')^tDB_0C'$ where $C'$ is the matrix with $j$th column $c_j'$, the value of $\tilde c_j'$ at $t=-1$.
\end{enumerate}
\end{defn}

\begin{rem}
In module-theoretic terms, the mutation is given as follows. (3.2, 3.3 in \cite{BRT}.)
\begin{enumerate}
\item If the $k$th object of the $m$-configuration is $X_k=E_k[m-s_k]$, positive mutation replaces this with its negative shift: $X_k'=E_k[m-s_k-1]$.
\item Objects $X_j=E_j[\ell]$ for $\ell$ not equal to $m-s_k$ or $m-s_k-1$ are unchanged.
\item When $b_{kj}>0$, objects $X_j=E_j[m-s_k]$, with the same slope as $X_k$, are replaced with $X_j'=E_j'[m-s_k]$ where $E_j'$ is the universal extension
\[
	0\to E_j\to E_j'\to E_k^r\to 0
\]
where $r=b_{kj}=\dim_{End(E_k)}\Ext^1(E_k,E_j)$. If $b_{kj}\le0$ then $X_j'=X_j$.
\item For objects $X_j=E_j[m-s_k-1]$ with $b_{kj}\le0$ we have $X_j'=X_j$. Otherwise, consider the universal morphism
\[
	\varphi: E_k^r\to E_j 
\]
where $r=b_{kj}=\dim_{End(E_k)}\Hom(E_k,E_j)$. $\varphi$ is a monomorphism or an epimorphism.
\begin{enumerate}
	\item If $\varphi$ is a monomorphism then $E_j'=\coker\varphi$ and $X_j'=E_j'[m-s_k-1]$ has the same slope as $X_j$.
	\item If $\varphi$ is an epimorphism then $E_j'=\ker\varphi$ and $X_j'=E_j[m-s_k]$ has slope $s_j'=s_j-1$.
\end{enumerate}
\end{enumerate}
Proof: By definition of an $m$-configuration, the modules $E_j$ form an exceptional sequence in increasing order of slope (decreasing level $\ell_j$ if $X_j=E_j[\ell_j]$). Under the mutation $\mu_k^+$, the slope of $X_k=E_k[m-s_k]$ is increased by 1. This moves $E_k$ to the right in the exceptional sequence past terms of slope $s_k,s_k+1$. Thus $E_j$ of those two slopes are changed by exceptional sequence mutation as outlined in (3), (4) and those of other slopes are unchanged as stated in (2).
\end{rem}

Given a hereditary algebra $\Lambda$, its \emph{Euler matrix} $E$ is defined to have entries
\[
	e_{ij}=\dim_K\Hom_\Lambda(S_i,S_j)-\dim_K\Ext^1_\Lambda(S_i,S_j).
\]
The diagonal matrix $D$ has entries $f_i=\dim_K\End_\Lambda(S_i)$. The exchange matrix of $\Lambda$ is
\[
	B_\Lambda=D^{-1}(E^t-E).
\]
This is a skew-symmetrizable integer matrix. When restricted to $m=1$, Definition \ref{def: positive mutation of m=clusters} gives the formula for ``green'' mutations of the clusters in the cluster category of $mod\text-\Lambda$. (See \cite{IOTW2}.) We are claiming that Definition \ref{def: positive mutation of m=clusters} is the extension of this formula to $m$-clusters. For the proof of this claim, see \cite{enumerate}.

\begin{eg}
Let $Q$ be the quiver $1\leftarrow 2$. Then $D=I_2$ and $E=\mat{1 & 0\\ -1&1}$. So, $B_0=E^t-E=\mat{0 & -1\\ 1&0}$. Let $m=2$. Then we can perform $\mu_1^+$ twice:
\begin{center}
\begin{tikzpicture}
\coordinate (B) at (0,1.8);
\coordinate (C) at (0,0.5);
\coordinate (Cslope) at (0.3,-0.45);
\coordinate (Vslope) at (0.3,-2.3);
\coordinate (V) at (0,-1.4);
\coordinate (M) at (1.7,0);
\begin{scope}
\draw (3.4,.5)  node{$\xrightarrow{\mu_1^+}$};
	\draw (M) node{$\left[
	\begin{array}{rrr} 
	0 & -1 \\
	1 & 0 \\ 
	\hline
	 \color{green!66!black}1 & \color{black} 0 \\
	 \color{green!66!black}0 & \color{black} 1 \\
	\hline
	 \color{green!66!black}0 &\color{black} 0 \\
	\end{array}\right]$};
\end{scope}
\begin{scope}[xshift=3.5cm,yshift=0cm] 
	\draw (3.4,.5)  node{$\xrightarrow{\mu_1^+}$};
	\draw (1.7,0) node{$\left[
	\begin{array}{rrr} 
	0 & 1 \\
	-1 & 0 \\ 
	\hline
	\color{green!66!black}1 &  0 \\
	\color{green!66!black}0 &  1 \\
	\hline
	\color{green!66!black}1 &  0 \\
	\end{array}\right]$};
\end{scope}
\begin{scope}[xshift=7cm,yshift=0cm] 
	\draw (1.7,0) node{$\left[
	\begin{array}{rrr} 
	0 & -1 \\
	1 & 0 \\ 
	\hline
	\color{red!80!white}1 &  0 \\
	\color{red!80!white}0 &  1 \\
	\hline
	\color{red!80!white}2 &  0 \\
	\end{array}\right]$};
\end{scope}
\end{tikzpicture}
\end{center}
In the first mutation we note that the sign of $|c_1|$ does not change. Instead we increase its slope by 1. When we mutate the second time, $s_1=1$. So, the $c$-vector $c_2$, with slope $0$ (not equal to $s_1$ or $s_1+1$), is unchanged. When $s_1=2$, the mutation $\mu_1^+$ is no longer allowed.
\end{eg}

\section{Horizontal and vertical algebras}

Horizontal and vertical mutation fans are semi-invariant pictures for horizontal and vertical algebras which are associated to each $m$-configuration. We give definitions and examples.

\subsection{Definitions}

Let $X=\bigoplus X_i$ be a fixed $m$-configuration. We use round brackets to indicate slope. Thus
\[
	X_i=E_i(s_i)=E_i[m-s_i].
\]
\begin{lem}\cite{BRT}\label{lem: arrangement of m-conf}
The components $X_i=E_i(s_i)$ can be numbered so that
\begin{enumerate}
\item $(E_1,\cdots,E_n)$ is a complete exceptional sequence, i.e., $\Hom_\Lambda(E_j,E_i)=0=\Ext_\Lambda^1(E_j,E_i)$ for $i<j$.
\item $0\le s_1\le\cdots\le s_n\le m$. In particular, $s_i<s_j$ implies $i<j$.
\end{enumerate}
\end{lem}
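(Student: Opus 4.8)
The plan is to translate the derived-category orthogonality built into the definition of an $m$-configuration into module-level vanishing of $\Hom$ and $\Ext^1$, and then to reorder the components by a \emph{stable sort} on slope. Write $X_i=E_i(s_i)=E_i[\ell_i]$ with $\ell_i=m-s_i$ the level. Since $\Lambda$ is hereditary, for any modules $M,N$ and any integer $d$ one has $\Hom_{\cD^b}(M,N[d])=\Ext_\Lambda^d(M,N)$, which is nonzero only for $d\in\{0,1\}$. Applying this to $\Hom_{\cD^b}(E_i[\ell_i],E_j[\ell_j+k])=\Ext_\Lambda^{\,\ell_j+k-\ell_i}(E_i,E_j)$, the defining condition (vanishing for all $i\neq j$ and all $k\le0$) becomes informative exactly at $k=\ell_i-\ell_j$ and $k=\ell_i-\ell_j+1$, which satisfy $k\le0$ precisely when $s_i\ge s_j$ and $s_i>s_j$ respectively. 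First I would record the two consequences:
\[
	s_i\ge s_j\ \Longrightarrow\ \Hom_\Lambda(E_i,E_j)=0,\qquad s_i>s_j\ \Longrightarrow\ \Ext^1_\Lambda(E_i,E_j)=0.
\]
Interchanging $i$ and $j$, the clean upshot is: if $s_i<s_j$ then $\Hom_\Lambda(E_j,E_i)=0=\Ext^1_\Lambda(E_j,E_i)$, while if $s_i=s_j$ then $\Hom_\Lambda(E_i,E_j)=0=\Hom_\Lambda(E_j,E_i)$.

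Next, starting from the exceptional ordering $\sigma$ of the modules $\{E_1,\dots,E_n\}$ guaranteed by the definition, I would reindex the components by sorting the slopes $s_i$ into weakly increasing order, breaking ties using the relative order inherited from $\sigma$ (a stable sort). This immediately gives property (2), namely $0\le s_1\le\cdots\le s_n\le m$, and the ``in particular'' clause ($s_i<s_j\Rightarrow i<j$) is then formal.

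It remains to verify property (1): with this new indexing $(E_1,\dots,E_n)$ is a complete exceptional sequence, i.e. $\Hom_\Lambda(E_j,E_i)=0=\Ext^1_\Lambda(E_j,E_i)$ whenever $i<j$. Fix such a pair. If $s_i<s_j$, both vanishings hold by the first implication recorded above, irrespective of the ordering. If $s_i=s_j$, then $E_i$ and $E_j$ lie in the same slope class, and $i<j$ in the new order means $E_i$ precedes $E_j$ in $\sigma$ as well, since the stable sort preserves the intra-class order; as any subsequence of an exceptional sequence is again an exceptional sequence, the backward $\Hom$ and $\Ext^1$ vanish. Having $n$ pairwise-compatible exceptional objects in rank $n$, the sequence is complete.

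The one genuinely delicate point is the equal-slope case: the derived orthogonality forces only $\Hom$-orthogonality (in both directions) within a slope class, never the vanishing of $\Ext^1$. Hence the required $\Ext^1_\Lambda(E_j,E_i)=0$ for same-slope pairs cannot come from the slope data alone and must be supplied by the ambient exceptional sequence $\sigma$. This is exactly why the sort must be \emph{stable} rather than arbitrary: an unstable reshuffle inside a slope class could reverse an $\Ext^1$ and destroy the exceptional-sequence property, whereas keeping the $\sigma$-induced order within each class guarantees it.
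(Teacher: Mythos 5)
Your proposal is correct. Note that the paper itself supplies no proof of this lemma — it is quoted from \cite{BRT} — so there is no in-paper argument to compare against; what you have written is a sound, self-contained reconstruction from the paper's definition of an $m$-$\Hom_{\le 0}$-configuration. Your two slope-indexed implications are exactly what heredity extracts from the $k\le 0$ orthogonality (the degree-$d$ Hom space $\Hom_{\cD^b}(E_i[\ell_i],E_j[\ell_j+k])=\Ext_\Lambda^{\,\ell_j+k-\ell_i}(E_i,E_j)$ is informative only for $d=0,1$, and the constraint $k\le0$ translates precisely into $s_i\ge s_j$, resp.\ $s_i>s_j$), and your diagnosis of the delicate point is the right one: within a slope class the derived condition yields only two-sided $\Hom$-orthogonality, so the one-directional $\Ext^1$-vanishing needed for an exceptional sequence must come from the definition's clause that the $E_i$ form an exceptional sequence in some order — which is exactly what your stable sort preserves. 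The only cosmetic caveat is that your two recorded implications should carry the hypothesis $i\neq j$ explicitly (as $\Hom_\Lambda(E_i,E_i)\neq 0$), but since you only ever apply them to distinct pairs, nothing is affected.
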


We define the \emph{span} of an exceptional sequence $(M_1,\cdots,M_r)$ in $mod\text-\Lambda$ to be
\[
	span(M_1,\cdots,M_r)=\left(\,^{\perp}M\right)^{\perp}
\]
where $M=\bigoplus M_i$ and $\,^{\perp}M$ denotes the \emph{left hom-ext-perpendicular} subcategory of $mod\text-\Lambda$ of all $X$ so that $\Hom_\Lambda(X,M)=0=\Ext_\Lambda^1(X,M)$ and similarly for $M^{\perp}$. It is a well-known property of exceptional sequences (\cite{CB}, \cite{Ringel}) that the span of an exceptional sequence of length $r$ is equivalent to $mod\text-H$ for some hereditary algebra $H$ of rank $r$.

\begin{defn}
For $E_i(s_i)$ as above and $0\le s<t\le m$, let
\[
	\cA_{st}:=span(E_{s_k}\,|\, s<s_k\le t)
\]
and let $\Lambda_{st}(X)$ be the hereditary algebra with the property that $mod\text-\Lambda_{st}(X)\cong \cA_{st}$. For each $s$ let $H_s(X)$ and $V_s(X)$ be the algebras $H_s(X)=\Lambda_{2s,2s+1}(X)$ and $V_s(X)=\Lambda_{2s-1,2s}(X)$. We define the \emph{horizontal} and \emph{vertical algebras} of $X$ to be 
\[
	H(X)=\prod_{0\le s\le m/2} H_s(X),\quad V(X)=\prod_{0\le s\le (m+1)/2} V_s(X).
\]
\end{defn}

From Lemma \ref{lem: arrangement of m-conf} it follows that $\cA_{tn}^\perp=\cA_{0t}$ and $\cA_{tn}=\,^\perp\cA_{0t}$. This implies
\[
\cA_{st}=\cA_{tn}^\perp\cap \,^\perp\cA_{0s}.
\]
\begin{thm}
Let $X_k=E_k(s_k)$ and let $X'=\mu_k^+(X)$. Then
\begin{enumerate}
\item $H(X')=H(X)$ if $s_k$ is even.
\item $V(X')=V(X)$ if $s_k$ is odd.
\end{enumerate}
\end{thm}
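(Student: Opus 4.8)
The plan is to track how a single positive mutation $\mu_k^+$ acts on the truncation subcategories $\cA_{0,t}$ and then to observe that $H(X)$ and $V(X)$ are assembled from these truncations according to the parity of $t$. Throughout write $p=s_k$, so that $\mu_k^+$ raises the slope of $E_k$ from $p$ to $p+1$. Everything reduces to a single wall-preservation lemma plus bookkeeping.

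First I would prove: $\cA_{0,t}(X')=\cA_{0,t}(X)$ for every $t\neq p$, while only $\cA_{0,p}$ can change. This uses the module-theoretic description of $\mu_k^+$ in the Remark after Definition \ref{def: positive mutation of m=clusters}. Every component of $X$ whose slope is neither $p$ nor $p+1$ is left untouched, and on the components of slopes $p$ and $p+1$ the mutation performs the universal extensions and co-extensions of rules (3) and (4). The key point is that these operations never move a component outside the two slopes $\{p,p+1\}$: $E_k$ rises from $p$ to $p+1$, each remaining slope-$p$ component stays at slope $p$, and each slope-$(p+1)$ component either stays at $p+1$ (cokernel) or falls to $p$ (kernel). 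As the Remark explains, this is exactly the effect of sliding $E_k$ rightward in the ambient exceptional sequence past the terms of slopes $p$ and $p+1$, i.e. a composite of exceptional-pair mutations (braid moves), which preserve the subcategory generated by the sequence they act on (\cite{CB}, \cite{Ringel}). Hence for $t\le p-1$ the components of slope $\le t$ are literally unchanged; for $t\ge p+1$ the initial segment of components of slope $\le t$ is the same index set, reorganized only within the slopes $\{p,p+1\}\subseteq\{\,\cdot\le t\}$ by braid moves, so $\cA_{0,t}$ is unchanged; and only at $t=p$ does the index set itself change, since $E_k$ leaves the segment and the fallen components enter it.

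Next I would assemble $H$ and $V$ from the walls. Using the identity $\cA_{st}=\cA_{tn}^\perp\cap\,^\perp\cA_{0s}$ together with $\cA_{tn}^\perp=\cA_{0t}$, each horizontal factor is the wide subcategory cut out between two truncations whose indices have one fixed parity, and each vertical factor between two truncations of the other parity; concretely $H(X)$ is determined by the truncations $\cA_{0,t}$ at the odd levels bounding the horizontal slabs $\{2s,2s+1\}$ (the intermediate even wall being internal to the slab and hence invisible to $H_s$), while $V(X)$ is determined by the even-level truncations. Granting this, the theorem follows from the lemma: if $s_k=p$ is even, the only truncation $\mu_k^+$ disturbs is $\cA_{0,p}$ with $p$ even, every odd-level truncation is preserved, and therefore each $H_s$, and hence $H(X)$, is preserved; the case $p$ odd is identical with the roles of $H$ and $V$ interchanged.

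The main obstacle is the confinement step in the lemma: verifying that rules (3) and (4) really keep every component inside the slab $\{p,p+1\}$ and realize a braid move of the exceptional sequence restricted to that slab, so that no component of slope $>t$ ever contributes to $\cA_{0,t}$ for $t\ge p+1$. The second delicate point is the parity bookkeeping in the assembly step, where one must match the even/odd index of the moving wall $\cA_{0,p}$ with the slab boundaries defining $H_s$ and $V_s$, so that a disturbed even wall is invisible to $H$ and a disturbed odd wall is invisible to $V$; here it is essential that $H_s$ and $V_s$ are read as the spans of the two-slope slabs (between consecutive walls of a single parity) rather than of individual slopes, since only the slab-span is insensitive to the internal wall that $\mu_k^+$ moves. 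Both points are bookkeeping on top of the single structural input, namely the invariance of the span of an exceptional sequence under braid moves, rather than new computation.
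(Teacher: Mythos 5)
Your proposal is correct and follows essentially the same route as the paper: both arguments rest on the observation that $\mu_k^+$ only disturbs components of slopes $s_k$ and $s_k+1$ and keeps them confined to those two slopes, combined with the identity $\cA_{st}=\cA_{tn}^\perp\cap\,^\perp\cA_{0s}$ to handle the one affected slab. Your repackaging --- proving that every truncation $\cA_{0,t}$ with $t\neq s_k$ is invariant and then reading $H$ off the odd-indexed and $V$ off the even-indexed truncations --- is a uniform bookkeeping of what the paper does factor by factor; the only real divergence is that for $t>s_k$ the paper gets invariance of $\cA_{0,t}$ immediately as $\cA_{tn}^\perp$ with $\cA_{tn}$ untouched, whereas you invoke braid-move invariance of spans, which is also valid but an unnecessary extra input.
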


\begin{proof}
This follows from the formula for the mutation $\mu_k^+$ given in Definition \ref{def: positive mutation of m=clusters}(2). Suppose $s_k$ is even. All objects $X_j$ with slope other than $s_k$ and $s_k+1$ are unchanged and all components with slope $s_k$ or $s_k+1$ are replaced with other components with slope $s_k$ or $s_k+1$. So, $\cA_{0,s_k-1}$ and $\cA_{s_k+1,n}$ are unchanged. So, $H_s(X')=H_s(X)$ for $s\neq s_k/2$. And $H_{s_k/2}(X)$ is also unchanged since $\cA_{s_k,s_k+1}=\,^\perp \cA_{0,s_k}\cap \cA_{s_k+1,n}^\perp$ is unchanged. The proof for $s_k$ odd is similar.
\end{proof}

We call the mutation $X'=\mu_k^+(X)$ (and $X=\mu_k^-(X')$) a \emph{horizontal mutation} if $s_k$ even, so that $H(X)=H(X')$. When $V(X)=V(X')$ it is a \emph{vertical mutation}.

\begin{defn}
The \emph{horizontal mutation fan} of $X$ is defined to be the semi-invariant picture $L(H(X))$ for the algebra $H(X)$. The \emph{vertical mutation fan} of $X$ is defined to be $-L(V(X))$, i.e., the set of all $x\in\RR^n$ so that $-x\in L(V(X))$.
\end{defn}

Assume that $m$ is odd. (In the examples, $m=3$. The classical case is $m=1$.) The claim is that the compartment in $L(H(X))$ corresponding to $X$ is equal as a subset of $\RR^n$ to the compartment in $-L(V(X))$ corresponding to $X$. This is because the vertices (corners) of that compartment are the $g$-vectors of the $m$-cluster $T=\bigoplus T_i$. The sign reversal for $L(V(X))$ is due to the $g$-vectors having the ``wrong'' sign. This comes from the implicit sign convention: objects in $mod\text-V(X)$ are put in degree 0 when they are actually in odd degrees by definitions. When $m$ is even, all the signs should be reversed. So, the claim still holds.

We will do two examples with $m=3$. In this case, $H(X)=H_0(X)\times \color{blue}H_1(X)$, where we color the second factor blue, and $V(X)=V_0(X)\times V_1(X)\times V_2(X)$ for any $X$.

\subsection{Example: $A_2$}

Let $\Lambda$ be the algebra of type $A_2$ from Example \ref{eg: A2 example, part 1}. Figure \ref{A2 example 1 fig a} and the left part of Figure \ref{A2 example 1 fig b} show the semi-invariant picture for $\Lambda$. There are five horizontal algebras corresponding to the five torsion classes of $A_2$. In the same order as in Chart \eqref{eq: chart for A2} they are:
\begin{enumerate}
\item $A_2\times\, \color{blue}0$ our notation for $\Lambda\times \color{blue}0$
\item $S_2\times \color{blue}S_1$
\item $\  0\ \times \color{blue}A_2$
\item $S_1\times \color{blue}P_2$
\item $P_2\times \color{blue}S_2$ which is shorthand for $H$ so that $mod\text-H=\add\, P_2\times\color{blue} add\, S_2$.
\end{enumerate}
For example, in Case (5), $X_1$ must be either $P_2$ or $P_2[1]$ and $X_2$ must be $S_2[2]$ or $S_2[3]$. Since $m=3$ any object will lie in either $mod\text-\Lambda[0]\cup mod\text-\Lambda[1]$, in which case it will contribute to $H_0$ or it will lie in $mod\text-\Lambda[2]\cup mod\text-\Lambda[3]$, in which case it will contribute to $H_1$.

Figure \ref{fig A2 horizontal fans} shows the semi-invariant pictures for these five algebras. Cases (1) and (3) are of type $A_2$ so have 5 clusters each. The other three are semi-simple of type $A_1\times \color{blue}A_1$ so have 4 clusters. The total is $5\times 2+4\times 3=22$ $m$-clusters for $m=3$. The formula for the number of $m$-clusters of type $A_n$ is the ``Fuss-Catalan number'' \cite{FR}, \cite{enumerate}:
\[
	\prod_{e=1}^n \frac{m(n+1)+e+1}{e+1}=\frac1{m(n+1)+1}\binom{(m+1)(n+1)}{n+1}
\]
equal to $22$ for $(n,m)=(2,3)$ and $140$ for $(n,m)=(3,3)$.

For each horizontal fan in Figure \ref{fig A2 horizontal fans} the walls are ${\bf D}_H(X)$ where 
\[
	X\in mod\text-(H_0\times {\color{blue}H_1})=mod\text-H_0 \coprod {\color{blue}mod\text-H_1}.
\]
The wall ${\bf D}_H(X)$ is colored blue if $X\in mod\text-H_1$.

Each compartment in each horizontal fan has two labels. The letters: $a,b,c,a^+,b^+,c^+$, $1,2,3,4,5,6$ label the 12 vertical fans. For example, the shaded regions form vertical fan (2). The spot diagram indicates a subset of the following diagram:
\begin{center}
\begin{tikzpicture}[scale=1.8]
\coordinate (S1) at (0,0);
\coordinate (P2) at (0.5,.8);
\coordinate (S2) at (1,0);
\coordinate (S11) at (-1.5,.8);
\coordinate (S21) at (-.5,.8);
\coordinate (P21) at (-1,0);
\coordinate (S12) at (-3,0);
\coordinate (P22) at (-2.5,.8);
\coordinate (S22) at (-2,0);
\coordinate (L1) at (-2.1,1);
\coordinate (L2) at (-1.4,-.2);
\coordinate (L3) at (-.6,-.2);
\coordinate (L4) at (0.1,1);
\begin{scope}
	\draw[thick, color=red] (L1)--(L2) (L3)--(L4);
	\draw[thick] (S1) circle[radius=7pt] node{$S_1[2]$}; 
	\draw[thick] (P2) circle[radius=7pt]node{$P_2[2]$};
	\draw[thick] (S2) circle[radius=7pt]node{$S_2[2]$};
	\draw[thick] (S11) circle[radius=7pt] node{$S_1[1]$};
	\draw[thick] (P21) circle[radius=7pt]node{$P_2[1]$};
	\draw[thick] (S21) circle[radius=7pt]node{$S_2[1]$};	
	\draw[thick] (S12) circle[radius=7pt] node{$S_1[0]$};
	\draw[thick] (P22) circle[radius=7pt]node{$P_2[0]$};
	\draw[thick] (S22) circle[radius=7pt]node{$S_2[0]$};
\end{scope}
\end{tikzpicture}
\end{center}
Filled spots indicate that the object is in the aisle corresponding to the $m$-cluster of the compartment. Also all objects in $mod\text-\Lambda[k]$ for $k\ge 3$ lie in all aisles. And all aisles are disjoint from $mod\text-\Lambda[k]$ for all negative $k$. The green lines in Figure \ref{fig A2 horizontal fans} indicate the partial ordering of the shaded regions which are the compartments in the vertical fan $0\times A_2\times 0$ assembled in Figure \ref{vertical fan A2}.


%
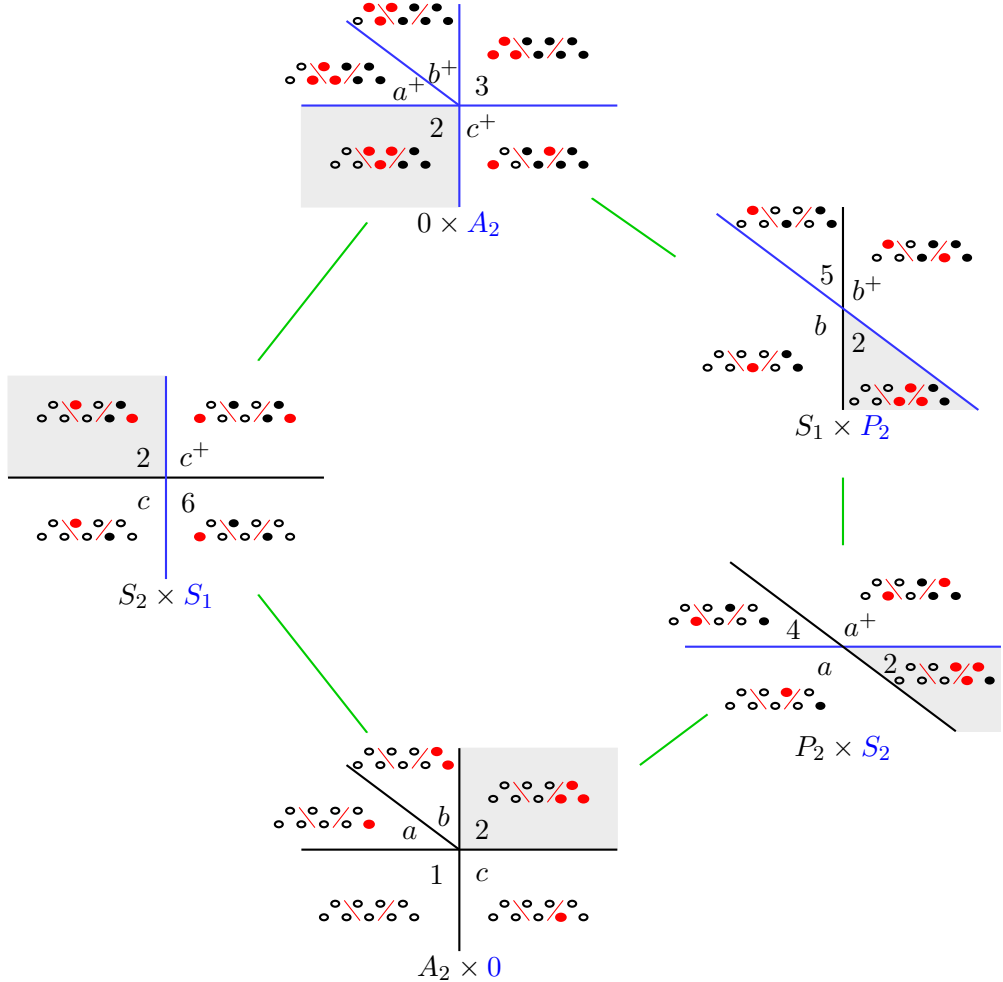
\begin{figure}[htbp] 
\begin{center}
\begin{tikzpicture}[xscale=.6, yscale=.45]
\draw[thick,color=green!80!black] (6,4)--(-1.5,11)--(-8,0)--(-1.5,-11);
\begin{scope}[xshift=-1.5cm, yshift=-11cm]
	\draw[fill,color=white] (0,0) ellipse [x radius=4cm, y radius=4cm];
	\AtwoLL{gray!15} 
\end{scope}
\begin{scope}[xshift=-1.5cm, yshift=11cm]
	\draw[fill,color=white] (0,0) ellipse [x radius=4cm, y radius=4cm];
	\AtwoUR{gray!15} 
\end{scope}
\begin{scope}[xshift=7cm, yshift=-5cm]
	\draw[fill,color=white] (0,0) ellipse [x radius=4cm, y radius=4cm];
	\AtwoLRb{gray!15} 
\end{scope}
\begin{scope}[xshift=7cm, yshift=5cm]
	\draw[fill,color=white] (0,0) ellipse [x radius=4cm, y radius=4cm];
	\AtwoLRc{gray!15} 
\end{scope}
\begin{scope}[xshift=-8cm, yshift=0cm]
	\draw[fill,color=white] (0,0) ellipse [x radius=4cm, y radius=4cm];
	\AtwoUL{gray!15} 
\end{scope}
\draw[thick,color=green!80!black] (7,-2)--(7,0) (2.5,-8.5)--(4,-7);
\end{tikzpicture}
\caption{These are 5 horizontal fans for $A_2: 1\leftarrow 2$ which contain the 22 $m$-clusters for $m=3$. The figure shows the corresponding bounded $t$-structures (with heart in red) which increase as we go northeast in each horizontal fan. The five shared regions form vertical fan (2). (See Figure \ref{vertical fan A2}).}
\label{fig A2 horizontal fans}
\end{center}
\end{figure}

$A_2:1\leftarrow 2$ has the following 12 vertical fans whose pieces are labeled in Figure \ref{fig A2 horizontal fans}. For example, vertical fan $(a)$ has two $m$-clusters which appear in horizontal fans $A_2\times\color{blue} 0$ and $P_2\times \color{blue}S_2$. But only one of the vertical fans is complete, namely $0\times A_2\times 0$, shown in Figure \ref{vertical fan A2}.
\begin{enumerate} 
\item $A_2\times \ 0\  \times \ 0\ $ with 1 $m$-cluster
\item $\ 0\ \times A_2 \times \ 0\ $ with 5 $m$-clusters
\item $\ 0\ \times \ 0\  \times A_2$ with 1 $m$-cluster
\item[$(a)$] $P_2\times\, S_2\times \ 0\ $ with 2 $m$-clusters
\item[$(b)$] $S_1\times\, P_2\times \ 0\ $ with 2 $m$-clusters
\item[$(c)$] $S_2\times\, S_1\times \ 0\ $ with 2 $m$-clusters
\item[$(a^+)$] $\ 0\ \times P_2\times S_2 $ with 2 $m$-clusters
\item[$(b^+)$] $\ 0\ \times S_1\times P_2$ with 2 $m$-clusters
\item[$(c^+)$] $\ 0\ \times S_2\times S_1$ with 2 $m$-clusters
\item $P_2\times \ 0\ \times S_2$ with 1 $m$-cluster
\item $S_1\times \ 0\ \times P_2 $ with 1 $m$-cluster
\item $S_2\times\ 0\ \times S_1 $ with 1 $m$-cluster
\end{enumerate}

\begin{figure}[htbp] 
\begin{center}
\begin{tikzpicture}[scale=.9]
\AtwoVert{gray!15}
\end{tikzpicture}
\caption{Vertical fan for $0\times A_2\times 0$. Indicated aisles are the union of torsion classes in $mod\text-\Lambda[1]$ with $mod\text-\Lambda[k]$ for all $k\ge2$. Objects in the heart are red.}
\label{vertical fan A2}
\end{center}
\end{figure}
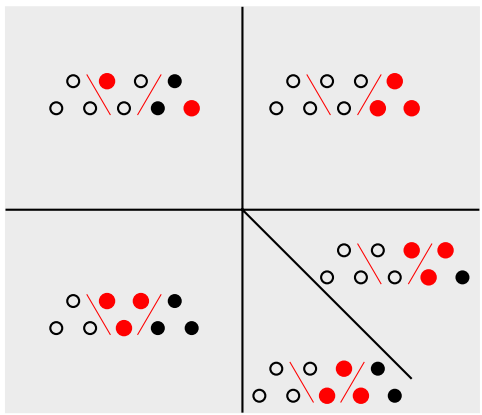

The $m$-maximal green sequence of maximal length is easy to visualize in Figures \ref{fig A2 horizontal fans} and \ref{vertical fan A2}: Start at the lower left in horizontal fan $A_2\times\color{blue}0$, go clockwise to (2) at the upper right. This is in the vertical fan $0\times A_2\times 0$ in Figure \ref{vertical fan A2}. Go clockwise around that fan to the lower left. This is in horizontal fan $0\times \color{blue}A_2$. In that fan, go clockwise to the maximum chamber in the upper right. This has 9 steps which add the 9 objects in the Auslander-Reiten quiver from right to left. The following example of an $m$-MGS was explained in the lecture by pointing to relevant parts of Figure \ref{fig A2 horizontal fans}.\vs2
\begin{center}
\begin{tikzpicture}[scale=.95]
\coordinate (B) at (0,1.8);
\coordinate (C) at (0,0.5);
\coordinate (Cslope) at (0.3,-0.45);
\coordinate (Vslope) at (0.3,-2.3);
\coordinate (V) at (0,-1.4);
\coordinate (M) at (1.7,0);
\begin{scope}
\draw (3.4,.5)  node{$\xrightarrow{\mu_2^+}$};
\draw (3.4,0) node{$H$};
\draw (1.7,-1.7) node{$(1)$};
	\draw (M) node{$\left[
	\begin{array}{rrr} 
	0 & -1 \\
	1 & 0 \\ 
	\hline
	 \color{black}1 & \color{green!66!black} 0 \\
	 \color{black}0 & \color{green!66!black} 1 \\
	\hline
	 \color{black}0 &\color{green!66!black} 0 \\
	\end{array}\right]$};
\end{scope}
\begin{scope}[xshift=3.5cm,yshift=0cm] 
	\draw (3.4,.5)  node{$\xrightarrow{\mu_2^+}$};
	\draw (3.4,0) node{$V$};
\draw (1.7,-1.7) node{$(a)$};
	\draw (1.7,0) node{$\left[
	\begin{array}{rrr} 
	0 & 1 \\
	-1 & 0 \\ 
	\hline
	\color{black}1 &\color{green!66!black}  0 \\
	\color{black}1 & \color{green!66!black} 1 \\
	\hline
	\color{black}0 &\color{green!66!black}  1 \\
	\end{array}\right]$};
\end{scope}
\begin{scope}[xshift=7cm,yshift=0cm] 
	\draw (3.4,.5)  node{$\xrightarrow{\mu_2^+}$};
	\draw (3.4,0) node{$H$};
\draw (1.7,-1.7) node{$(a)$};
	\draw (1.7,0) node{$\left[
	\begin{array}{rrr} 
	0 & -1 \\
	1 & 0 \\ 
	\hline
	\color{black}1 &\color{green!66!black}  0 \\
	\color{black}1 &\color{green!66!black}  1 \\
	\hline
	\color{black}0 &\color{green!66!black}  2 \\
	\end{array}\right]$};
\end{scope}
\begin{scope}[xshift=10.5cm,yshift=0cm] 
	\draw (3.4,.5)  node{$\xrightarrow{\mu_1^+}$};
	\draw (3.4,0) node{$H$};
\draw (1.7,-1.7) node{$(4)$};
	\draw (1.7,0) node{$\left[
	\begin{array}{rrr} 
	0 & 1 \\
	-1 & 0 \\ 
	\hline
	\color{green!66!black} 1 &\color{black} 0 \\
	\color{green!66!black} 1 & \color{black}1 \\
	\hline
	\color{green!66!black} 0 & \color{black}3 \\
	\end{array}\right]$};
\end{scope}
\begin{scope}[xshift=0cm,yshift=-4cm] 
\draw (3.4,.5)  node{$\xrightarrow{\mu_1^+}$};
\draw (3.4,0) node{$V$};
\draw (1.7,-1.7) node{$(a^+)$};
	\draw (1.7,0) node{$\left[
	\begin{array}{rrr} 
	0 & -1 \\
	1 & 0 \\ 
	\hline
	\color{green!66!black} 1 &  \color{black}0 \\
	\color{green!66!black} 1 & \color{black}1 \\
	\hline
	\color{green!66!black} 1 &\color{black}3 \\
	\end{array}\right]$};
\end{scope}
\begin{scope}[xshift=3.5cm,yshift=-4cm] 
\draw (3.4,.5)  node{$\xrightarrow{\mu_1^+}$};
\draw (3.4,0) node{$H$};
\draw (1.7,-1.7) node{$(a^+)$};
	\draw (1.7,0) node{$\left[
	\begin{array}{rrr} 
	0 & 1 \\
	-1 & 0 \\ 
	\hline
	\color{green!66!black} 1 &  \color{black}0 \\
	\color{green!66!black} 1 & \color{black}1 \\
	\hline
	\color{green!66!black} 2 &\color{black}3 \\
	\end{array}\right]$};
\end{scope}
\begin{scope}[xshift=7cm,yshift=-4cm] 
\draw (3.4,.5)  node{$\xrightarrow{\mu_1^+}$};
\draw (3.4,0) node{$H$};
\draw (1.7,-1.7) node{$(b^+)$};
	\draw (1.7,0) node{$\left[
	\begin{array}{rrr} 
	0 & -1 \\
	1 & 0 \\ 
	\hline
	\color{black}1 & \color{green!66!black}  1 \\
	\color{black}1 & \color{green!66!black} 0\\
	\hline
	\color{black}3 & \color{green!66!black}  2 \\
	\end{array}\right]$};
\end{scope}
\begin{scope}[xshift=10.5cm,yshift=-4cm] 
\draw (1.7,-1.7) node{$(3)$};
	\draw (1.7,0) node{$\left[
	\begin{array}{rrr} 
	0 & 1 \\
	-1 & 0 \\ 
	\hline
	\color{red}0 &  \color{red}1 \\
	\color{red}1 & \color{red}0\\
	\hline
	\color{red}3 & \color{red} 3 \\
	\end{array}\right]$};
\end{scope}
\end{tikzpicture}
\end{center}
\vs2
This starts at the bottom left compartment of the $A_2\times\color{blue}0$ horizontal fan. The first mutation is horizontal and goes to $(a)$ in the same horizontal fan. Then, we move southwest to compartment $(a)$ in $P_2\times\color{blue}S_2$. Then NE in that horizontal fan to $(a^+)$. Moving SW we get to $(a^+)$ in $0\times \color{blue}A_2$. Then we move NE in that horizontal fan to the maximal $t$-structure at $(3)$.

\subsection{Example}: 

$A_3$. Let $\Lambda=KQ$ where $Q$ is the quiver $1\leftarrow 2\rightarrow 3$. The poset of torsion classes is indicated in Figure \ref{A3: poset of torsion classes}. Each torsion class corresponds to a cluster for $\Lambda$.

The compartments of the vertical fan $0\times A_3\times 0$ which is shown in Figure \ref{A3: vertical fan 0 x A3 x 0} correspond to these torsion classes. But the $g$-vectors have the opposite sign because of the shift in degree.

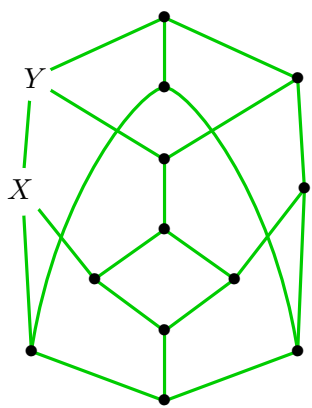
\begin{figure}[htbp] 
\begin{center}
\begin{tikzpicture}[scale=.3]
\coordinate (bottom) at (0,-3.2);
\coordinate (S1perp) at (-5.9,-1);
\coordinate (S3perp) at (5.9,-1);
\coordinate (S2perp) at (0,-.1);
\coordinate (I1perp) at (-3.1,2.2);
\coordinate (I3perp) at (3.1,2.2);
\coordinate (P2) at (0,4.4);
\coordinate (I3) at (-6.3,6.2);
\coordinate (I3right) at (-4.8,6.2);
\coordinate (I1left) at (4.8,6.2);
\coordinate (I1) at (6.2,6.2);
\coordinate (P2perp) at (0,7.5);
\coordinate (S2) at (0,10.7);
\coordinate (S2left) at (-1,10.7);
\coordinate (S2right) at (1,10.7);
\coordinate (P3) at (-5.9,11.1);
\coordinate (P1) at (5.9,11.1);
\coordinate (top) at (0,13.8);
\draw[very thick, color=green!80!black] (bottom)--(S1perp)--(I3)--(P3)--(top)--(S2);
\draw[very thick, color=green!80!black] (bottom)--(S3perp)--(I1)--(P1)--(top);
\draw[very thick, color=green!80!black] (bottom)--(S2perp)--(I1perp)--(I3) (I1perp)--(P2)--(P2perp)--(P3) (S2perp)--(I3perp)--(I1) (I3perp)--(P2) (P2perp)--(P1);
\draw[very thick, color=green!80!black] (S1perp) .. controls (I3right) and (S2left)..(S2);
\draw[very thick, color=green!80!black] (S3perp) .. controls (I1left) and (S2right)..(S2);
\draw (S2) node{$\bullet$};
\draw (S1perp) node{$\bullet$};
\draw (S2perp) node{$\bullet$};
\draw (S3perp) node{$\bullet$};
\draw (P1) node{$\bullet$};
\draw (P2) node{$\bullet$};
\draw (I1perp) node{$\bullet$};
\draw (I3perp) node{$\bullet$};
\draw (I1) node{$\bullet$};
\draw (P2perp) node{$\bullet$};
\draw (top) node{$\bullet$};
\draw (bottom) node{$\bullet$};
\begin{scope}[scale=.45,yshift=14cm,xshift=-14cm]
	\draw[fill,color=white] (0,-.5) circle [radius=2.3cm];
		\draw (-.2,-.2) node{$X$};
\end{scope}
\begin{scope}[scale=.45,yshift=25cm,xshift=-13cm]
	\draw[fill,color=white] (0,-.5) circle [radius=2cm];
		\draw[thick,color=black] (0.3,-.3) node{$Y$};
\end{scope}
\end{tikzpicture}
\caption{Poset of 14 torsion classes for $A_3: 1\leftarrow 2\to 3$.}
\label{A3: poset of torsion classes}
\end{center}
\end{figure}
%


%
\begin{figure}[htbp]
\begin{center}
\begin{tikzpicture}[scale=.9]
	\VerticalFan 
\end{tikzpicture}
\caption{The vertical fan $0\times A_3\times 0$ (the stereographic projection to $\RR^2$ of $-L(V(X))\cap S^2$). Each region corresponds to a cluster in the cluster category, e.g., $X$ corresponds to $I_3\oplus P_1[1]\oplus P_3$. But the figure shows $g$-vectors for the corresponding $m$-clusters which have the opposite sign, e.g, $g(I_3[1])=-g(I_3)$.}
\label{A3: vertical fan 0 x A3 x 0}
\end{center}
\end{figure}
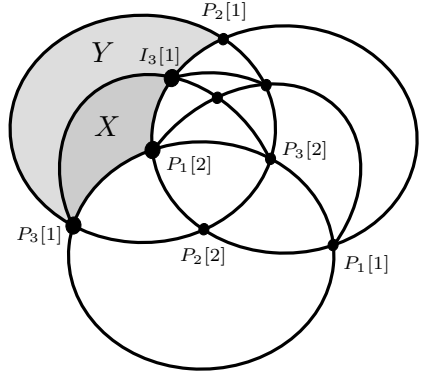

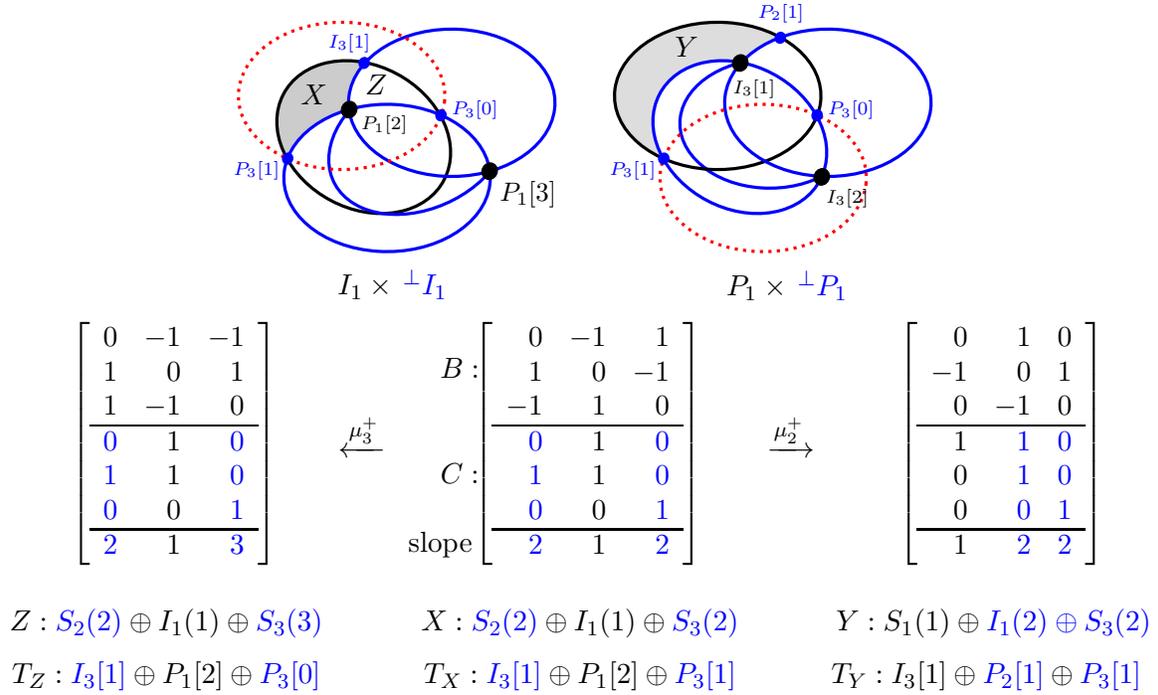
\begin{figure}[htbp]
\begin{center}
\begin{tikzpicture}
\begin{scope}[scale=.7]
\PoneBig 
\end{scope}
\begin{scope}[xshift=-5cm,scale=.7]
\IoneBig 
\end{scope}
\end{tikzpicture}
%
\begin{tikzpicture}
\coordinate (B) at (0,1.8);
\coordinate (C) at (0,0.5);
\coordinate (Cslope) at (0.3,-0.45);
\coordinate (Vslope) at (0.3,-2.3);
\coordinate (V) at (0,-1.4);
\coordinate (M) at (1.7,0);
\draw (9.9,1) node{$\xrightarrow{\mu_2^+}$};
\draw (4.2,1)  node{$\xleftarrow{\mu_3^+}$};
\begin{scope}[xshift=5.5cm,yshift=0cm]
	\draw (0,1.9) node{$B:$} ;
	\draw (0,0.5) node{$C:$} ;
	\draw (0.3,-0.45) node[left]{slope} ;
	\draw (1.6,-2.2) node{$T_X:{\color{blue} I_3[1]} \oplus P_1[2]\oplus {\color{blue}P_3[1]}$};
	\draw (1.6,-1.5) node{$X: {\color{blue} S_2(2)} \oplus I_1(1)\oplus {\color{blue}S_3(2)}$};
	\draw (1.7,.9) node{$\left[
	\begin{array}{rrr} 
	0 & -1 & 1\\
	1 & 0 & -1\\ 
	-1 & 1 & 0\\
	\hline
	 \color{blue}0 & 1 &\color{blue}0\\
	 \color{blue}1 &1 & \color{blue}0\\
	 \color{blue}0 &0 & \color{blue}1\\
	\hline
	 \color{blue}2 &1 & \color{blue}2\\
	\end{array}\right]$};
\end{scope}
\begin{scope}[xshift=11cm,yshift=0cm] 
	%
\draw (1.5,-2.2) node{$T_Y:{\color{black} I_3[1]} \oplus {\color{blue}P_2[1]}\oplus {\color{blue}P_3[1]}$};
\draw (1.6,-1.5) node{$Y:{S_1(1)} \oplus\color{blue}  I_1(2)\oplus {\color{blue}S_3(2)}$};
	\draw (1.7,.9) node{$\left[
	\begin{array}{rrr} 
	0 & 1 & 0\\
	-1 & 0 & 1\\ 
	0 & -1 &0\\
	\hline1 & \color{blue}1 & \color{blue}0\\
	0 & \color{blue}1 & \color{blue}0\\ 
	0 &\color{blue} 0 &\color{blue}1\\
	\hline
	1 & \color{blue}2 & \color{blue}2\\
	\end{array}\right]$};
\end{scope}
\begin{scope}
	\draw (1.6,-2.2) node{$T_Z:{\color{blue} I_3[1]} \oplus P_1[2]\oplus {\color{blue}P_3[0]}$};
	\draw (1.6,-1.5) node{$Z: {\color{blue} S_2(2)} \oplus I_1(1)\oplus {\color{blue}S_3(3)}$};
	\draw (1.7,.9) node{$\left[
	\begin{array}{rrr} 
	0 & -1 & -1\\
	1 & 0 & 1\\ 
	1 & -1 & 0\\
	\hline
	 \color{blue}0 & 1 &\color{blue}0\\
	 \color{blue}1 &1 & \color{blue}0\\
	 \color{blue}0 &0 & \color{blue}1\\
	\hline
	 \color{blue}2 &1 & \color{blue}3\\
	\end{array}\right]$};
\end{scope}
\end{tikzpicture}
\caption{At top: the horizontal fans $H(X)=H(Z): I_1\times\color{blue} \,^\perp I_1$ and $H(Y): P_1\times\color{blue} \,^\perp P_1$. $Y=\mu_2^+(X)$ is a vertical mutation. $Z=\mu_3(X)$ is a horizontal mutation. Matrix $B$ mutates in an unexpected way since the only change in $c$-vectors under $\mu_3^+$ is changing the sign of $c_3$.}
\label{A3: horizontal fans H(X), H(Y)}
\end{center}
\end{figure}
%


There are 55 vertical mutation fans. Only the one for $0\times A_3\times 0$ is shown.

Figure \ref{Fig:A3 Horizontal fans} shows all 14 horizontal fans for $A_3$. The regions corresponding to the $m$-cluster in the vertical fan $0\times A_3\times 0$ from Figure \ref{A3: vertical fan 0 x A3 x 0} are shaded and the green lines indicate the ordering of these shaded $m$-clusters. Thus, the horizontal fans are placed at the nodes of the Hasse diagram in Figure \ref{A3: poset of torsion classes}.

Figure \ref{preview} in the introduction shows the five horizontal fans on the left side of Figure \ref{Fig:A3 Horizontal fans}: $A_3\times \color{blue}0$, $P_3^\perp \times \color{blue}P_3$, $I_1\times \color{blue}^\perp I_1$, $P_1\times \color{blue}^\perp P_1$ and $0\times \color{blue}A_3$. One can visualize several $m$-maximal green sequences in the figure as follows. On the bottom floor $A_3\times \color{blue}0$, start in the unbounded region and go to the center. The longest such green path, of length 6, goes up to the center from below. Now, take the four stairs going up to the top floor $0\times \color{blue}A_3$. Equivalently, move in the vertical fan from the center out to the unbounded shaded region going through the shaded regions in Figure \ref{preview}. Then move to the center of the top floor. If we take the green stairs (dashed) in Figure \ref{preview}, this $m$-MGS has length 16. But the maximum number of steps is $3\times 6=18$. This is achieved by taking the longest path in the vertical fan from middle to outside.


%
\begin{figure}[htbp] 
\begin{center}
\begin{tikzpicture}
%
\coordinate (bottom) at (0,-3.2);
\coordinate (S1perp) at (-5.9,-1);
\coordinate (S3perp) at (5.9,-1);
\coordinate (S2perp) at (0,-.1);
\coordinate (I1perp) at (-3.1,2.2);
\coordinate (I3perp) at (3.1,2.2);
\coordinate (P2) at (0,4.4);
\coordinate (I3) at (-6.3,6.2);
\coordinate (I3right) at (-4.8,6.2);
\coordinate (I1left) at (4.8,6.2);
\coordinate (I1) at (6.2,6.2);
\coordinate (P2perp) at (0,7.5);
\coordinate (S2) at (0,10.7);
\coordinate (S2left) at (-1,10.7);
\coordinate (S2right) at (1,10.7);
\coordinate (P3) at (-5.9,11.1);
\coordinate (P1) at (5.9,11.1);
\coordinate (top) at (0,13.8);
\draw[very thick, color=green!80!black] (S1perp) .. controls (I3right) and (S2left)..(S2);
\draw[very thick, color=green!80!black] (S3perp) .. controls (I1left) and (S2right)..(S2);
\draw[very thick, color=green!80!black] (bottom)--(S1perp)--(I3)--(P3)--(top)--(S2);
\draw[very thick, color=green!80!black] (bottom)--(S3perp)--(I1)--(P1)--(top);
\draw[very thick, color=green!80!black] (bottom)--(S2perp)--(I1perp)--(I3) (I1perp)--(P2)--(P2perp)--(P3) (S2perp)--(I3perp)--(I1) (I3perp)--(P2) (P2perp)--(P1);
\begin{scope}[scale=.45,yshift=-7cm] 
	\draw[fill,color=white] (0,-.5) ellipse [x radius=3.3cm,y radius=2.8cm];
	\allblack
\end{scope}
\begin{scope}[scale=.45,xshift=13cm,yshift=-2cm]
	\draw[fill,color=white] (0,-.5) ellipse [x radius=3.3cm, y radius=2.8cm];
	\PonePerp
\end{scope}
\begin{scope}[scale=.45]
	\draw[fill,color=white] (0,-.5) ellipse [x radius=3.3cm, y radius=2.8cm];
	\StwoPerp
\end{scope}
\begin{scope}[scale=.45,xshift=-13cm,yshift=-2cm]
	\draw[fill,color=white] (0,-.5) ellipse [x radius=3.3cm, y radius=2.8cm];
	\PthreePerp
\end{scope}
\begin{scope}[scale=.45,yshift=5cm,xshift=7cm]
	\draw[fill,color=white] (0,-.5) ellipse [x radius=3.3cm, y radius=2.8cm];
	\IonePerp
\end{scope}
\begin{scope}[scale=.45,yshift=5cm,xshift=-7cm]
	\draw[fill,color=white] (0,-.5) ellipse [x radius=3.3cm, y radius=2.8cm];
	\IthreePerp
\end{scope}
\begin{scope}[scale=.45,yshift=14cm,xshift=14cm]
	\draw[fill,color=white] (0,-.5) ellipse [x radius=3.3cm, y radius=3cm];
	\Ithree
\end{scope}
\begin{scope}[scale=.45,yshift=10cm]
	\draw[fill,color=white] (0,-.5) ellipse [x radius=3.3cm, y radius=2.5cm];
	\Ptwo
\end{scope}
\begin{scope}[scale=.45,yshift=14cm,xshift=-14cm]
	\draw[fill,color=white] (0,-.5) ellipse [x radius=3.3cm, y radius=3cm];
	\Ione
		\draw[thick,color=black] (-1.8,2.3) node{$X$}(-1.7,1.8) --(-1.4,.5);
\end{scope}
\begin{scope}[scale=.45,yshift=17cm]
	\draw[fill,color=white] (0,-.5) ellipse [x radius=3.3cm, y radius=3cm];
	\PtwoPerp
\end{scope}
\begin{scope}[scale=.45,yshift=25cm,xshift=13cm]
	\draw[fill,color=white] (0,-.5) ellipse [x radius=3.3cm, y radius=3.3cm];
	\Pthree
\end{scope}
\begin{scope}[scale=.45,yshift=23.5cm]
	\draw[fill,color=white] (0,-.5) ellipse [x radius=3.3cm, y radius=2.4cm];
	\Stwo
\end{scope}
\begin{scope}[scale=.45,yshift=25cm,xshift=-13cm]
	\draw[fill,color=white] (0,-.5) ellipse [x radius=3.3cm, y radius=3.3cm];
	\Pone
		\draw[thick,color=black] (-1.8,2.5) node{$Y$}(-1.7,2) --(-1.2,1.1);
\end{scope}
\begin{scope}[scale=.45,yshift=30cm]
	\draw[fill,color=white] (0,-.5) ellipse [x radius=3.3cm, y radius=2.8cm];
	\allblue
\end{scope}
\end{tikzpicture}
\caption{The 14 horizontal mutation fans for the $A_3$ quiver $1\leftarrow 2\rightarrow 3$ with $m=3$. $H_0$ walls are black, $H_1$ walls are blue. The shaded regions are the 14 chambers of the vertical mutation fan for $0\times A_3\times 0$ which correspond to the torsion classes for $\Lambda=A_3$. The green lines show the partial ordering of these chambers (cut and pasted from Figure \ref{A3: poset of torsion classes}).}
\label{Fig:A3 Horizontal fans}
\end{center}
\end{figure}
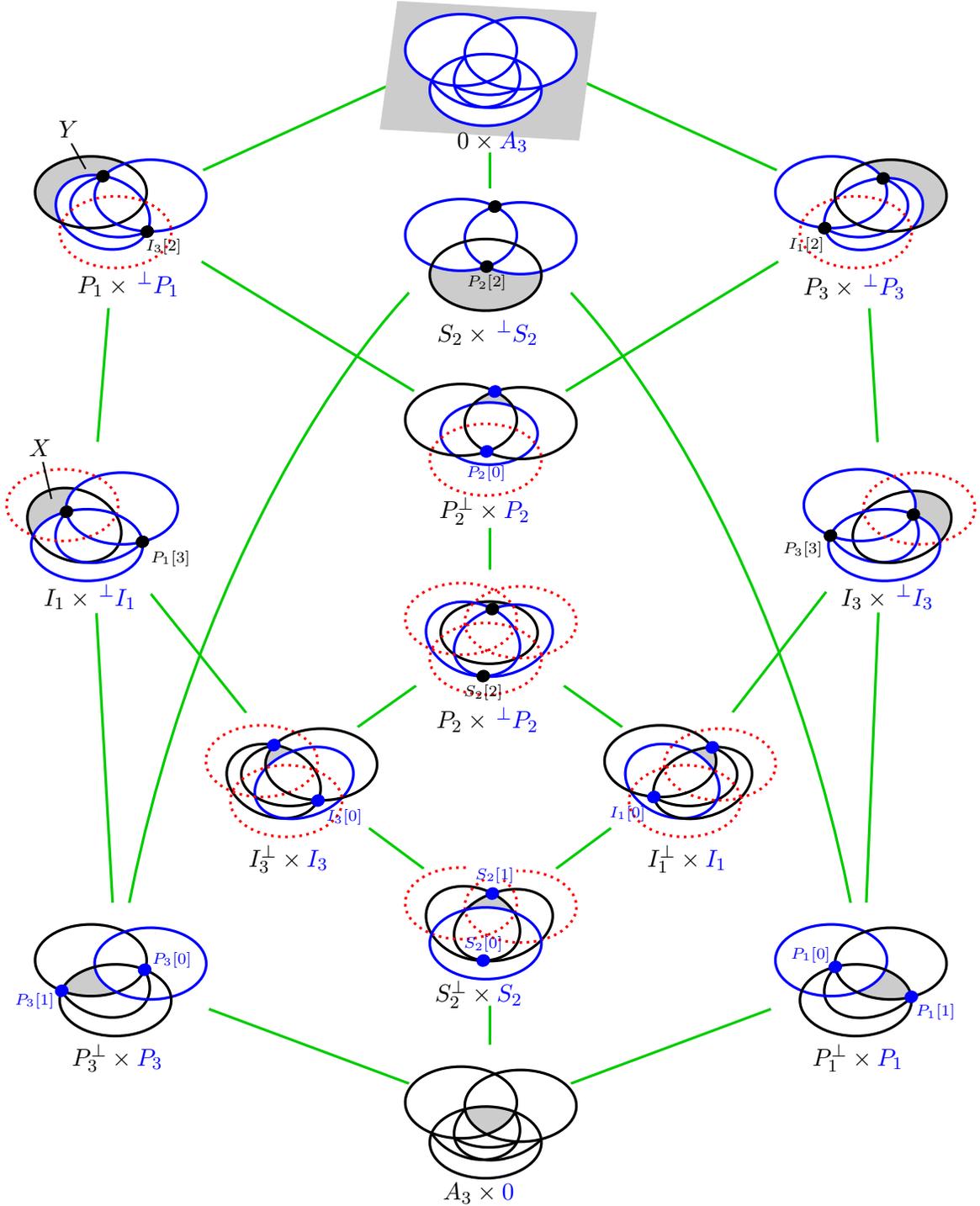
%
%


\section{Other papers}

I have several other papers which explain the ``picture'' point of view. At the request of the referee, here is a list of the papers and how they are related.
\[
\xymatrixrowsep{10pt}\xymatrixcolsep{25pt}
\xymatrix{
(2a) && (2d) & \bf(3c)\ar@{=>}[d] & (3e)\\
\bf(1) \ar@{=>}[r]\ar@{=>}[d]\ar@{=>}[u]&\bf(2b)\ar@{=>}[r]&\bf(2c)\ar@{=>}[r]\ar@{=>}[u] &\bf(3d)\ar@{=>}[r]\ar@{=>}[ru] &\text{these notes}\\
\bf(3a) \ar@{=>}[rrru]\ar@{=>}[rd]\ar@{=>}[r]\ar@{=>}[d]&\bf(4a) \ar@{=>}[r]\ar@{=>}[rd]& \bf(5b)\ar@{=>}[r]& \bf(5c)\ar@{=>}[ru]\\
(3b) &(5a)&  (4b) \ar@{=>}[r]& (4c)
	}
\]

\begin{enumerate}
\item (with G. Todorov, K. Orr, J. Weyman) ``Modulated semi-invariants''\cite{IOTW2} develops the basic cluster theory of hereditary algebras from the point of view of semi-invariant pictures.
\item Papers about the picture group.
\begin{enumerate}
	\item (with G. Todorov) ``Pictures groups and maximal green sequences''\cite{IT14}. This paper proves that, in finite type, maximal green sequences are in bijection with positive expressions for the Coxeter element of the picture group.
	\item (with G. Todorov) ``Signed exceptional sequences ...''\cite{IT13} defines the ``cluster morphism category'' and proves that its classifying space is a $K(G,1)$ for quivers of finite type where $G$ is the picture group of the quiver. A purely combinatorial version of the cluster morphism category, for type $A_n$, is given in \cite{Cat0}.
	\item (with G. Todorov and J. Weyman) ``Picture groups of finite type ...''\cite{ITW16} computes the cohomology of the picture group of type $A_n$ with any orientation.
	\item (reporting on work of Eric Hanson) ``Are finite type picture groups virtually special?''\cite{finite} generalizes pictures and picture groups to all finite dimensional algebras using recent results of \cite{BCZ}, \cite{BST}, \cite{Tr}, \cite{DIJ}.
\end{enumerate}
\item Pictures for tame quivers, also called ``propictures'', e.g., Figure \ref{FigA2tilde}.
\begin{enumerate}
	\item (with T. Br\"ustle, S. Hermes, G. Todorov) ``Semi-invariants pictures ...''\cite{BHIT} uses semi-invariant pictures to study MGSs and show that there are only finitely many MGSs for cluster-tilted algebras of tame type.
	\item (with S. Hermes) ``The no gap conjecture for tame hereditary algebras''\cite{HI} proves the ``no-gap conjecture'' for these algebras using \cite{BHIT}. This leads to an easy proof of the quantum dilogarithm identity for MGSs for tame quivers. (Theorem \ref{thm: quantum DT for tame}.)
	\item (with G. Todorov and J. Weyman) ``Periodic trees and semi-invariants''\cite{ITW14} shows that clusters of type $\tilde A_n$ can be represented by periodic trees.
	\item (with G. Todorov, M. Kim, J. Weyman) ``Periodic trees and propictures''\cite{IOTW3} defines ``propictures'' and the ``propicture groups'' which are inverse limits of picture groups. Figure \ref{FigA2tilde} above is an example. 
	\item (with M. Kim) ``Cluster propictures of type $\tilde A_n$''\cite{IK} extends definitions and theorems of \cite{IOTW3} to cluster-tilted algebras of type $\tilde A_n$.
\end{enumerate}
\item Papers on the ``linearity'' question for maximal green sequences. (See Section \ref{quantum}.)
\begin{enumerate}
	\item ``Linearity of stability conditions''\cite{PartI} gives many equivalent definitions of maximal green sequences (mostly well-known) for hereditary algebras using the corresponding Harder-Narasimhan filtration. (See Theorem \ref{thm: def of MGS} above.)
	\item ``Maximal green sequences for cluster-tilted algebras of finite type''\cite{PartII} extends the results of \cite{PartI} to cluster-tilted algebras of finite type and gives a conjectured formula for the maximum length of a MGS in these cases.
	\item (with PJ Apruzzese) ``Stability conditions for affine type $A$''\cite{AI} uses \cite{PartI}, \cite{PartII} to find the maximum length of a MGS for $\tilde A_{a,b}$ and determine which are linear.
\end{enumerate}
\item Papers about $m$-maximal green sequences:
\begin{enumerate}
\item[(a)] (with Y. Zhou) ``Tame hereditary algebras ...''\cite{IZ} gives a short module-theoretic proof that tame acyclic quivers have only finitely many $m$-maximal green sequences extending the theorem of \cite{BDP} to the $m$-cluster case.
\item[(b)] ``$m$-noncrossing trees,''\cite{next} gives the $m$-cluster version of ``cobinary trees'' \cite{IO} and introduces the mutation formula for $m$-clusters in terms of the extended exchange matrix with an additional row for ``slope'' which, in \cite{next}, is the actual slope of an edge in the ``$m$-noncrossing tree''.
\item[(c)] ``Enumerating $m$-clusters ...''\cite{enumerate} reinterprets Fomin and Reading \cite{FR} using an $m$-cluster version of signed exceptional sequences \cite{IT13}. The present lecture notes on ``horizontal and vertical fans'' are extracted from an early version of \cite{enumerate}.
\end{enumerate}
\end{enumerate}

\subsection{Quantum dilogarithm identities}\label{quantum}
Keller introduced maximal green sequences in \cite{Keller} to obtain formulas for quantum Donaldson-Thomas (DT) invariants for quivers. This was based on the earlier work of Reineke \cite{R} who obtained quantum DT-invariants of Dynkin quivers using linear stability conditions (Bridgeland \cite{B1}). Reineke conjectured \cite{R} that every Dynkin quiver admits a ``central charge'' (linear stability condition) making all roots stable. Yu Qiu \cite{Q} proved this for at least one orientation of every Dynkin quiver. My joint paper with PJ\cite{AI} answers the analogous question for quivers of the affine type $\tilde A_n$, namely: What is the length of the longest maximal green sequence and which of these are linear?

Linear stability conditions are given by straight lines in the semi-invariant picture. For example, $A, D$ and $F$ are linear and $B,C,E$ are also equivalent to linear maximal green sequences since there exist straight lines crossing the same walls in the same order.

Reineke \cite{R} showed that linear stability conditions gave quantum dilogarithm formulas for DT-invariants. Keller \cite{Keller} realized that (nonlinear) stability conditions, given by maximal green sequences, also gave the same DT-invariants. A MGS can also be viewed as a path in the space of Bridgeland stability conditions \cite{B2} which, by \cite{PartI}, is equivalent to a smooth ``green'' path in $\RR^n$ transverse to the semi-invariant picture.

The proof of the ``no-gap conjecture''\cite{HI} proves that acyclic tame quivers have well-defined DT-invariants. This is because the quantum dilogarithm $\mathbb E(M)=\mathbb E(y^{\undim M})$ satisfies the following square and pentagon identities (\cite[Thm 1.2]{Keller}).

\begin{lem}\label{quantum lemma}
Suppose $M,N$ are hom-orthogonal $KQ$-modules for an acyclic quiver $Q$ and $\Ext_{KQ}(N,M)=0$.
\begin{enumerate}
\item If $\Ext_{KQ}(M,N)=0$ then $\mathbb E(M),\mathbb E(N)$ commute.
\item If $\Ext_{KQ}(M,N)=K$ and $N\cof L\onto M$ is the nontrivial extension then
\[
	\mathbb E(N)\mathbb E(M)=\mathbb E(M)\mathbb E(L)\mathbb E(N).
\]
\end{enumerate}
\end{lem}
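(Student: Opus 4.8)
The plan is to reduce both identities to standard algebraic properties of the quantum dilogarithm in the quantum torus, after translating the module-theoretic hypotheses into a single numerical invariant: the antisymmetrized Euler form. I would work in the quantum torus $\mathbb{T}$ with generators $y^\alpha$, $\alpha\in\ZZ^n$, satisfying $y^\alpha y^\beta = q^{\lambda(\alpha,\beta)/2}\,y^{\alpha+\beta}$, where $\lambda(\alpha,\beta)=\langle\alpha,\beta\rangle-\langle\beta,\alpha\rangle$ is the skew-symmetrization of the Euler form $\langle\alpha,\beta\rangle=\alpha^tE\beta$ (this is the same antisymmetrization $E^t-E$ used to define $B_\Lambda$ earlier). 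The crucial structural point is that $\mathbb{E}(M)=\mathbb{E}(y^{\undim M})$ is a power series in the single generator $y^{\undim M}$, so all the content of each identity is carried by the commutation relation between $y^{\undim M}$ and $y^{\undim N}$, which is governed entirely by $\lambda(\undim M,\undim N)$.

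First I would compute $\lambda$ from the hypotheses. Since $M,N$ are hom-orthogonal, $\Hom(M,N)=\Hom(N,M)=0$, so for a hereditary algebra $\langle\undim M,\undim N\rangle=-\dim\Ext^1(M,N)$ and $\langle\undim N,\undim M\rangle=-\dim\Ext^1(N,M)$. The standing assumption $\Ext^1(N,M)=0$ then gives $\lambda(\undim N,\undim M)=\dim\Ext^1(M,N)$. In case (1) this is $0$, so the two generators genuinely commute; in case (2) it equals $1$. In case (2) the nontrivial extension $N\cof L\onto M$ moreover yields $\undim L=\undim M+\undim N$.

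With this in hand, case (1) is immediate: $\lambda(\undim M,\undim N)=0$ forces $y^{\undim M}y^{\undim N}=y^{\undim N}y^{\undim M}$, and any two power series in commuting variables commute, so $\mathbb{E}(M)$ and $\mathbb{E}(N)$ commute. For case (2) I would invoke the Faddeev--Kashaev pentagon identity $\mathbb{E}(y_1)\mathbb{E}(y_2)=\mathbb{E}(y_2)\mathbb{E}(y_1y_2)\mathbb{E}(y_1)$, valid whenever $y_1y_2=q\,y_2y_1$. Taking $y_1=y^{\undim N}$ and $y_2=y^{\undim M}$, the computation above shows exactly $y_1y_2=q\,y_2y_1$, and $y_1y_2$ is a scalar multiple of $y^{\undim L}$ since $\undim L=\undim M+\undim N$, so the middle factor is $\mathbb{E}(L)$. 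Reading the identity off gives $\mathbb{E}(N)\mathbb{E}(M)=\mathbb{E}(M)\mathbb{E}(L)\mathbb{E}(N)$, as claimed.

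The main obstacle is not any deep step but the consistent bookkeeping of sign and ordering conventions. I must pin down the normalization of $\lambda$, the orientation of the quantum-torus product, and the direction of the extension so that $\lambda(\undim N,\undim M)$ comes out $+1$ rather than $-1$, and so that the roles of $y_1,y_2$ are assigned to $N,M$ in precisely the order that makes the pentagon read left-to-right as stated (rather than its inverse, which would place $\mathbb{E}(L^{-1})$ or swap the outer factors). Matching the extension datum $\Ext^1(M,N)=K$ to the product $y_1y_2\sim y^{\undim L}$, with the correct $q$-power, is the only place requiring genuine care; the underlying commutation and pentagon identities for $\mathbb{E}$ are classical and may simply be cited (e.g.\ from Keller).
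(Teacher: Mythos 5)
Your proposal is correct and follows essentially the same route as the paper's proof: translate the hypotheses into the value of the antisymmetrized Euler form on $\undim M,\undim N$ (namely $0$ in case (1) and $1$ in case (2), using heredity and hom-orthogonality), dispose of case (1) by commutativity of the generators, and reduce case (2) to the Faddeev--Kashaev/Keller pentagon identity via $\undim L=\undim M+\undim N$ and the relation $y^{\undim N}y^{\undim M}=q\,y^{\undim M}y^{\undim N}$. The normalization point you flag (the middle factor being $\mathbb E(q^{-1/2}y_1y_2)=\mathbb E(y^{\undim L})$) is exactly the relation $y^{\alpha+\beta}=q^{1/2}y^\alpha y^\beta=q^{-1/2}y^\beta y^\alpha$ that the paper records, so your bookkeeping concern resolves in your favor.
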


\begin{proof} Let $\alpha=\undim M,\beta=\undim N$. In Case (1), $y^\alpha y^\beta=y^\beta y^\alpha$ which implies that $\mathbb E(y^\alpha)$, $\mathbb E(y^\beta)$ commute. Case (2) follows from the pentagon identity for $\mathbb E$ and the equation
\[
	y^{\alpha+\beta}=q^{\frac12 (\alpha,\beta)}y^\alpha y^\beta=q^{\frac12}y^\alpha y^\beta=q^{-\frac12}y^\beta y^\alpha 
\]
where $\alpha+\beta=\undim L$ and $(\alpha,\beta)=\alpha^t B\beta=\dim \Ext_{KQ}(M,N)=1$.
\end{proof}

\begin{thm}\label{thm: quantum DT for tame}
Tame acyclic quivers have well-defined quantum DT-invariants given by the product of all $\mathbb E(M_i)$ for $M_1,\cdots,M_r$ any maximal green sequence. 
\end{thm}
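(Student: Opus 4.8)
The plan is to show that the ordered product $\mathbb{E}(M_r)\cdots\mathbb{E}(M_1)$ is independent of the choice of maximal green sequence $M_1,\dots,M_r$; this independence is precisely the assertion that the quantum DT-invariant is well defined. By Theorem \ref{thm: def of MGS}(3), each maximal green sequence is realized by a generic green path $\gamma\colon\RR\to\RR^n$ which crosses the walls ${\bf D}_\Lambda(M_i)$ in increasing order of $i$, and the associated product is read off $\gamma$ in the order of crossing. Thus it suffices to prove that two generic green paths crossing the walls in different orders yield the same product.

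First I would fix two such paths $\gamma_0,\gamma_1$ and join them by a generic one-parameter family $\gamma_s$, keeping the endpoints deep in the negative and positive octants. For a generic family the combinatorial type of the crossing sequence is locally constant except at finitely many parameter values, where $\gamma_s$ passes through a codimension-two stratum of the picture, i.e. a transverse meeting of two walls ${\bf D}_\Lambda(M)$ and ${\bf D}_\Lambda(N)$. The local model is governed by the rank-two exceptional subcategory generated by the bricks $M,N$, and there are two finite-type cases. When that subcategory is semisimple, $M$ and $N$ are hom-orthogonal with $\Ext^1$ vanishing in both directions, the walls cross cleanly, and passing $\gamma_s$ through the crossing transposes the two consecutive factors $\mathbb{E}(M)$ and $\mathbb{E}(N)$; by Lemma \ref{quantum lemma}(1) these commute, so the product is unchanged. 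When the subcategory is of type $A_2$, the two walls together with the wall of the nontrivial extension $L$ bound a triangular region, and pushing $\gamma_s$ across the triple point replaces the two consecutive factors contributed by $M$ and $N$ by the three factors contributed by $M$, $L$ and $N$; this is exactly the pentagon identity of Lemma \ref{quantum lemma}(2), so again the product is preserved.

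The tame-specific difficulty is the remaining rank-two case, in which the subcategory is of Kronecker type $\tilde A_1$. Here infinitely many walls accumulate onto the domain of a null root, as in the red line of Figure \ref{FigA2tilde}, and no single square or pentagon move is available. The hard part is therefore to arrange the family $\gamma_s$ so that it avoids all such accumulation loci, leaving only finitely many finite-type moves. This is exactly the content of the structural results for tame pictures: by \cite{BHIT} the accumulation sets make a region of $\RR^n$ impassable to green paths, every maximal green sequence is finite and lies strictly on the finite side of these sets, and there are only finitely many of them. The no-gap theorem of \cite{HI} then supplies the required connectivity, showing that the finitely many maximal green sequences are joined to one another by generic families $\gamma_s$ that never enter the impassable region, so along each family only transverse double-wall crossings of finite type occur.

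Combining these steps, every pair of maximal green sequences is connected by a finite chain of square and pentagon moves, and Lemma \ref{quantum lemma} shows that each move leaves the product $\mathbb{E}(M_r)\cdots\mathbb{E}(M_1)$ invariant. Hence the product is independent of the chosen sequence and defines the quantum DT-invariant. I expect the genuine obstacle to be the Kronecker accumulation: all of the real work lies in citing and applying the tame structure of \cite{BHIT} and \cite{HI} to exclude paths that approach the null root, since once the homotopy is confined to the passable region the invariance is a purely local consequence of the quantum lemma.
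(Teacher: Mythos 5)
Your proposal is correct and takes essentially the same route as the paper: the paper's proof simply cites \cite[Sec 4]{HI} for the fact that any two maximal green sequences of a tame quiver differ by a sequence of square and pentagon (``polygonal'') deformations, and then applies Lemma \ref{quantum lemma} to see that each such move preserves the dilogarithm product. Your geometric homotopy discussion, including avoiding the Kronecker/null-root accumulation via \cite{BHIT} and \cite{HI}, is precisely the content of the result the paper cites, so the two arguments coincide in substance.
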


\begin{proof}
By \cite[Sec 4]{HI} any two MGSs for a tame quiver differ by a sequence of square and pentagon moves. Lemma \ref{quantum lemma} shows that such ``polygonal deformations'' of MGSs do not change the product of corresponding quantum dilogarithms.
\end{proof}

After my lecture at Nankai University, someone asked if there was an $m$-cluster analogue of the quantum dilogarithm identities. That is a very interesting question that I hope to answer in another paper.



\end{document}